\newtheorem{thm}{Theorem}[section]
\newtheorem{theorem}[thm]{Theorem}
\newtheorem{cor}[thm]{Corollary}
\newtheorem{lem}[thm]{Lemma} 
\newtheorem{ques}[thm]{Question} 
\newtheorem{prop}[thm]{Proposition}
\newtheorem{conj}[thm]{Conjecture} 
\newtheorem{defn}[thm]{Definition} 
\newtheorem{ex}[thm]{Example}
\def\D{{\cal D}}
\newcommand{\R}{{\rm R}}
\newcommand{\Z}{{\rm Z}}
\newcommand{\M}{{\cal M}}
\newcommand{\Q}{{\rm Q}}
\newcommand{\T}{{\rm T}}
\newcommand{\N}{\mathcal N}
\newcommand{\G}{\mathcal G}
\newcommand{\HH}{\mathcal H}
\newcommand{\X}{\mathfrak{X}}
\DeclareMathOperator{\supp}{supp}
\DeclareMathOperator{\egr}{egr}
\newcommand{\Fix}{{\rm Fix}}
\newcommand{\Per}{{\rm Per}}
\newcommand{\Homeo}{{\rm Homeo}}
\newcommand{\Diff}{{\rm Diff}}
\newcommand{\bl}{black}
\newcommand{\RP}{{\mathbb R \mathbb P}}
\def\ti{\tilde}
\def\sinfty{S_{\infty}}
\definecolor {darkgreen}{rgb}{0,0.6,0}
\title{\color{\bl}Distortion in Groups of Circle and Surface Diffeomorphisms}
\author{John Franks\thanks{Supported in part by NSF grant DMS0099640.}}
\date{\today}
\begin{document}
\maketitle
\section{Introduction}
In his seminal article \cite{Sm} S.~Smale outlined a program for 
the investigation of the properties of generic smooth dynamical systems.
He proposed as definition of the object of study the smooth
action of a non-compact Lie group $\G$ on a manifold $M$; i.e.,
a smooth function 
\[
f: \G \times M \to M
\]
satisfying $f(g_1, f(g_2, x)) = f(g_1g_2, x)$ 
and $f(e, x) = x$ for all
$x \in M$ and all $g_1, g_2 \in \G$, where $e$ is the identity
of $\G$.  Equivalently one can consider the homomorphism
\[
\phi: \G \to \Diff(M)
\]
from $\G$ to the group of diffeomorphisms of $M$ 
given by $\phi(g)(x) = f(g,x).$
The primary motivation, and by far the most studied case, has been
that where $\G$ is either the Lie group $\R$ of real numbers or
the discrete group $\Z$. As noted in the Introduction to this volume
this study grew out of an interest in solution of differential
equations where the group $\R$ or $\Z$ represents time 
(continuous or discrete).  

In this article we will focus on the far less investigated case where
$\G$ is a subgroup of Lie group of dimension greater than one.  The
continuous and discrete cases when $\G$ is $\R$ or $\Z$ share many
characteristics with each other and it is often clear how to formulate
(or even prove) an analogous result in one context based on a result
in the other.  Very similar techniques can be used in the two
contexts. However, when we move to more complicated groups the
difference between the actions of a connected Lie group and the
actions of a discrete subgroup become much more pronounced.  One must
start with new techniques in the investigation of actions of a
discrete subgroup of a Lie group.

As in the case of actions by $\R$ and $\Z$ one can impose additional
structures on $M$, such as a volume form or symplectic form, and
require that the group $\G$ preserve them.  For this article we
consider manifolds of dimension two where the notion of volume
form and symplectic form coincide.  As it happens many of the
results we will discuss are valid when a weaker structure, namely
a Borel probability measure, is preserved.

The main object of this article is to provide some context for, and an
exposition of, joint work of the author and Michael Handel which can
be found in \cite{FH3}.

The ultimate aim is the study of the (non)-existence of actions of
lattices in a large class of non-compact Lie groups on surfaces.  A
definitive analysis of the analogous question for actions on $S^1$ was
carried out by \'E.~Ghys in \cite{G}.  Our approach is topological and
insofar as possible we try to isolate properties of a group which
provide the tools necessary for our analysis.  The two key properties
we consider are almost simplicity of a group and the existence of a
distortion element.  Both are defined and described below.

We will be discussing groups of homeomorphisms and diffeomorphisms
of the circle $S^1$ and of a compact surface $S$ without boundary.  
We will denote the group of $C^1$ diffeomorphisms which preserve
orientation by $\Diff(X)$ where $X$ is $S^1$ or $S$.  Orientation
preserving homeomorphisms will be denoted by $\Homeo(X)$.  If $\mu$
is a Borel probability measure on $X$ then $\Diff_\mu(X)$
and $\Homeo_\mu(X)$ will denote the respective subgroups which preserve
$\mu.$ Finally for a surface $S$ we will denote by
$\Diff_\mu(S)_0$ the subgroup of $\Diff_\mu(S)$ of elements isotopic
to the identity.

An important motivating conjecture is the following.

\begin{conj}[R. Zimmer \cite{Z2}]
Any $C^\infty$ volume preserving action of  $SL(n,\Z)$ 
on a compact manifold with dimension less than $n$, factors
through an action of a finite group.
\end{conj}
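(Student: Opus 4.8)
Zimmer's conjecture is open in general, so the plan I describe establishes it only in the range of dimensions relevant to surfaces, following \'E.~Ghys \cite{G} for the circle and \cite{FH3} for surfaces. The plan is: when $\dim S = 1$, invoke \cite{G}; when $\dim S = 2$ --- which forces $n \ge 3$ --- prove that every homomorphism $\phi \colon SL(n,\Z) \to \Diff_\mu(S)$ with $S$ a closed oriented surface has finite image. Everything below is topological, uses only $C^1$ regularity, and asks of $\mu$ only that it be a Borel probability measure, so the $C^\infty$, volume-preserving form of the conjecture follows a fortiori. First I would exploit \emph{almost simplicity} of $SL(n,\Z)$: by the Margulis normal subgroup theorem $\ker\phi$ is finite or of finite index; in the finite-index case $\phi$ factors through a finite group and we are done, so assume $\ker\phi$ is finite, i.e.\ $\phi$ is faithful modulo its (finite) center. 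Since higher-rank lattices map to mapping class groups with finite image (Farb--Masur; alternatively, a distortion element cannot be pseudo-Anosov, by the curve-complex translation-length estimate, nor twist nontrivially along a multicurve, by undistortedness of Dehn twists), after passing to a finite-index subgroup and a finite cover of $S$ I may assume $\phi(SL(n,\Z)) \subset \Diff_\mu(S)_0$.

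The next step manufactures the distortion elements on which everything rests. For $n \ge 3$ the elementary matrices $E_{ij}(1) = I + e_{ij}$ are distortion elements of $SL(n,\Z)$: from commutator identities such as $E_{ij}(m) = [E_{ik}(m), E_{kj}(1)]$ together with manipulations inside $SL(n,\Z)$ that double off-diagonal entries, the word length of $E_{ij}(m)$ in a fixed finite generating set is $O(\log|m|)$ (Lubotzky--Mozes--Raghunathan). Hence every $\phi(E_{ij}(1))$ is a distortion element of $\Diff_\mu(S)_0$, and so is every nontrivial element in the image of an abelian subgroup such as $\langle E_{1j}(1) : 2 \le j \le n \rangle \cong \Z^{n-1}$, yielding a large commuting family of distortion diffeomorphisms.

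The heart of the matter --- and, in this exposition, the material imported from \cite{FH3} --- is to constrain the dynamics of a distortion element $f \in \Diff_\mu(S)_0$: its rotation vector (equivalently, its image under the flux homomorphism) must vanish, so that $\Fix(f^k) \ne \emptyset$ for a suitable $k$, and the fixed-point sets of a commuting family of distortion elements can be arranged to meet. I would prove this by converting the $O(\log m)$ word growth into the vanishing of a homogeneous quasimorphism --- the rotation number, the rotation vector, or a prime-end rotation number on a complementary disc --- via an averaging argument over the short words expressing $f^m$, and then extracting periodic points from Brouwer plane translation theory, Handel's fixed-point theorem, and, once more, the Thurston classification. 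Isolating which group-theoretic features this really needs --- almost simplicity, together with the bare existence of one distortion element --- is the organizing idea of the exposition.

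To finish, I would intersect the fixed-point sets of a generating set of distortion elements, keeping the intersection nonempty and $\phi(SL(n,\Z))$-invariant by a Lefschetz/index count, and so produce a point $p$ fixed by all of $\phi(SL(n,\Z))$; differentiating at $p$ yields a representation $D\phi_p \colon SL(n,\Z) \to GL(2,\R)$, which has finite image because for $n \ge 3$ the group $SL(n,\Z)$ admits no infinite-image linear representation of dimension less than $n$ (Margulis superrigidity); and since the action preserves $\mu$ and is tangent to finite order at $p$, a bootstrap through the $\Z^{n-1}$ of elliptic elements through $p$ upgrades this to finiteness of $\phi(SL(n,\Z))$, contradicting near-faithfulness. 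The hard part is establishing that a distortion element of $\Diff_\mu(S)$ has (stably) vanishing rotation vector and nonempty periodic point set: this is where genuinely two-dimensional surface dynamics enters, and it is precisely here that $\dim S = 2$ is used. No analogue is available in dimension $\ge 3$, which is why the full Zimmer conjecture --- needed already for $SL(4,\Z)$ acting on a $3$-manifold --- remains open.
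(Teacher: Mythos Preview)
The statement you were asked to prove is a \emph{conjecture} in the paper, not a theorem: the paper gives no proof of it in any dimension, and uses it only as motivation. You correctly flag this at the outset and restrict attention to dimensions $1$ and $2$. For the circle the paper cites Ghys and Witte; for surfaces the paper's own results (Theorem~\ref{thm:lattice} and the Heisenberg-subgroup theorem immediately after it) do establish the conjecture, so the right comparison is between your surface-case sketch and those proofs.

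Your route diverges from the paper's in two substantive ways. First, you aim for a \emph{single} common fixed point, obtained by intersecting fixed-point sets of commuting distortion elements and appealing to an unspecified ``Lefschetz/index count''. The paper instead proves the much stronger Theorem~\ref{thm:distort}: for a distortion element $f$ of $\Diff_\mu(S)_0$ (with the appropriate genus/periodic-point hypotheses) one has $\supp(\mu)\subset\Fix(f)$. This immediately produces not one fixed point but an entire common fixed set $\supp(\mu)$ for all distortion elements, and hence (via almost simplicity) for a finite-index subgroup $\G_0$. The mechanism behind Theorem~\ref{thm:distort} is not rotation numbers or flux alone but the trichotomy of Section~5 (exponential growth rate, linear displacement, spread), combined with the Atkinson-type recurrence argument of Section~6; your ``averaging over short words'' idea captures only the rotation-vector part.

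Second, your endgame --- differentiate at $p$, invoke superrigidity to get finite linear image, then ``bootstrap'' --- has a genuine gap. Finite derivative at a single point does not by itself force the action to be finite; this is precisely what the Thurston stability theorem is for, and the paper invokes it twice (once on the projectivized tangent space at a frontier point of $\supp(\mu)$ to kill $D\phi(g)_x$, once on $S$ itself), each time feeding the conclusion through Proposition~\ref{prop} (no homomorphism to $\R$). Your sketch never names Thurston stability, and the vague ``bootstrap through the $\Z^{n-1}$ of elliptic elements'' does not substitute for it. Superrigidity would give you that $D\phi_p$ has finite image, but you still need to pass from $D\phi(g)_p=I$ to $\phi(g)=\mathrm{id}$, and that step is exactly Thurston's theorem plus the no-map-to-$\R$ proposition.
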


This conjecture suggests a kind of exceptional rigidity of 
actions of  $SL(n,\Z)$ on manifolds of dimension less than
$n$.  The following result of D. Witte, which is a special case
of his results in \cite{W}, shows that in the case of $n=3$ and
actions on $S^1$ there is indeed a very strong rigidity.

\begin{theorem}[D. Witte \cite{W}]\label{thm:Witte}
Let $\G$ be a finite index subgroup of $SL(n,\Z)$ with
$n \ge 3.$
Any homomorphism 
\[
\phi: \G \to \Homeo(S^1)
\] has a finite
image.
\end{theorem}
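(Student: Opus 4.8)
My plan is to strip the action down by successive reductions until only an action on the real line remains, and then to bring in the internal structure of $SL(n,\Z)$. First I would make the standard reductions. Replacing $\G$ by the preimage under $\phi$ of the orientation-preserving subgroup of $\Homeo(S^1)$ changes $\G$ by a subgroup of index at most two, still of finite index in $SL(n,\Z)$, and it is enough to bound its image; so assume $\phi$ takes orientation-preserving values and $\G$ is a finite-index subgroup of $SL(n,\Z)$, hence an irreducible lattice in $SL(n,\R)$ with $n\ge 3$. By the Margulis normal subgroup theorem $\ker\phi$ is either of finite index --- whence $\phi(\G)$ is finite and we are done --- or finite and central. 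In the latter case pass to a torsion-free finite-index subgroup $\G_0\le\G$ (these exist since $SL(n,\Z)$ is virtually torsion-free); then $\phi|_{\G_0}$ is faithful, since its kernel, normal in $\G_0$, is either of finite index (leaving us done again) or finite and hence central in the torsion-free group $\G_0$, i.e.\ trivial. As $\G_0$ is infinite, it now suffices to contradict the existence of a faithful homomorphism $\G_0\to\Homeo(S^1)$.

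Next I would pass from the circle to the line. By Ghys's theorem on actions of higher-rank lattices on the circle \cite{G} --- which rests on the vanishing of the degree-two real bounded cohomology of such lattices --- the action $\phi|_{\G_0}$ either has a finite orbit or is semiconjugate to an action by rotations. In the first case a finite-index subgroup fixes a point $p\in S^1$. In the second, the rotation action is a homomorphism $\G_0\to\R/\Z$ with finite image, because $\G_0$ has Kazhdan's property (T) and hence finite abelianization; on the kernel $\G_1$ of that homomorphism $\phi|_{\G_1}$ is semiconjugate to the trivial action, so either $\phi(\G_1)$ is trivial (contradicting faithfulness) or $\phi(\G_1)$ fixes an endpoint $p$ of an interval collapsed by the semiconjugacy. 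Either way $\phi$ restricted to a finite-index subgroup takes values in $\Homeo(S^1\setminus\{p\})=\Homeo(\R)$, and the problem becomes: a torsion-free finite-index subgroup of $SL(n,\Z)$, $n\ge 3$, cannot act faithfully on $\R$ by orientation-preserving homeomorphisms --- equivalently, no such subgroup is left-orderable.

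Finally, the line. Call the relevant subgroup $\G_1$ again and fix $N$ with $E_{ij}^N\in\G_1$ for all $i\ne j$. For a third index $\ell\ne i,j$ --- this is where $n\ge 3$ enters --- the group $\langle E_{i\ell}^N,E_{\ell j}^N\rangle\le\G_1$ is a two-step nilpotent (``Heisenberg'') group whose center is generated by the commutator $[E_{i\ell}^N,E_{\ell j}^N]=E_{ij}^{N^2}$. A finitely generated nilpotent group of orientation-preserving homeomorphisms of $\R$ preserves a Radon measure (Plante), and with respect to such a measure $\mu$ the map sending $g$ to the signed $\mu$-mass of the interval from $x_0$ to $g x_0$ is a homomorphism $\to(\R,+)$; it vanishes on commutators, and a vanishing value forces a fixed point. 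Hence $E_{ij}^{N^2}$, and therefore the homeomorphism $E_{ij}^N$ itself, has a fixed point, so every root element acts with nonempty fixed-point set.

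The remaining step --- upgrading this local information into a global contradiction --- is, I expect, the main obstacle, and it is exactly where the arithmeticity of $\G$ is indispensable. The idea is to combine the Steinberg relations among the $E_{ij}$ (in particular $E_{ij}^{k^2}=[E_{i\ell}^k,E_{\ell j}^k]$, which already forces $|E_{ij}^k|_S=O((\log k)^2)$, so that each $E_{ij}$ is a distortion element), the transitivity of the Weyl group on root subgroups, and the theorem of Carter--Keller that $SL(n,\Z)$, $n\ge 3$, is boundedly generated by its one-parameter unipotent subgroups, in order to show that the fixed-point sets of the various root subgroups must all meet; this produces a global fixed point for a finite-index subgroup of $\G_1$, contradicting faithfulness. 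The point is that a generic left-orderable group carries no such rigid web of relations, so the contradiction genuinely detects the arithmetic structure; making the geometry of these fixed-point sets interlock is the technical heart of the matter, and is essentially Witte's argument.
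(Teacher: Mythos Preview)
Your proposal is incomplete at exactly the point you flag: you never carry out the step that turns ``each root element has a fixed point'' into a contradiction. You gesture at Carter--Keller bounded generation and at making fixed-point sets ``interlock,'' but you do not give an argument, and bounded generation by itself does not obviously produce a common fixed point from individual ones. Since you yourself identify this as ``the main obstacle'' and then defer to ``essentially Witte's argument,'' the proof as written has a genuine gap.

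The paper's proof is both more elementary and substantially different in architecture. It stays on $S^1$ throughout (no reduction to $\R$), invokes no property~(T), no Ghys, and no bounded generation. For $n=3$ it takes the six elementary matrices $a_1,\dots,a_6$ (indices mod $6$) with relations $[a_i,a_{i+1}]=e$ and $[a_{i-1},a_{i+1}]=a_i^{\pm k}$. Amenability of the nilpotent group $\langle g_{i-1},g_{i+1}\rangle$ yields an invariant measure, so rotation number is a homomorphism and the commutator $g_i^k$ has a fixed point. The heart of the argument --- precisely what you are missing --- is then a short combinatorial claim: if $U_1$ is a component of $S^1\setminus\Fix(g_1^k)$, there is a component $U_2$ of the complement of $\Fix(g_2^k)$ or $\Fix(g_6^k)$ with $U_1\subsetneq U_2$. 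Iterating around the hexagon gives $U_1\subsetneq U_2\subsetneq\cdots\subsetneq U_6\subsetneq U_1$, a contradiction; hence each $a_i^k\in\ker\phi$ and Margulis finishes. The claim itself is proved by a brief argument with the quotient circle $U_1/g_1^k$ and the relation $[g_6^k,g_2^k]=g_1^{\pm k^2}$.

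One further remark: invoking Ghys's theorem \cite{G} to obtain a finite orbit is logically permissible but methodologically backwards --- Ghys's result is a later, much deeper generalization of the very theorem you are trying to prove, and the paper explicitly presents Witte's theorem as a precursor to it. The paper's proof avoids this by getting the fixed points directly from the Heisenberg subgroups, with no appeal to bounded cohomology.
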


\begin{proof}
We first consider the case $n=3.$
If $G$ has finite index in $SL(3, \Z)$ then there is $k > 0$ such that
\[
a_1 = \begin{pmatrix}
1 & k & 0\\
0 & 1 & 0\\
0 & 0 & 1\\
\end{pmatrix},
a_2 =
\begin{pmatrix}
1 & 0 & k\\
0 & 1 & 0\\
0 & 0 & 1\\
\end{pmatrix},
a_3 =
\begin{pmatrix}
1 & 0 & 0\\
0 & 1 & k\\
0 & 0 & 1\\
\end{pmatrix},
\]
\[
a_4 = \begin{pmatrix}
1 & 0 & 0\\
k & 1 & 0\\
0 & 0 & 1\\
\end{pmatrix},
a_5 = \begin{pmatrix}
1 & 0 & 0\\
0 & 1 & 0\\
k & 0 & 1\\
\end{pmatrix},
\text{ and }
a_6 = \begin{pmatrix}
1 & 0 & 0\\
0 & 1 & 0\\
0 & k & 1\\
\end{pmatrix},
\]
are all in $\G.$  We will show that each of the $a_i^k$ is in the kernel of $\phi.$
A result of Margulis (see Theorem~\ref{thm:margulis} below) then implies that the
kernel of $\phi$ has finite index.  This result also implies that the case
$n=3$ is sufficient to prove the general result.

A straightforward computation shows that
$[a_i, a_{i+1}] = e$ and $[a_{i-1}, a_{i+1}] = a_i^{\pm k},$ where the 
subscripts are taken modulo $6$.
Indeed $[a_{i-1}^m, a_{i+1}^n] = a_i^{\pm mnk}.$

Let $g_i = \phi(a_i)$.  The group $\HH$ generated by $g_1$ and $g_3$
is nilpotent and contains $g_2^k$ in its center.  Since nilpotent
groups are amenable there is an invariant measure for the group $\HH$
and hence the rotation number $\rho : \HH \to \R/\Z$ is a
homomorphism.  Since $g_2^k$ is a commutator, it follows that $g_2^k$
has zero rotation number and hence it has a fixed point. A similar
argument shows that for all $i,\ g_i^k$ has a fixed point.

We will assume that one of the $g_i^k,$ for definiteness say
$g_1^k,$ is not the identity and show this leads to a contradiction.

Let $U_1$ be any component of $S^1 \setminus \Fix(g_1^k)$.  Then we
claim that there is a $U_2 \subset S^1$ which properly contains $U_1$
and such that $U_2$ is either a component of $S^1 \setminus
\Fix(g_{6}^k)$ or a component of $S^1 \setminus \Fix(g_{2}^k)$. We
postpone the proof of the claim and complete the proof.

Assuming the claim suppose that $U_2$ is a component of $S^1 \setminus
\Fix(g_2^k)$ the other case being similar.  Then again applying the
claim, this time to $g_2^k$ we see there is $U_3$ which properly
contains $U_2$ and must a component of $S^1 \setminus \Fix(g_3^k)$
since otherwise $U_1$ would properly contain itself.  But repeating
this we obtain proper inclusions
\[
U_1 \subset U_2 \dots U_5 \subset U_6 \subset U_1,
\]
which is a contradiction.  Hence $g_1^k = id$ which implies that
$a_1^k \in Ker(\phi).$  A further application of the result of 
Margulis (Theorem~\ref{thm:margulis} below) implies that 
$Ker(\phi)$ has finite index in $\G$ and hence that $\phi(\G)$ is
finite.

To prove the claim we note that $U_1$ is an interval whose endpoints
are fixed by $g_1^k$  and we will will first prove that it is
impossible for these endpoints also to be fixed by $g_6^k$ and
$g_2^k$.  This is because in this case we consider the action induced
by the two  homeomorphisms $\{g_{6}^k, g_{2}^k\}$
on the circle obtained by quotienting $U_1$ by $g_1^k$.  These two circle
homeomorphisms commute because $[g_{6}^k, g_{2}^k] = g_1^{\pm k^2}$ on
$\R$ so passing to the quotient where $g_1$ acts as the identity
we obtain a trivial commutator.
It is an easy exercise to see that if two degree one  homeomorphisms
of the circle, $f$ and $g$,  commute then any two lifts
to the universal cover  must also commute. (E.g. show that
$[\ti f, \ti g]^n$ is uniformly bounded independent of $n$.)
But this is impossible in our case because  the universal cover
is just $U_1$ and  $[g_{6}^k, g_{2}^k] = g_1^{\pm k^2} \ne id.$

To finish the proof of the claim we note that if $U_1$ contains a point 
$b \in \Fix( g_2^k)$ then $g_1^{nk}(b) \in \Fix( g_2^k)$ for all $n$ and hence
\[
\lim_{n \to \infty}g_i^{nk}(b)\text{ and } \lim_{n \to -\infty}g_i^{nk}(b),
\]
which are the two endpoints of $U_1$ must be fixed by $g_2^k$.  A similar argument
applies to $g_6^k.$  

It follows that at least one of $g_6^k$  and $g_2^k$  has no fixed points in $U_1$ and
does not fix both endpoints.  I.e. there is $U_2$ as claimed.
\end{proof}

It is natural to ask the analogous question for surfaces.

\begin{ex}
The group $SL(3,\Z)$ acts smoothly on $S^2$ by projectivizing the standard
action on $\R^3.$
\end{ex}

Consider $S^2$ as the set of unit vectors in $\R^3.$ If $x \in S^2$ and
$g \in SL(3,\Z),$
we can define $\phi(g) : S^2 \to S^2$ by
\[
\phi(g)(x) = \frac{gx}{|gx|}.
\]

\begin{ques}
Can the group $SL(3,\Z)$ act continuously or smoothly on a surface of genus at least one?
Can the group $SL(4,\Z)$ act continuously or smoothly on $S^2$?
\end{ques}

\section{Distortion in Groups}

A key concept in our analysis of groups of surface homeomorphisms
is the following.

\begin{defn}
An element
$g$ in a finitely generated group $G$ is called
{\color{\bl}\em distorted} if it has infinite order and
\[
{
\liminf_{n \to \infty} \frac{|g^n|}{n} = 0,
}
\]
where $|g|$ denotes the minimal word length of $g$ in some set of
generators.  If $\G$ is not finitely generated then
$g$ is distorted if it is distorted in some
finitely generated subgroup.
\end{defn}

It is not difficult to show that if $\G$ is finitely generated then the property of being
a distortion element is independent of the choice of generating set.

\begin{ex}
The subgroup $G$ of $SL(2,\R)$ generated by 
\[
A =
\begin{pmatrix}
1/2 &  0\\
0 & 2 \\
\end{pmatrix}
\text{ and }
B =
\begin{pmatrix}
1 &  1\\
0 & 1 \\
\end{pmatrix}
\]
satisfies 
\[
A^{-1}BA =
\begin{pmatrix}
1 &  4\\
0 & 1 \\
\end{pmatrix}
= B^4 \text{ and }
A^{-n}BA^n = B^{4^n}
\]
so $B$ is distorted.
\end{ex}

\begin{ex}
The group of integer matrices of the form
\[
\begin{pmatrix}
1 & a & b\\
0 & 1 & c\\
0 & 0 & 1\\
\end{pmatrix}
\]
is called the {\color{\bl}\em Heisenberg group}.
\end{ex}

If
\[
g =
\begin{pmatrix}
1 & 1 & 0\\
0 & 1 & 0\\
0 & 0 & 1\\
\end{pmatrix}
\text{ and }
h =
\begin{pmatrix}
1 & 0 & 0\\
0 & 1 & 1\\
0 & 0 & 1\\
\end{pmatrix}
\]
then their {\em commutator} $f = [g,h] := g^{-1}h^{-1}gh$ is
\[
f = 
\begin{pmatrix}
1 & 0 & 1\\
0 & 1 & 0\\
0 & 0 & 1\\
\end{pmatrix}
\text{ and }
{\color{\bl}f \text{ commutes with } g \text{ and } h.}
\]
This implies
\[
{\color{\bl}[g^n, h^n] = f^{n^2}}
\]
 so $f$ is distorted.

Let $\omega$ denote Lebesgue measure on the torus $\T^2.$.

\begin{ex}[G. Mess \cite{M}]
In the subgroup of $\Diff_\omega(\T^2)$ generated by the automorphism
given by 
\[
A = \begin{pmatrix}
2 & 1\\
1 & 1\\
\end{pmatrix}
\]
and a translation $T(x) = x + w$ where $w \ne 0$ is parallel to the 
unstable manifold of $A$, the element $T$ is distorted.
\end{ex}

\begin{proof}
Let $\lambda$ be the expanding eigenvalue of $A$.
The element 
$h_n = A^n T A^{-n}$ satisfies $h_n(x) = x + \lambda^n w$
and $g_n = A^{-n} T A^n$ satisfies $g_n(x) = x + \lambda^{-n} w$.
Hence $g_n h_n(x) = x +  (\lambda^n + \lambda^{-n}) w.$
Since $tr A^n = \lambda^n + \lambda^{-n}$ is an integer we conclude
\[
T^{tr A^n} = g_n h_n, \text{ so } |T^{tr A^n}| \le  4n +2.
\]
But
\[
\lim_{n \to \infty} \frac{n}{tr A^n} = 0,
\]
so $T$ is distorted.
\end{proof}

\begin{ques}
Is an irrational rotation of $S^1$ distorted in $\Diff(S^1)$ or
$\Homeo(S^1)?$  Is an irrational rotation of $S^2$ distorted in
$\Diff(S^2)$ or in the group of area preserving diffeomorphisms of $S^2?$ 
\end{ques}

\begin{ex}[D. Calegari \cite{C}]
There is a $C^0$ action of the Heisenberg group on $S^2$ 
whose center is generated by an irrational
rotation.  Hence an irrational rotation of $S^2$ is distorted
in $\Homeo(S^2).$
\end{ex}

\begin{proof}
Consider the homeomorphisms of $\R^2$ given by 
\[
G = \begin{pmatrix}
1 & 1\\
0 & 1\\
\end{pmatrix}
\]
and a translation $H(x,y) = (x ,y +1)$.  We compute
$F = [G,H]$ to be a translation $F(x,y) = (x+1, y).$
This defines an action of the Heisenberg group on $\R^2$.
Let $C$ be the cylinder obtained by quotienting by the 
relation $(x ,y ) \sim (x + \alpha, y)$ for some 
$\alpha \in \R \setminus \Q$. The quotient
action is well defined.  The two ends of $C$ are fixed by every element
of the action and hence if we compactify $C$ to obtain
$S^2$ by adding a point at each end, we obtain an action
of the Heisenberg group on $S^2.$ 
\end{proof}

A theorem of Lubotzky, Mozes, and Raghunathan shows that there
is a large class of non-uniform lattices which contain a distortion
element.

\color{\bl}\begin{theorem}[Lubotzky-Mozes-Raghunathan \cite{lmr}]\label{thm:lmr}
Suppose $\Gamma$ is a non-uniform irreducible lattice in a semi-simple 
Lie group $\G$ with $\R-$rank $\ge 2.$  Suppose further that $\G$ is
connected, with finite center and no nontrivial compact factors.
Then $\Gamma$ has distortion elements, in fact, elements whose 
word length growth is at most logarithmic.
\end{theorem}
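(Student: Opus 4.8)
The plan is to reduce, using arithmeticity, to a computation of Baumslag--Solitar type modelled on G.~Mess's example above, and then to promote the bound it gives along a subsequence to a uniform logarithmic bound by means of a numeration system. Since logarithmic word-length growth of powers is invariant under passage between commensurable finitely generated groups --- on a common finite-index subgroup the word metrics are Lipschitz equivalent, and one may replace the element by a power lying in that subgroup --- Margulis's arithmeticity theorem (which applies because $\Gamma$ is irreducible of $\R$-rank $\ge 2$ and $\G$ has the stated form) lets us assume $\Gamma = \mathbf G(\mathcal O)$ for a connected semisimple group $\mathbf G$ over a number field $k$, with $\mathcal O$ the ring of $S$-integers of $k$ for a suitable finite set of places $S$. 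Non-uniformity then forces, via Godement's compactness criterion, that $\mathbf G$ is $k$-isotropic --- equivalently, that $\Gamma$ contains nontrivial unipotents; fix a proper parabolic $k$-subgroup, let $\mathbf U$ be its unipotent radical, and note that $\Lambda := \Gamma \cap \mathbf U(k)$ is an infinite, finitely generated, torsion-free nilpotent lattice in $\mathbf U$.

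The structural heart of the construction is to locate inside $\Gamma$ a ``Mess configuration'': a semisimple element $a$ normalizing a unipotent subgroup, together with an infinite-order element $u$ of that subgroup, so that conjugation by $a$ scales $u$ by a unit $\lambda$ with $|\lambda|_v > 1$ at some place $v$, with $u$ lying in no proper invariant subspace. When the $S$-unit group of $k$ is infinite one takes $a$ in a maximal $k$-split torus and $u$ in an abelian highest-root subgroup, obtaining $a u a^{-1} = u^{\lambda}$. When it is finite --- as for $\Gamma = SL(3,\Z)$, where the integral diagonal matrices are torsion --- one uses instead an $\R$-isotropic $k$-torus with infinite unit group that normalizes a root group: concretely a copy of $SL(2,\Z) \ltimes \Z^2$ containing a hyperbolic element $M$ of $SL(2,\Z)$, so that the identities $M^n + M^{-n} = \operatorname{tr}(M^n)\, I$ are available exactly as in Mess's example. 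The existence of such a torus in the second regime, which rests on $\mathbf G$ having positive $k$-rank together with $\R$-rank at least two, is the first genuinely technical point.

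Given a Mess configuration, Mess's computation --- or its split-torus analogue $u^{\lambda^n} = a^n u a^{-n}$ --- yields identities $u^{s_n} = w_n(a,u)$, where $s_n$ is an integer with $|s_n| \asymp |\lambda|^n$ (for instance $s_n = \operatorname{tr}(M^n)$ in the second regime) and $w_n$ is a word of length $O(n)$ in $a^{\pm 1}$ and $u^{\pm 1}$; when the ambient unipotent is non-abelian the commutator corrections are themselves $a$-invariant and are absorbed by working down the lower central series of $\Lambda$. This already gives $|u^{s_n}|_\Gamma = O(n) = O(\log s_n)$, hence $\liminf_m |u^m|/m = 0$, so $u$ is a distortion element.

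Upgrading this to $|u^m|_\Gamma = O(\log m)$ for \emph{every} integer $m$ is the main obstacle. If $\lambda$ can be chosen to be a rational prime, then $\langle a, u \rangle$ contains a Baumslag--Solitar group $BS(1,\lambda)$, and the base-$\lambda$ Horner expansion $m = c_0 + \lambda(c_1 + \lambda(c_2 + \cdots))$ gives $u^m = u^{c_0} a u^{c_1} a \cdots a u^{c_r} a^{-r}$ of length $O(r) = O(\log m)$. In general $\lambda$ is only an algebraic unit, and the right tool is the $\lambda$-adic ($\beta$-expansion, Pisot) numeration system: being a unit, $\lambda$ is expanding at precisely the places $v$ with $|\lambda|_v > 1$ and non-expanding at the others, and this is exactly the arithmetic condition ensuring that every element of $\mathcal O$ has a finite base-$\lambda$ expansion with bounded digits and length $O(\log m)$. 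Feeding such an expansion into the Horner scheme, now with the $O(n)$-length words $w_n$ of the previous paragraph in place of conjugation by $a$, produces the uniform bound, which is finally carried back through the commensurability of the first step. Controlling these expansions simultaneously at all places of $S$, for non-rational $\lambda$ and non-abelian unipotent, is the technical core of the argument. Since a nontrivial unipotent in characteristic zero has infinite order, $u$ is then a genuine distortion element with at most logarithmic power growth. (Alternatively, one may deduce the theorem directly from the principal result of \cite{lmr}, namely that the word metric on such a $\Gamma$ is Lipschitz equivalent to the restriction of a left-invariant Riemannian metric on $\G$: a nontrivial unipotent $u$ satisfies $d_\G(e, u^n) = O(\log n)$ --- write $u^n = a_t b a_t^{-1}$ with $t \asymp \log n$ and $b$ bounded --- and Lipschitz equivalence then forces $|u^n|_\Gamma = O(\log n)$.)
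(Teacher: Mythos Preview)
The paper does not prove this theorem at all: it is stated as a quotation from \cite{lmr} and no argument is given, so there is nothing in the paper to compare your attempt against.

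As to the content of your sketch, the parenthetical alternative at the end is the honest route and is essentially how the statement is deduced from \cite{lmr}: the principal theorem there is precisely the bi-Lipschitz equivalence of the word metric on $\Gamma$ with the restricted Riemannian metric on $\G$, and unipotents in $\G$ have logarithmic displacement growth, so the claim follows immediately once one knows (Godement) that non-uniformity supplies unipotents.  Your main line --- arithmeticity, then a Mess-type configuration, then a numeration upgrade --- is a plausible outline of a direct argument, and the $SL(2,\Z)\ltimes\Z^2$ mechanism you describe does embed in $SL(3,\Z)$ and does give logarithmic growth along the subsequence $\operatorname{tr}(M^n)$, exactly parallel to the Mess example in the paper.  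But two caveats are worth flagging.  First, the existence of the required ``Mess configuration'' in a general arithmetic $\Gamma$ --- a $\Gamma$-rational semisimple element normalizing a root group with the right expansion at some place --- is itself a nontrivial structural fact, and your case split (infinite versus finite $S$-unit group) does not obviously exhaust the possibilities or guarantee the configuration.  Second, the $\beta$-expansion step is more delicate than you indicate: for the Horner scheme to give bounded digits and finite length you need $\lambda$ to satisfy a finiteness property (Property~(F) in the Pisot numeration literature), which holds for quadratic Pisot units but not for arbitrary algebraic units, and handling the non-abelian unipotent corrections uniformly in $m$ is exactly the hard part of \cite{lmr}.  So while the sketch correctly identifies the phenomena in play, it understates the depth of the technical work required, which is why the paper simply cites the result.
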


\section{Distortion in almost simple groups}

\begin{defn}
A group is called {\color{\bl}\em almost simple} if every normal subgroup is
finite or has finite index.
\end{defn}

As we saw in the proof of the theorem of Witte (Theorem \ref{thm:Witte}),
the fact that $SL(n, \Z)$ is almost simple when $n \ge 3$ plays a crucial
role.  This will also be true for our analysis of surface diffeomorphisms.

\begin{theorem}[Margulis \cite{Mar}]\label{thm:margulis}
Assume $\Gamma$ is an irreducible lattice in a semi-simple Lie group with
$\R-$rank $\ge 2,$ e.g. any finite index subgroup of $SL(n, \Z)$ 
with $n \ge 3$.
Then $\Gamma$ is almost simple.
\end{theorem}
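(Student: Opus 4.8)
The plan is to prove the equivalent assertion — Margulis's Normal Subgroup Theorem — that every normal subgroup $N \trianglelefteq \Gamma$ which is not finite must have finite index. Fix such an $N$, necessarily infinite, and set $\bar\Gamma := \Gamma/N$; it suffices to show $\bar\Gamma$ is finite. I would establish this by showing that $\bar\Gamma$ is simultaneously amenable and has Kazhdan's property (T), which forces finiteness for a discrete group: amenability yields a weak containment $1 \prec \lambda_{\bar\Gamma}$ of the trivial representation in the left regular representation, while property (T) makes the trivial representation isolated in the unitary dual, so $\lambda_{\bar\Gamma}$ must in fact contain an invariant unit vector, i.e.\ a nonzero constant function in $\ell^2(\bar\Gamma)$, whence $\bar\Gamma$ is finite. (Note $\bar\Gamma$ is finitely generated, being a quotient of the lattice $\Gamma$.)

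Property (T) of $\bar\Gamma$ is immediate: by Kazhdan's theorem an irreducible lattice in a semisimple Lie group of $\R$-rank $\ge 2$ has property (T), and property (T) passes to quotient groups. The substance is thus the amenability of $\bar\Gamma$, for which I would run Margulis's boundary argument. Write $G$ for the ambient Lie group (we may assume it has no compact factors; in the motivating case it is $SL(n,\R)$, which is simple) and let $P \le G$ be a minimal parabolic subgroup. Since $P$ has Langlands decomposition $P = MAU$ with $M$ compact, $A$ abelian and $U$ unipotent, $P$ is amenable, so the Furstenberg boundary $B := G/P$ is an amenable $G$-space, hence also an amenable $\Gamma$-space, with its canonical $\Gamma$-quasi-invariant measure class. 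The goal is to prove that $N$ acts \emph{ergodically} on $B$. For this I invoke Margulis's Factor Theorem: every measurable $\Gamma$-equivariant quotient of $B$ is $\Gamma$-isomorphic to $G/R$, with its $G$-action, for some parabolic subgroup $R$ satisfying $P \le R \le G$. Applied to the quotient of $B$ by the action of $N$ — which is a $\Gamma$-factor precisely because $N$ is normal in $\Gamma$ — this gives $B/N \cong G/R$. But $N$ acts trivially on $G/R$, so $N$ is contained in the normal core of $R$ in $G$; if $R \ne G$ this core is a proper normal subgroup of the semisimple group $G$, whose intersection with $\Gamma$ is finite (immediate from finiteness of the center when $G$ is simple, and from the irreducibility of $\Gamma$ in general), contradicting that $N$ is infinite. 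Hence $R = G$, so $B/N$ is a single point; that is, $N$ acts ergodically on $B$.

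It remains to deduce that $\bar\Gamma$ is amenable from the ergodicity of $N$ on the amenable $\Gamma$-space $B$. This follows from Zimmer's theory of amenable actions: if $\Gamma$ acts amenably on a Lebesgue space $X$ and $N \trianglelefteq \Gamma$, then $\bar\Gamma$ acts amenably on the space of $N$-ergodic components of $X$; when $N$ is ergodic that space is a point, so $\bar\Gamma$ acts amenably on a point, which is to say $\bar\Gamma$ is amenable. Combined with property (T) as above, this shows $\bar\Gamma$ is finite, proving almost simplicity. For the concluding example in the statement, $SL(n,\Z)$ with $n \ge 3$ is an irreducible lattice in the simple Lie group $SL(n,\R)$, which has $\R$-rank $n-1 \ge 2$, and every finite-index subgroup of $SL(n,\Z)$ is again such a lattice.

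The main obstacle is without question the Factor Theorem, which is the technical core of this entire circle of ideas and is precisely where the hypothesis $\R$-rank $\ge 2$ is used in an essential way: in rank one there are no proper intermediate parabolics $P \le R \le G$, the dichotomy collapses, and indeed the conclusion fails — $SL(2,\Z)$, being virtually a free group, has normal subgroups of infinite index. A genuine proof of the Factor Theorem requires substantial machinery — Poisson boundary theory, the fine structure of parabolic subgroups and of $\Gamma$-equivariant measurable maps out of $B$, and a contraction/dynamical argument exploiting the rich lattice of intermediate parabolics that higher rank provides — so in practice I would cite the Factor Theorem, along with Kazhdan's theorem and Zimmer's results on amenable actions, rather than reproving them, and present the assembly above as the proof.
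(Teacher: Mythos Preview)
The paper does not give a proof of this theorem; it is quoted from Margulis's monograph \cite{Mar} and used as a black box throughout. Your outline is precisely Margulis's own strategy---show that $\Gamma/N$ is simultaneously amenable (via the Factor Theorem applied to the Furstenberg boundary $G/P$) and has Kazhdan's property~(T), hence is finite---and your identification of the Factor Theorem as the technical core, together with $SL(2,\Z)$ as the illustrative rank-one failure, is on the mark.

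One genuine overstatement, however: the claim that ``by Kazhdan's theorem an irreducible lattice in a semisimple Lie group of $\R$-rank $\ge 2$ has property~(T)'' is false as written. The group $SL(2,\R)\times SL(2,\R)$ has $\R$-rank $2$ but does not have property~(T), and neither do its irreducible lattices---for instance $SL(2,\Z[\sqrt 2])$, which in fact appears later in this very paper. Margulis's proof still obtains property~(T) for the \emph{quotient} $\Gamma/N$ in such cases, but this requires a further argument exploiting the density of the projections of $\Gamma$ to the simple factors of $G$; it is not a direct corollary of Kazhdan's theorem applied to $\Gamma$ itself. For the headline example $SL(n,\Z)$ with $n\ge 3$ your argument is complete as stated, since $SL(n,\R)$ is simple of higher rank and genuinely has property~(T).
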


The following observation is a very easy consequence of the fact
that $\R$ has no distortion elements and no elements of finite
order. Nevertheless, it is a powerful tool in our investigations.

\begin{prop}[\cite{FH3}]\label{prop}
If $\G$ is a finitely generated almost simple group which
contains a distortion element and $\HH \subset \G$ is a
normal subgroup, then the only homomorphism from $\HH$
to $\R$ is the trivial one.
\end{prop}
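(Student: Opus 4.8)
The plan is to show that an arbitrary homomorphism $\psi : \HH \to \R$ has kernel containing a finite-index subgroup of $\G$, which forces $\psi(\HH)$ to be a finite subgroup of $\R$, hence trivial. First I would dispose of the case in which $\HH$ is finite: then $\psi(\HH)$ is a finite subgroup of $\R$ and so is already trivial. So assume $\HH$ is infinite. Since $\G$ is almost simple and $\HH$ is an infinite normal subgroup, $\HH$ has finite index in $\G$; in particular $\HH$ is finitely generated, so fixing a finite generating set $S$ of $\HH$ the homomorphism $\psi$ is Lipschitz for the word metric: $|\psi(h)| \le K|h|_S$ with $K = \max_{s \in S}|\psi(s)|$.

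The next step is to produce a genuine distortion element inside $\HH$. Let $u$ be a distortion element of $\G$ (as $\G$ is finitely generated, $u$ is distorted in $\G$ itself). Because $\HH$ has finite index in $\G$, some positive power $v = u^N$ lies in $\HH$, and powers of distortion elements are distortion elements, so $v$ is distorted in $\G$. Since a finite-index subgroup is quasi-isometrically embedded, the word length $|\cdot|_S$ on $\HH$ is bounded by an affine function of the word length on $\G$, so $v$ is distorted in $\HH$ as well, and it has infinite order. The elementary point that drives everything is that $\psi$ vanishes on every distortion element of $\HH$: if $w \in \HH$ satisfies $\liminf_n |w^n|_S/n = 0$, then $|\psi(w)| = |\psi(w^n)|/n \le K|w^n|_S/n$ for all $n$, hence $\psi(w) = 0$. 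In particular $\psi(v) = 0$.

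The crucial step is to spread this vanishing over all of $\G$. For any $g \in \G$, the conjugate $gvg^{-1}$ lies in $\HH$ because $\HH$ is normal, and conjugation by $g$ restricts to an automorphism of the finitely generated group $\HH$; automorphisms of a finitely generated group send distortion elements to distortion elements, since any two word metrics on $\HH$ are bi-Lipschitz equivalent. Hence $gvg^{-1}$ is again a distortion element of $\HH$, so $\psi(gvg^{-1}) = 0$ by the previous paragraph. Therefore the normal closure $R$ of $v$ in $\G$ is contained in $\ker\psi$. Since $R$ contains $v$, it is infinite, so almost simplicity of $\G$ forces $R$ to have finite index in $\G$, hence in $\HH$. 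Consequently $\HH/\ker\psi$ is finite and embeds in $\R$, so it is trivial, i.e.\ $\psi \equiv 0$.

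I expect the main obstacle to lie entirely in the structural reduction of the first two paragraphs rather than in any computation. The notion of distortion is attached to finitely generated groups, and the distortion element guaranteed by hypothesis lives in $\G$, not a priori in $\HH$; one must first invoke almost simplicity to see that $\HH$ is either finite or of finite index, and then transfer a distortion element from $\G$ into $\HH$ without destroying its distortion, which is where the fact that finite-index subgroups are undistorted enters. Once an honest distortion element of $\HH$ is in hand, the remaining points — that $\psi$ kills it and all of its $\G$-conjugates, and that their normal closure is then a finite-index subgroup inside $\ker\psi$ — are routine.
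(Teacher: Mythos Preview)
Your proof is correct and follows essentially the same route as the paper's: reduce to $\HH$ of finite index, push a power $v$ of the distortion element into $\HH$, observe that $v$ and all its $\G$-conjugates lie in $\ker\psi$, and conclude by almost simplicity that the normal closure of $v$ has finite index. You are more explicit than the paper on one point---the paper simply asserts that $v$, ``and hence $\D$'', lies in $\ker\psi$ because $\R$ has no distortion, whereas you spell out the quasi-isometric embedding of $\HH$ in $\G$ and the fact that conjugation by $g\in\G$ is an automorphism of $\HH$ preserving distortion; this extra care is warranted, since $\psi$ is only defined on $\HH$.
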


\begin{proof}
Since $\G$ is almost simple, $\HH$ is either finite or has finite index.
Clearly the result is true if $\HH$ is finite, so we assume it has
finite index. If $u$ is a distortion element in $\G$ then $v:=u^k \in
\HH$ for some $k > 0$.  Let $\D$ be the smallest normal subgroup 
of $\G$ containing $v$, i.e. the group generated by 
$\{ g^{-1} v g\ |\ g \in \G\}.$  Then $\D$ is infinite and normal
and hence has finite index in $\G$; it is obviously contained in $\HH$.  Thus
$\D$ has finite index in $\HH$.  Since $\R$ contains neither torsion
nor distortion elements, $v$, and hence $\D$ is in the kernel of
$\psi$ for every homomorphism $\psi: \HH \to \R$.  Since $\D$ has
finite index in $\HH$ we conclude that $\psi(\HH)$ is finite and hence
trivial.

\end{proof}

The last important ingredient we will need is the following result
of Thurston, originally motivated by the study of foliations.

\begin{theorem}[Thurston stability theorem \cite{Th}]
Let $\G$ be a finitely generated group and $M$ a connected
manifold.  Suppose
\[
\phi: \G \to \Diff^1(M)
\]
is a homomorphism and there is
$x_0 \in M$ such that for all $g \in \phi(\G)$
\[
g(x_0) = x_0 \text{ and } Dg(x_0) = I.
\]
Then either $\phi$ is trivial or there is a non-trivial
homomorphism from $\G$ to $\R$.
\end{theorem}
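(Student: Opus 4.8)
The plan is to extract a nontrivial additive map $\G\to\R^q$ from the first-order behaviour of the generators near a common fixed point, by rescaling the displacement of a well-chosen sequence of points shrinking to that fixed point. Assume $\phi$ is nontrivial. First I would locate a good base point $p$: set $A=\bigcap_{g\in\phi(\G)}\Fix(g)$, a closed set. If $\Int(A)=\emptyset$, take $p=x_0$; if $\Int(A)\ne\emptyset$, then it is a nonempty proper open subset of the connected manifold $M$, hence has nonempty frontier, and I take $p\in\partial(\Int(A))$. In both cases every $g\in\phi(\G)$ fixes $p$ with $Dg(p)=I$ --- by hypothesis when $p=x_0$, and by continuity of $Dg$ from $\Int(A)$, on which $g$ is the identity, in the other case --- while no neighbourhood of $p$ consists entirely of common fixed points, since otherwise $p$ would lie in $\Int(A)$.

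Now fix a finite generating set $\gamma_1,\dots,\gamma_k$ of $\G$, put $g_i=\phi(\gamma_i)$, and work in a chart centred at $p=0\in\R^q$. For $\gamma\in\G$ write $\psi_\gamma(x)=\phi(\gamma)(x)-x$, so that $\psi_\gamma(0)=0$, $D\psi_\gamma(0)=0$, and consequently $\sup_{|x|\le r}\|D\psi_\gamma(x)\|\to0$ and $\sup_{|x|\le r}|\psi_\gamma(x)|=o(r)$ as $r\to0$. Using the last property of $p$, choose points $x_n\to0$ with $x_n\ne0$ and $\delta_n:=\max_i|\psi_{\gamma_i}(x_n)|>0$ (one may as well take $x_n$ to nearly maximize $\max_i|\psi_{\gamma_i}|$ on the ball of radius $|x_n|$), and pass to a subsequence along which each $\psi_{\gamma_i}(x_n)/\delta_n$ converges. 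The core step is an induction on word length showing that for every $\gamma\in\G$ the rescaled displacement $(\phi(\gamma)(x_n)-x_n)/\delta_n$ converges to some $v_\gamma\in\R^q$ and that $C_\gamma:=\sup_n|\phi(\gamma)(x_n)-x_n|/\delta_n<\infty$. Writing $y_n=\phi(\gamma')(x_n)$, the cocycle identity $\phi(\gamma\gamma')(x)-x=\psi_\gamma(\phi(\gamma')(x))+\psi_{\gamma'}(x)$ gives $\phi(\gamma\gamma')(x_n)-x_n=\psi_\gamma(y_n)+(y_n-x_n)$; since $|y_n-x_n|\le C_{\gamma'}\delta_n=o(|x_n|)$, the mean value inequality yields $|\psi_\gamma(y_n)-\psi_\gamma(x_n)|\le|y_n-x_n|\,\sup_{|x|\le 2|x_n|}\|D\psi_\gamma(x)\|=o(\delta_n)$, so $\phi(\gamma\gamma')(x_n)-x_n=\delta_n(v_\gamma+v_{\gamma'})+o(\delta_n)$. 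Hence $v\colon\G\to(\R^q,+)$ is a homomorphism; it is nontrivial because $\max_i|v_{\gamma_i}|=\lim_n\max_i|\psi_{\gamma_i}(x_n)|/\delta_n=1$. Composing with a coordinate projection $\R^q\to\R$ on which $v$ is nontrivial then produces a nontrivial homomorphism $\G\to\R$.

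The step I expect to be the main obstacle is the error estimate in the induction: one must ensure that replacing $y_n$ by $x_n$ inside $\psi_\gamma$ costs only $o(\delta_n)$, not merely $o(|x_n|)$, even when $\delta_n$ is far smaller than $|x_n|$. This is precisely where the $C^1$ hypothesis is indispensable --- continuity of $D\psi_\gamma$ together with $D\psi_\gamma(0)=0$ gives $\sup_{|x|\le 2|x_n|}\|D\psi_\gamma(x)\|\to0$ --- and where the inductively-maintained bound $|y_n-x_n|=O(\delta_n)$ supplies the other factor. Establishing that bound for an inverse generator $\gamma_i^{-1}$ requires a short self-referential estimate from $\psi_{\gamma_i^{-1}}(x)=-\psi_{\gamma_i}\!\bigl(x+\psi_{\gamma_i^{-1}}(x)\bigr)$, comparing $|\psi_{\gamma_i^{-1}}(x_n)|$ with $|\psi_{\gamma_i}(x_n)|\le\delta_n$. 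A smaller but easily missed point is the reduction to a good base point when the common fixed-point set has nonempty interior; this is the only place connectedness of $M$ is used.
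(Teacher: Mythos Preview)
Your proposal is correct and follows essentially the same approach as the paper's proof (which is attributed there to Schachermayer): rescale the displacement $\widehat g(x_n)=g(x_n)-x_n$ by $M_n=\max_i|\widehat g_i(x_n)|$, pass to a subsequence so the limits exist for the generators, and use the cocycle identity together with the mean value theorem and $D\widehat g_i(0)=0$ to propagate convergence and additivity to arbitrary words. Your treatment is in fact more careful than the paper's in two places it glosses over: the reduction to a base point not interior to the common fixed set (where you correctly invoke connectedness of $M$ and continuity of $Dg$ at a frontier point of $\Int A$), and the need to handle inverse generators in the induction, which the paper leaves implicit.
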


\begin{proof}
The proof we give is due to W.~Schachermayer \cite{S}.
Let $\{g_i\}$ be a set of generators for $\phi(\G).$
The proof is local so there is no loss of generality
in  assuming $M = \R^m$ and that $x_0 = 0$ is 
not in the interior of the points fixed by all of $\phi(\G).$

For $g \in \phi(\G)$ let 
$\widehat g(x) = g(x) - x,$ so $g(x) = x + \widehat g(x)$ and $D\widehat g(0) = 0.$
We compute 
\begin{align*}\label{eqn1}
\widehat{gh}(x) &= g(h(x)) -x \\
&= h(x) - x + g(h(x)) - h(x)\\
&= \widehat h(x) + \widehat g(h(x))\\
&= \widehat h(x) + \widehat g( x + \widehat h(x))\\
&= \widehat g(x) + \widehat h(x) + \big (\widehat g( x + \widehat h(x)) - \widehat g(x)\big ).
\end{align*}

Hence we have shown that for all $g,h \in \G$ and for all $x \in \R^m$
\begin{equation}\label{eqn1}
\widehat{gh}(x)
= \widehat g(x) + \widehat h(x) + \big (\widehat g( x + \widehat h(x)) - \widehat g(x)\big ).
\end{equation}

Choose a sequence $\{x_n\}$ in $\R^m$ converging to $0$ such that for
some $i$ we have $|\widehat g_i(x_n)| \ne 0$ for all $n$. This is
possible since $0$ is not in the interior of the points fixed by all
of $\phi(\G).$

Let $M_n = \max \{|\widehat g_1( x_n)|, \dots, |\widehat g_k( x_n)|\}.$   Passing
to a subsequence we may assume that for each $i$ the limit
\[
L_i = \lim_{n \to \infty} \frac{\widehat g_i( x_n)}{M_n} 
\]
exists and that $\| L_i\| \le 1.$  For some $i$ we have
$\|L_i\| = 1$; for definiteness say for $i = 1$.

If $g$ is an arbitrary element of $\G$ such that the limit
\[
L = \lim_{n \to \infty} \frac{\widehat g( x_n)}{M_n} 
\]
exists then for each $i$ we will show that
\[
\lim_{n \to \infty} \frac{\widehat {g_i g}( x_n)}{M_n}  = L_i + L.
\]
Indeed because of Equation~(\ref{eqn1}) above it suffices to show
\begin{equation}\label{eqn2}
\lim_{n \to \infty} \frac{\widehat g_i( x_n + \widehat g(x_n)) - \widehat g_i(x_n)))}{M_n}  = 0.
\end{equation}
By the mean value theorem 
\[
\lim_{n \to \infty} \Big \| \frac{\widehat g_i( x_n + \widehat g(x_n)) - \widehat g_i(x_n)))}{M_n}\Big \|
\le 
\lim_{n \to \infty} \sup_{t \in [0,1]}  \|D\widehat g_i(z_n(t))\|
\Big \| \frac{ \widehat {g}( x_n)}{M_n}\Big \|,
\]
where $z_n(t) = x_n + t \widehat g(x_n).$
But
\[
\lim_{n \to \infty} \frac{\widehat g( x_n)}{M_n} = L \text{ and }
\lim_{n \to \infty} sup_{t \in [0,1]}  \|D\widehat g_i(z_n(t))\| = 0,
\]
since $D\widehat g_i(0) = 0$ and hence Equation (\ref{eqn2}) is established.

It follows that if we define $\Theta :\phi(\G) \to \R^m$ by
\[
\Theta(g) = \lim_{n \to \infty} \frac{\widehat g( x_n)}{M_n} 
\]
this gives a well defined homomorphism from $\phi(\G)$ to $\R^m.$
\end{proof}

The following theorem is much weaker than known results on this topic,
for example the theorem of Witte cited above or the definitive results
of \'E.~Ghys \cite{G} on $C^1$ actions of lattices on $S^1$.  For
those interested in circle actions the articles of Ghys, \cite{G} and
\cite{G2}, are recommended.  We present this ``toy'' theorem because
its proof is simple and this is the proof which we are able to
generalize to surfaces.

\begin{thm}[Toy Theorem]
Suppose $\G$ is a finitely generated almost simple group
and has a distortion element and suppose $\mu$ is a finite
probability measure on $S^1$.   If 
\[
\phi: \G \to \Diff_\mu( S^1) 
\]
is a homomorphism then $\phi(\G)$ is finite.
\end{thm}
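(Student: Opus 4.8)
The plan is to run exactly the strategy that will later be adapted to surfaces: an invariant measure makes the rotation number a homomorphism, Proposition~\ref{prop} kills every homomorphism from a normal subgroup of $\G$ to $\R$, and the Thurston stability theorem converts a common fixed point with trivial derivative into such a homomorphism.

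\emph{Step 1 (rotation number and the finite-index subgroup $K$).} Since $\mu$ is $\phi(\G)$-invariant, the map $\tilde g\mapsto\int_{S^1}(\tilde g(x)-x)\,d\mu(x)$ on the preimage of $\phi(\G)$ in $\widetilde{\Homeo}(S^1)$ is a homomorphism to $\R$ (the invariance of $\mu$ is exactly what makes the ``cocycle'' term integrate to zero), and it descends to the rotation number $\rho\colon\phi(\G)\to\R/\Z$, which is therefore a homomorphism. Then $\rho\circ\phi\colon\G\to\R/\Z$ is a homomorphism whose image, being a finitely generated subgroup of $\R/\Z$, has the form $\Z^{r}\times(\text{finite})$. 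If $r\ge 1$ it surjects onto $\Z$, giving a nontrivial homomorphism $\G\to\Z\subset\R$ and contradicting Proposition~\ref{prop} applied with $\HH=\G$. Hence the image is finite, so $K:=\ker(\rho\circ\phi)$ is a normal subgroup of finite index in $\G$; as a finite-index subgroup of a finitely generated group it is finitely generated, and every element of $\phi(K)$ has rotation number $0$.

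\emph{Step 2 (a point fixed by all of $\phi(K)$).} I claim $\Fix(\phi(K)):=\bigcap_{g\in K}\Fix(\phi(g))\ne\emptyset$, or else $\phi|_K$ is already trivial. If $\mu$ has an atom, the atoms of maximal $\mu$-mass form a finite, nonempty, $\phi(K)$-invariant set; since an orientation-preserving circle homeomorphism of rotation number $0$ fixes each of its periodic points (lift with translation number $0$ and use monotonicity), every element of $\phi(K)$ fixes each of these atoms. If $\mu$ is atomless, then $h(y):=\mu([p,y))\bmod 1$, for a fixed basepoint $p$, is a continuous degree-one monotone map $S^1\to S^1$ conjugating each $\phi(g)$ to the rotation by $\rho(\phi(g))$; for $g\in K$ this rotation is trivial, so $h\circ\phi(g)=h$. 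Since only countably many point-preimages of $h$ are nondegenerate, choose $\theta_0$ with $h^{-1}(\theta_0)=\{x_0\}$ a single point; then $\phi(g)(x_0)\in h^{-1}(\theta_0)=\{x_0\}$ for all $g\in K$. Either way $\Fix(\phi(K))\ne\emptyset$.

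\emph{Step 3 (Thurston stability and conclusion).} If $\Fix(\phi(K))=S^1$ then $\phi|_K$ is trivial and we are done; otherwise pick $x_0$ on the frontier of $\Fix(\phi(K))$, so $x_0$ is a common fixed point not interior to the common fixed set. The map $g\mapsto\log D\phi(g)(x_0)$ is a homomorphism $K\to\R$, and since $K$ is normal in $\G$ it is trivial by Proposition~\ref{prop}; thus $D\phi(g)(x_0)=1$ for every $g\in K$. Applying the Thurston stability theorem to $\phi|_K\colon K\to\Diff^1(S^1)$, either $\phi|_K$ is trivial or there is a nontrivial homomorphism $K\to\R$; the latter again contradicts Proposition~\ref{prop}. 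Hence $\phi|_K$ is trivial, so $K\subseteq\ker\phi$, and since $[\G:K]<\infty$ we get $\phi(\G)\cong\G/\ker\phi$ finite. The main obstacle is Step 2 --- producing a single point fixed by the entire finite-index subgroup $\phi(K)$ (Steps 1 and 3 are then formal); the care with atoms and with the standard monotone semiconjugacy attached to $\mu$ is the only delicate bookkeeping, and it is precisely the analogue of this fixed-point step that will require genuinely new ideas in the surface setting.
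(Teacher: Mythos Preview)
Your proof is correct and follows the same three-stage architecture as the paper --- rotation number, a finite-index normal subgroup with a common fixed point, then Thurston stability combined with Proposition~\ref{prop}. The execution differs in two places, and the second difference is worth noting.

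In Step~1 the paper argues more directly: since $\R/\Z$ has no distortion elements, the distortion element $f$ satisfies $\rho(\phi(f^n))=0$ for some $n$, so $\phi(f^n)$ has a fixed point. Your detour through the structure of finitely generated subgroups of $\R/\Z$ and Proposition~\ref{prop} with $\HH=\G$ works fine, but is a bit heavier than needed. The more substantive divergence is your Step~2. The paper bypasses your atom/atomless case analysis with the single observation that \emph{any $\mu$-preserving circle homeomorphism with a fixed point must fix all of $\supp(\mu)$} (if $x\notin\Fix(g)$, the orbit of a small arc about $x$ is pushed toward the boundary of its $\Fix(g)$-complementary component, forcing $\mu$ to vanish there). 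Hence $\supp(\mu)\subset\Fix(\phi(g))$ for every $g$ with $\rho(\phi(g))=0$, and $\G_0:=\{g:\phi(g)\text{ fixes }\supp(\mu)\text{ pointwise}\}$ is automatically infinite (it contains $f^n$) and normal. In fact your $K$ and the paper's $\G_0$ coincide; the paper's description simply hands you the common fixed set for free. This is not just a cosmetic shortcut: as the paper stresses, the statement ``$\supp(\mu)\subset\Fix(f^n)$'' is exactly the assertion whose surface analogue (Theorem~\ref{thm:distort}) constitutes the real work, so it is the formulation you want to carry forward.
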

\begin{proof}
We give a sketch of the proof.
The rotation number $\rho: \Diff_\mu( S^1) \to \R/\Z$
is a homomorphism because the group preserves an invariant
measure.
If $f$ is distorted then $\rho(f)$ has finite order in $\R/\Z$
since there are no distortion elements in $\R/\Z$.
Thus for some $n>0,\ \rho(f^n) = 0$ and $\Fix(f^n)$ is non-empty.

For any homeomorphism of $S^1$ leaving invariant a probability measure
$\mu$ and having fixed points the support $\supp(\mu )$ is a subset
of the fixed point set.  Hence  $\supp(\mu )\subset \Fix(f^n)$.

Define  $\G_0 := \{g \in \G\ |\ \phi(g) \text{ pointwise fixes }
\supp(\mu)\}.$ It is infinite, since $f^n \in \G_0$, and it is 
normal in $\G$. Hence it has finite index in $\G.$
It follows that $\phi(\G_0)$ is trivial.  This is because at
a point $x \in \supp(\mu)$ the homomorphism from $\G_0$ to
the multiplicative group  $\R^+$
given by $g \mapsto D\phi(g)_x$ must
be trivial by Proposition~\ref{prop} above.
Hence we may use the Thurston stability theorem (and another
application of Proposition~\ref{prop}) to conclude that
$\phi(\G_0)$ is trivial.  Since $\G_0$ has finite index in
$\G$ the result follows.
\end{proof}

We proceed now to indicate how the proof of the ``toy theorem'' generalizes
to the case of surfaces.
The statement that $\supp(\mu )\subset \Fix(f^n)$ if $\Fix(f^n)$ is non-empty,
is trivial for the circle, but generally false for surfaces.
Nevertheless, it was a key ingredient of the proof of the ``toy theorem.''
This apparent gap is filled by the following theorem from \cite{FH3}.

\begin{thm}[\cite{FH3}]\label{thm:distort}
Suppose that $S$ is a closed oriented surface, that $f$ is a distortion element in $\Diff(S)_0$ and that $\mu$ is an $f$-invariant Borel probability measure.
\begin{enumerate}
\item
If $S$ has genus at least two then  $\Per(f) = \Fix(f)$ and
$\supp(\mu) \subset \Fix(f)$.
\item
If $S = T^2$ and $\Per(f) \ne \emptyset$, then
all points of $\Per(f)$ have the same period, say $n$, 
and $\supp(\mu) \subset \Fix(f^n)$ 
\item
If $S = S^2$ and if $f^n$ has at least three fixed points
for some smallest $n>0$, then $\Per(f) = \Fix(f^n)$ and 
$\supp(\mu) \subset \Fix(f^n)$.
\end{enumerate}
\end{thm}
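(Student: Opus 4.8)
\emph{Plan.} Fix a finitely generated subgroup $\G=\langle g_1,\dots,g_k\rangle\subset\Diff(S)_0$ in which $f$ is distorted, and choose $N_j\to\infty$ with $f^{N_j}=w_j(g_1,\dots,g_k)$ and $|w_j|/N_j\to 0$. The plan is to extract two soft consequences of distortion and then feed them into the machinery of two-dimensional topological dynamics. The first is that $f$ has zero topological entropy: since $f^N$ is a word of length $|f^N|$ in the $g_i^{\pm1}$ we have $\sup\|Df^N\|\le\Lambda^{|f^N|}$ with $\Lambda=\max_i\sup\|Dg_i^{\pm1}\|\ge 1$, and hence $N\,h_{\mathrm{top}}(f)=h_{\mathrm{top}}(f^N)\le 2\log\Lambda\cdot|f^N|$; letting $N=N_j$ gives $h_{\mathrm{top}}(f)=0$. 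The second is a vanishing-displacement statement: lift $S$ to its universal cover $\ti S$ (the hyperbolic plane if the genus is at least two, $\R^2$ if $S=\T^2$), with the pulled-back metric. Because the $g_i$ and $f$ are isotopic to the identity they have preferred lifts $\ti g_i,\ti f$ (for genus $\ge 2$, the unique lifts commuting with the deck group, whose boundary extensions to $S^1_\infty$ are the identity, using that a cocompact Fuchsian group has trivial centralizer in $\Homeo(S^1)$; for $\T^2$, lifts of bounded displacement, unique up to integer translation, which suffices once one corrects by a rotation vector as below). Each $\ti g_i$ has deck-invariant, hence bounded, displacement $D_i$, so $\sup_x d_{\ti S}(\ti f^{\,N}(x),x)\le D\,|f^N|$ with $D=\max_i D_i$; along $N=N_j$ this forces $\sup_x d_{\ti S}(\ti f^{\,N}(x),x)=o(N)$. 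On $S^2$ the universal cover is compact, so this second input is vacuous --- which is exactly why the $S^2$ statement needs a hypothesis and a separate endgame.

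\emph{Periodic points lift to fixed points of iterates.} Let $p$ have $f$-period $n$ and let $\ti p$ be a lift. Then $\ti f^{\,n}(\ti p)=\gamma\ti p$ for a deck transformation $\gamma$, and since $\ti f^{\,n}$ commutes with $\gamma$ one gets $\ti f^{\,nm}(\ti p)=\gamma^m\ti p$, so $d_{\ti S}(\ti f^{\,nm}(\ti p),\ti p)=d_{\ti S}(\gamma^m\ti p,\ti p)$ grows linearly in $m$ unless $\gamma=1$ (it equals $m\|\gamma\|$ on $\T^2$, and is at least $m\,\ell(\gamma)-O(1)$ on a hyperbolic surface, $\gamma$ being hyperbolic because the deck group is torsion-free). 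Comparison with the displacement bound would then give $\liminf_N|f^N|/N>0$, contradicting distortion; hence $\gamma=1$, i.e. $\ti f^{\,n}(\ti p)=\ti p$. For $\T^2$ one first passes to a power of $f$ and subtracts a rational rotation vector --- itself forced to be rational by the same argument applied to the Misiurewicz--Ziemian rotation set, which distortion collapses to a single rational point --- but the upshot is the same. In particular $\ti f$ is an orientation-preserving homeomorphism of $\ti S\cong\R^2$ with a periodic orbit, so by Brouwer's plane translation theorem it, and hence $f$, has a fixed point; and on $\T^2$ every periodic orbit now has rotation vector zero, giving the ``common period'' half of part (2).

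\emph{The structural statements and the support of $\mu$.} Knowing that iterates of $\ti f$ fix every lift of every periodic point does not by itself exclude, say, a period-$n$ orbit coexisting with fixed points. Ruling this out is where the Thurston--Nielsen classification enters, together with Handel's fixed-point theorem for surface homeomorphisms and its Brouwer-theoretic refinements due to Le~Calvez and others: a homeomorphism isotopic to the identity that has periodic points of incompatible type relative to a maximal finite invariant set must contain a pseudo-Anosov piece, hence have positive entropy, contradicting the first input. This gives $\Per(f)=\Fix(f)$ in case (1), the common period $n$ in case (2), and, via the analogous analysis of $f^n$ relative to at least three of its fixed points, $\Per(f)=\Fix(f^n)$ in case (3) --- the ``three fixed points'' hypothesis being essential, since Calegari's distorted irrational rotation of $S^2$ has exactly two. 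Finally, for the support: if $\mu$ charged the complement of the relevant fixed set $K$ (namely $\Fix(f)$ in case (1), $\Fix(f^n)$ in cases (2) and (3)), then after replacing $f$ by a power one finds an $f$-invariant component $V$ of $S\setminus K$ with $\mu(V)>0$ on which $f$ acts without fixed points; lifting $V$ to $\ti S$ and using that the $\mu$-average displacement of $\ti f$ (a rotation vector) vanishes by the second input, a Poincar\'e--Birkhoff / Brouwer recurrence argument produces a periodic point of $f$ inside $V$, contradicting $\Per(f)\subset K$. Hence $\supp(\mu)\subset K$.

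\emph{Main obstacle.} The first two paragraphs are routine group-theoretic bookkeeping; the real content is the third. The hard part will be upgrading ``every periodic point lifts to a fixed point of an iterate of $\ti f$'' to the rigid statements about $\Per(f)$, and producing the contradiction in the support step --- both require the full strength of Thurston--Nielsen theory, refined Brouwer plane translation theory, and a delicate treatment of rotation vectors of invariant measures on open invariant subsurfaces, whose lifts to $\ti S$ need not carry finite mass. The splitting into three cases, and the ``at least three fixed points'' proviso on $S^2$, are dictated precisely by where these tools have teeth and by the genuine counterexamples in the low-complexity situations.
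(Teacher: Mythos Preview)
Your first two inputs---zero topological entropy and sublinear displacement in the universal cover---match the paper's first two non-distortion detectors (exponential growth rate of curves, Proposition~\ref{egr}, and linear displacement, Proposition~\ref{no linear displacement}), and your lifting argument that periodic points of $f$ lift to genuine periodic points of $\ti f$ is essentially the content of the latter. Up to here the approaches agree.

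The divergence, and the gap in your proposal, is in the third paragraph. The paper does \emph{not} argue by producing a periodic point inside a component $V$ of $S\setminus K$. Instead it introduces a third invariant, the \emph{spread} $\sigma_{f,\beta,\gamma}(\alpha)$ (Definition~\ref{defn:spread}), shows it is bounded by word length (Lemma~\ref{T-len growth}) and hence vanishes for distortion elements (Proposition~\ref{prop:spread}), and then proves the contrapositive: if $\mu(S\setminus\Fix(f))>0$ then at least one of egr, linear displacement, or spread is positive. Entropy and displacement alone do \emph{not} suffice; the residual cases---$f$ isotopic to the identity rel $\Fix(f)$, annular or negatively-curved components of $S\setminus\Fix(f)$ carrying positive measure---are handled by spread. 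The key analytic input there is Atkinson's cocycle recurrence theorem (Proposition~\ref{prop: atkinson} and Corollary~\ref{cor: atkinson}): a positive-measure set of points whose lifts drift to infinity forces the \emph{average} displacement $\hat\phi$ to be strictly positive on a set of positive measure, which is what makes the spread positive.

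Your alternative route---``the $\mu$-average displacement vanishes, so a Poincar\'e--Birkhoff/Brouwer argument produces a periodic point inside $V$''---runs in the wrong direction and is not justified. On an annular $V$ with $\partial V\subset\Fix(f)$, zero average rotation does not yield an \emph{interior} periodic point; Franks-type theorems give fixed points when $0$ is in the rotation set, but those may all lie on $\partial V\subset K$, which is no contradiction. On components of negative Euler characteristic there is no Poincar\'e--Birkhoff theorem available at all. The paper's argument goes the other way: positive measure off $K$ forces \emph{non-zero} rotation (via Atkinson), hence positive spread, hence non-distortion. Similarly, in the $\Per(f)=\Fix(f)$ step, ``incompatible periodic data forces a pseudo-Anosov piece'' is not enough: when the Thurston canonical form rel a finite $P\subset\Fix(f)$ is reducible with no pA pieces, there are nontrivial Dehn twists, and it is linear displacement or spread---not entropy---that detects them. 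You have correctly flagged the third paragraph as the hard part, but the missing idea is precisely this spread invariant and the Atkinson recurrence that feeds it.
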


We can now nearly copy the proof of the ``Toy Theorem'' to 
obtain the following.

\begin{thm}[\cite{FH3}]\label{thm:lattice}
Suppose $S$ is a closed oriented surface of genus at least one and
$\mu$ is a Borel probability measure on $S$ with infinite support.
Suppose $\G$ is finitely generated, almost simple and has a 
distortion element.  Then any homomorphism
\[
\phi: \G \to \Diff_\mu(S)
\]
has finite image.
\end{thm}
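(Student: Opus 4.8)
The plan is to follow the structure of the Toy Theorem, using Theorem~\ref{thm:distort} as the replacement for the trivially-true statement that the support of an invariant measure lies in the fixed set of a map with fixed points. Let $f = \phi(g_0)$ where $g_0$ is a distortion element of $\G$, and note $f$ is a distortion element in $\Diff_\mu(S)$ and hence in $\Diff(S)$; since $S$ has genus at least one and $\Diff_\mu(S) \subset \Diff(S)$, I first want to arrange that $f$ lies in $\Diff(S)_0$, the identity component. The mapping class group of $S$ is (virtually) well understood, and the natural homomorphism $\Diff(S) \to \text{MCG}(S)$ has image in a group with no distortion elements in the relevant sense (for the torus this is $SL(2,\Z)$, which has no distorted elements of infinite order other than what is controlled, and for higher genus the mapping class group is known to have no distortion elements by results on its geometry); in any case, passing to the finite-index normal subgroup $\G_1 = \phi^{-1}(\Diff(S)_0) \cap \G$ — which has finite index because $\G$ is almost simple and the image of $\G$ in $\text{MCG}(S)$ must be finite, being a quotient of an almost simple group with a distortion element by Proposition~\ref{prop}-type reasoning applied to the abelianization or to known properties of $\text{MCG}(S)$ — we may assume $\phi(\G) \subset \Diff_\mu(S)_0$.

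Next I would apply Theorem~\ref{thm:distort}. In each of the three cases (genus $\ge 2$; $S = T^2$; $S = S^2$, though $S^2$ is excluded here by the genus hypothesis) the conclusion is that there is $n > 0$ with $\supp(\mu) \subset \Fix(f^n)$ — for genus $\ge 2$ take $n = 1$; for $T^2$ one needs $\Per(f) \neq \emptyset$, which follows because $f$ preserves a measure and, being isotopic to the identity with a measure of infinite support, has a fixed point (alternatively, $\rho(f)$ considerations or a Lefschetz/Nielsen argument). The key point is that $\supp(\mu)$ is infinite by hypothesis, so $\Fix(f^n)$ is infinite. Now define $\G_0 := \{ g \in \G_1 \ |\ \phi(g) \text{ pointwise fixes } \supp(\mu)\}$. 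Since $g_0^{kn} \in \G_0$ for appropriate $k$ (so $\G_0$ is infinite) and $\G_0$ is visibly normal in $\G$ (as $\supp(\mu)$ is $\phi(\G)$-invariant, being the support of the unique-up-to-nothing invariant measure — actually $\supp\mu$ is invariant because $\mu$ is, so conjugating an element fixing $\supp\mu$ still fixes $\supp\mu$), almost simplicity forces $\G_0$ to have finite index in $\G$.

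Finally I would show $\phi(\G_0)$ is trivial, exactly as in the Toy Theorem. At each point $x \in \supp(\mu)$, the derivative map $g \mapsto D\phi(g)_x \in GL(2,\R)$ restricted to $\G_0$ has image in the stabilizer, but composing with $\det$ gives a homomorphism $\G_0 \to \R^+ \cong \R$ which is trivial by Proposition~\ref{prop}; moreover, since $\phi(\G_0)$ preserves $\mu$ and fixes $\supp\mu$ pointwise while being isotopic to the identity, an argument with the derivative action on the tangent space at a point of $\supp\mu$ — which has full support, so the linearization preserves a probability measure on the circle of directions, forcing the derivatives to a common conjugate of rotations, hence to a group admitting a homomorphism to $\R$ which must again be trivial by Proposition~\ref{prop} — shows $D\phi(g)_x = I$ for all $g \in \G_0$ and all $x \in \supp\mu$. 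Then the Thurston stability theorem applies: either $\phi|_{\G_0}$ is trivial, or there is a nontrivial homomorphism $\G_0 \to \R$, the latter impossible by Proposition~\ref{prop} (with $\HH = \G_0$). Hence $\phi(\G_0)$ is trivial, and since $\G_0$ has finite index in $\G$, the image $\phi(\G)$ is finite.

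The main obstacle I anticipate is the very first step: ensuring the distortion element $f$ can be taken in $\Diff(S)_0$, i.e.\ controlling the image of $\phi$ in the mapping class group. This requires knowing that $\text{MCG}(S)$ — which is $SL(2,\Z)$ for $T^2$ and a more complicated group for higher genus — receives no interesting homomorphism from an almost simple group carrying a distortion element, or more directly that such homomorphisms have finite image; for $T^2$ one can likely argue by hand using that $SL(2,\Z)$ is virtually free and free groups have no distortion elements, but the higher-genus case leans on nontrivial structural results about mapping class groups (e.g.\ that they contain no distortion elements, or that they are not "almost simple" in a way compatible with the hypotheses). The verification that $D\phi(g)_x = I$ on $\supp\mu$, while morally routine given Proposition~\ref{prop}, also needs a little care because the derivative cocycle lands in $GL(2,\R)$ rather than an abelian group, so one must first quotient out to get something to which Proposition~\ref{prop} applies and then separately handle the remaining (elliptic/rotational) part using invariance of $\mu$.
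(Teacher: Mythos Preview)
Your overall architecture matches the paper's: apply Theorem~\ref{thm:distort} to get $\supp(\mu)\subset\Fix(f^n)$, form the normal subgroup $\G_0$ of elements whose image pointwise fixes $\supp(\mu)$, use almost simplicity to make $\G_0$ finite index, and finish with Thurston stability. Your worry about first landing in $\Diff(S)_0$ is legitimate---the paper's sketch, which treats only genus $>1$, also glosses over it---and it is handled essentially as you guess: mapping class groups have no distortion elements (virtually free for $T^2$; Farb--Lubotzky--Minsky for higher genus), so a power of the image of the distortion element lies in $\Diff(S)_0$, and that power is still distorted.

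The genuine gap is your argument that $D\phi(g)_x = I$ for $g\in\G_0$. Killing $\det$ via Proposition~\ref{prop} is fine, but the claim that ``the linearization preserves a probability measure on the circle of directions'' is not justified: invariance of $\mu$ gives you no control over the derivative at a single fixed point, and there is no canonical induced measure on the tangent circle. The paper's key device, which you are missing, is to choose $x$ on the \emph{frontier} of $\supp(\mu)$ which is also an accumulation point of $\supp(\mu)$ (such $x$ exists since $\supp(\mu)$ is infinite and, after the easy case $\supp(\mu)=S$, proper). Nearby points of $\supp(\mu)$ are fixed by every $\phi(g)$, $g\in\G_0$, so the unit direction $v\in T_xS$ along which they accumulate is fixed by every $D\phi(g)_x$. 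This gives a global fixed point for the projectivized $\G_0$-action on the tangent circle $S^1\subset T_xS$; Proposition~\ref{prop} kills the derivative at $v$ of that circle action, Thurston stability on $S^1$ then forces the projectivized action to be trivial (so each $D\phi(g)_x$ is a positive scalar), one more use of Proposition~\ref{prop} kills the scalar, and only then does Thurston stability on $S$ apply at $x$.
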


\begin{proof}
We present only the case that $S$ has genus greater than one.
Define  $\G_0 := \{g \in \G\ |\ \phi(g) \text{ pointwise fixes }
\supp(\mu)\}.$ It is infinite, since by Theorem~\ref{thm:distort}
the distortion element is in $\G_0$, and it is 
normal in $\G$. 
Hence $\G_0$  has finite index in $\G.$

We wish to show that $\phi(\G_0)$ is trivial using the Thurston
stability theorem.  Let $x$ be a point in the frontier of $\supp(\mu)$
which is an accumulation point of $\supp(\mu)$.  There is then a unit
tangent vector $v \in TM_x$ which is fixed by $D\phi(g)_x$ for 
all $g \in \G_0$.  If we denote the unit sphere in the tangent
space $TM_x$ by $S^1$ then projectivization of $D\phi(g)_x$ 
gives an action of $\G_0$ on $S^1$ with global fixed point 
$v$.  There is then a homomorphism from $\G_0$ to $\R^+$ given 
by mapping $g$ to the derivative at $v$ of the action of $g$
on $S^1.$  This must be trivial by Proposition~\ref{prop} above.  
Hence we may apply the Thurston stability theorem to the
action of $\G_0$ on $S^1$ to conclude that it is trivial,
i.e., that $D\phi(g)_x = I$ for all $g \in \G_0$.
We may now apply the Thurston stability theorem to the action
of $\G_0$ on $S$ to conclude that
$\phi(\G_0)$ is trivial.  Since $\G_0$ has finite index in
$\G$ the result follows.
\end{proof}

This result was previously known
in the special case of symplectic diffeomorphisms by a result 
of L. Polterovich \cite{P}.

The result above also holds with $\supp(\mu)$ finite if 
$\G$ is a Kazhdan group (aka $\G$ has property T).
(see \cite{K})

The fact that the hypotheses of Theorem \ref{thm:lattice} are
satisfied by a large class of non-uniform lattices
follows from the result of Lubotzky, Mozes, and Raghunathan,
Theorem \ref{thm:lmr}, together with
Theorem \ref{thm:margulis},
the Margulis normal subgroup theorem.

An example illustrating Theorem \ref{thm:lattice} starts
with an action on $S^1.$

\begin{ex}
Let $\G$ be the subgroup of $PSL(2,\Z[\sqrt{2}])$ generated by 
\[
A =
\begin{pmatrix}
\lambda^{-1} &  0\\
0 &  \lambda\\
\end{pmatrix}
\text{ and }
B =
\begin{pmatrix}
1 &  1\\
0 & 1 \\
\end{pmatrix}.
\]

where $\lambda = \sqrt{2} +1.$  Note $\lambda^{-1} = \sqrt{2} -1$.
\end{ex}

These matrices satisfy
\[
A^{-n}BA^n = 
\begin{pmatrix}
1 & \lambda^{2n} \\
0 & 1 \\
\end{pmatrix}
\]
and
\[
A^n BA^{-n} = 
\begin{pmatrix}
1 & \lambda^{-2n} \\
0 & 1 \\
\end{pmatrix}.
\]
It is easy to see that $m =\lambda^{2n} + \lambda^{-2n}$ is
an integer. Hence 
\[
(A^{-n}BA^n) (A^n BA^{-n}) =
\begin{pmatrix}
1 & \lambda^{2n} + \lambda^{-2n} \\
0 & 1 \\
\end{pmatrix}
= B^m.
\]
We have shown that $|B^m| \le 4n+2$ so 
\[
{
\liminf_{n \to \infty} \frac{|B^m|}{m} 
\le \liminf_{n \to \infty} \frac{4n+2}{\lambda^{2n}} = 0,
}
\]
so $B$ is distorted.  The group $\G$ acts naturally on $\RP^1$ (the lines 
through the origin in $\R^2$ ) which is diffeomorphic to $S^1$.  The
element $B$ has a single fixed point, the $x-$axis, and the only $B$
invariant measure is supported on this point.

In example 1.6.K of \cite{P} Polterovich considers the
embedding $\psi: \G \to PSL(2,\R) \times PSL(2,\R)$ where
$\psi(g) = (g, \bar g)$ with $\bar g$ denoting the conjugate of $g$
obtained by replacing an entry $a+b\sqrt{2}$ with $a-b\sqrt{2}.$
He points out that the image of $\psi$ is an irreducible
non-uniform lattice in a Lie group of real rank $2.$  Of course
$(B, \bar B) = (B, B)$ is a distortion element in $\psi(\G)$ and in 
the product action of $PSL(2,\R) \times PSL(2,\R)$ on $T^2 = S^1 \times S^1$ 
it has only one fixed point $(p, p)$ where $p$ is the fixed point of 
$B$ acting on $S^1.$  It is also clear that the only $(B, \bar B)$ invariant
measure is supported on this point.  It is easy to see that there are
elements of $\psi(\G)$ which do not fix this point, and hence there is
no probability measure invariant under all of $\psi(\G).$

Under the stronger hypothesis that the group $\G$ contains a
subgroup isomorphic to the Heisenberg group we can remove the
hypothesis that $\supp(\mu)$ is infinite and allow the case
that $S = S^2.$

\begin{thm}[\cite{FH3}]
Suppose $S$ is a closed oriented surface
with Borel probability measure $\mu$  and 
$\G$ is a finitely generated, almost simple group with
a subgroup isomorphic to the Heisenberg group.
Then any homomorphism
\[
\phi: \G \to \Diff_\mu(S)
\]
has finite image.
\end{thm}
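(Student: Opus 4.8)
The plan is to reduce, in every case, to a situation where the Thurston stability theorem applies: I want a finite index normal subgroup $\G_1\trianglelefteq\G$ and a point $x_0\in S$ with $\phi(g)(x_0)=x_0$ and $D\phi(g)_{x_0}=I$ for all $g\in\G_1$; then the Thurston stability theorem forces $\phi(\G_1)$ to be trivial or else produces a nontrivial homomorphism $\G_1\to\R$, which Proposition~\ref{prop} rules out, since $\G_1$ is normal in $\G$ and $\G$ is almost simple with a distortion element. Hence $\phi(\G_1)$, and so $\phi(\G)$, is finite. The Heisenberg subgroup $\HH=\langle g,h\rangle\subset\G$, with $f=[g,h]$ central and $[g^n,h^n]=f^{n^2}$, manufactures the distortion element: $f$ and every nontrivial power of it is distorted in $\G$. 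We may assume $\phi(\G)$ is infinite; then $\ker\phi$, a normal subgroup of the almost simple group $\G$ of infinite index, is finite, so $\bar f:=\phi(f)$ has infinite order and is a distortion element of $\Diff_\mu(S)$, with $\bar f=[\bar g,\bar h]$ commuting with $\bar g:=\phi(g)$ and $\bar h:=\phi(h)$. I would treat the cases $\supp(\mu)$ infinite and $\supp(\mu)$ finite separately.

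Suppose $\supp(\mu)$ is infinite. If $S$ has genus at least one this is exactly Theorem~\ref{thm:lattice}. If $S=S^2$ I would first establish that $\supp(\mu)\subset\Fix(\bar f^n)$ for some $n$. Granting that, put $\G_0=\{g\in\G:\phi(g)\ \text{fixes}\ \supp(\mu)\ \text{pointwise}\}$; it contains $f^n$, so it is normal of finite index, and the argument in the proof of Theorem~\ref{thm:lattice} applies on $S^2$ as well (choose an accumulation point $x$ of $\supp(\mu)$ in its frontier, find a common fixed direction for $\{D\phi(g)_x:g\in\G_0\}$, kill the induced homomorphism $\G_0\to\R^+$ by Proposition~\ref{prop}, apply the Thurston stability theorem to the action on the circle of directions at $x$ to get $D\phi(g)_x=I$ for $g\in\G_0$, and apply it once more to the action on $S^2$). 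To obtain $\supp(\mu)\subset\Fix(\bar f^n)$: if for the least $n>0$ the map $\bar f^n$ has at least three fixed points, this is Theorem~\ref{thm:distort}(3). Otherwise every iterate of $\bar f$ has at most two fixed points, so $\Per(\bar f)=\Fix(\bar f)$ has at most two points, and since $\bar g,\bar h$ commute with $\bar f$ they preserve it; after replacing $\HH$ by an index $\le 2$ Heisenberg subgroup, $\phi(\HH)$ fixes $\Per(\bar f)$ pointwise and acts on the complementary open disk or open annulus preserving the restriction of $\mu$. If $\supp(\mu)$ met the open disk this would violate Brouwer's translation theorem, since $\bar f$ is fixed-point-free there while $\mu$ gives it positive mass; if $\supp(\mu)$ met the open annulus, the mean rotation number with respect to the $\phi(\HH)$-invariant measure there is a homomorphism on lifts, so $\bar f=[\bar g,\bar h]$ has a lift of rotation number $0$ and therefore a fixed point in the annulus (a Poincar\'e--Birkhoff/Franks argument), again a contradiction. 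Hence in this subcase some iterate of $\bar f$ does have at least three fixed points.

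Suppose $\supp(\mu)$ is finite. Then $\mu$ is a finite sum of point masses permuted, with weights, by $\phi(\G)$, so $\G_0=\{g\in\G:\phi(g)\ \text{fixes}\ \supp(\mu)\ \text{pointwise}\}$ is normal of finite index and contains a Heisenberg subgroup; fix one with generators $a=g^k,\ b=h^k$, so that $c:=[a,b]$ is a nontrivial power of $f$ (hence a distortion element), is central in $\langle a,b\rangle$, and $a,b,c\in\G_0$. For each $x\in\supp(\mu)$ the group $\langle D\phi(a)_x,D\phi(b)_x\rangle\subset GL(2,\R)$ is finitely generated and $2$-step nilpotent, hence virtually abelian (the identity component of its Zariski closure is abelian), so it has finite commutator subgroup, and therefore $D\phi(c)_x=[D\phi(a)_x,D\phi(b)_x]$ has finite order. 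Taking $m$ a common multiple of these orders over the finite set $\supp(\mu)$, the element $c^m$ has infinite order and satisfies $D\phi(c^m)_x=I$ for every $x\in\supp(\mu)$, so $\G_1=\{g\in\G_0:D\phi(g)_x=I\ \text{for all}\ x\in\supp(\mu)\}$ is infinite; it is normal in $\G$ (by the chain rule, using that $\phi(\G)$ permutes $\supp(\mu)$), hence of finite index. Since $\phi(\G_1)$ fixes every point of $\supp(\mu)$ with identity derivative, the Thurston stability theorem together with Proposition~\ref{prop} gives that $\phi(\G_1)$ is trivial, so $\phi(\G)$ is finite.

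The step I expect to be the real obstacle is the exceptional subcase on $S^2$: a measure-preserving distortion element all of whose iterates have at most two fixed points. This is precisely the case left open by Theorem~\ref{thm:distort}(3), and it is here that the hypothesis that $\G$ contains a \emph{Heisenberg} subgroup, rather than merely a distortion element, is essential: one needs the commuting partners $\bar g,\bar h$ of $\bar f$ to run the Brouwer and annulus Poincar\'e--Birkhoff arguments, and making the open-annulus fixed point statement work for an arbitrary invariant probability measure is the main technical point beyond what Theorems~\ref{thm:distort} and \ref{thm:lattice} already provide.
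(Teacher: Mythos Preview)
The paper does not prove this theorem; it only states it and cites \cite{FH3}, so there is no argument in the text to compare yours against.  Your outline is the natural extension of the proof given for Theorem~\ref{thm:lattice} and is essentially correct.

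Two remarks.  In the finite-support case your $GL(2,\R)$ step can be made explicit: $D\phi(c)_x=[D\phi(a)_x,D\phi(b)_x]\in SL(2,\R)$ is central in $\langle D\phi(a)_x,D\phi(b)_x\rangle$; if it had two distinct eigenvalues, or were a nontrivial Jordan block, its centralizer in $GL(2,\R)$ would already be abelian, forcing $D\phi(c)_x=I$.  The only remaining possibility is $D\phi(c)_x=\pm I$, so $c^2\in\G_1$ at every $x\in\supp(\mu)$, and $\G_1$ is infinite as you claim.  Normality of $\G_1$ in $\G$ follows from the chain rule exactly as you indicate.

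You have also correctly isolated the one genuinely new step, the $S^2$ subcase in which every iterate of $\bar f$ has at most two fixed points; this is precisely where ``Heisenberg'' is needed rather than ``distortion element''.  The one-fixed-point case is handled by Brouwer plus Poincar\'e recurrence, as you say.  For two fixed points the cleanest way to make your sketch rigorous is to blow up both fixed points to boundary circles (the maps are $C^1$ there, so $\bar g',\bar h',\bar f$ extend to homeomorphisms of the \emph{closed} annulus); on the closed annulus the $\mu$-mean rotation number is a genuine homomorphism on lifts, so the commutator lift $[\tilde g',\tilde h']$ of $\bar f^k$ has mean rotation number zero, and Franks' annulus fixed-point theorem then produces an interior fixed point, contradicting $\Fix(\bar f^k)=\{p,q\}$.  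This is exactly the technical point you flagged.
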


\section{\color{\bl}Parallels between $\Diff(S^1)_0$ and $\Diff_\mu(S)_0$}
\noindent
In general there seem to be strong parallels between results about
$\Diff(S^1)_0$ and $\Diff_\mu(S)_0$.  For example, Witte's theorem
and our results above.   There are several other examples which we now
cite.

\begin{theorem}[H\"older]
Suppose $\G$ is a subgroup of $\Diff(S^1)_0$ which acts freely
(no non-trivial element has a fixed point).  Then $\G$ is abelian.
\end{theorem}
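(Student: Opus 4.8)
The plan is to reduce the statement to the classical H\"older theorem for groups acting freely on the real line, and then transfer that result to $S^1$ by passing to the universal cover. First I would recall (or reprove) the line version: if a group $H$ acts on $\R$ by orientation-preserving homeomorphisms and no non-trivial element has a fixed point, then $H$ is abelian.

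For the line version, fix a basepoint $x_0\in\R$ and set $g\succ h$ when $g(x_0)>h(x_0)$. Freeness makes $g\mapsto g(x_0)$ injective --- if $h^{-1}g$ fixes $x_0$ it is the identity --- so $\succ$ is a total order on $H$. Moreover every non-trivial $k\in H$ has no fixed point, so $k(x_0)>x_0$ forces $k(x)>x$ for all $x$ (otherwise the intermediate value theorem produces a fixed point); this shows the order is bi-invariant and also yields the Archimedean property, since $g(x_0)>x_0$ makes the sequence $g^n(x_0)$ strictly increasing and unbounded (a finite limit would be a fixed point of $g$), so some power $g^n$ exceeds any prescribed $h$. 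A group carrying an Archimedean left-invariant order is order-isomorphic to a subgroup of $(\R,+)$ by the classical H\"older structure theorem, hence abelian.

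Now for the circle: let $\ti\G$ denote the group of all lifts to $\R$ of elements of $\G$, an extension $1\to\Z\to\ti\G\to\G\to 1$ in which $\Z$ (the group of integer translations) is central. I claim $\ti\G$ acts freely on $\R$. If $\ti g\in\ti\G$ is non-trivial and projects to the identity of $S^1$, then $\ti g$ is a non-zero integer translation, which has no fixed point; and if $\ti g$ projects to some $g\in\G$ with $g\ne\mathrm{id}$, then a fixed point of $\ti g$ in $\R$ would project to a fixed point of $g$ in $S^1$, contradicting that $\G$ acts freely. Applying the line version to $\ti\G$ shows $\ti\G$ is abelian, and hence so is its quotient $\G=\ti\G/\Z$.

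The order-theoretic bookkeeping is routine; the one genuinely substantive ingredient is the classical H\"older structure theorem that an Archimedean ordered group embeds in $(\R,+)$, which is precisely the step that converts ``acts freely'' into ``abelian''. (Alternatively one could extract a $\G$-invariant Borel probability measure on $S^1$ from freeness and conjugate $\G$ into the rotation group, but the lifting argument is cleaner and mirrors the use of universal covers in the proof of Witte's theorem above.)
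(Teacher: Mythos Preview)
Your argument is correct and is precisely the classical proof of H\"older's theorem: lift to the universal cover, observe that the lifted group $\ti\G$ acts freely on $\R$ (integer translations have no fixed points, and a fixed point upstairs would project to one downstairs), use the orbit of a basepoint to manufacture a bi-invariant Archimedean total order on $\ti\G$, and invoke the structure theorem for Archimedean ordered groups to embed $\ti\G$ in $(\R,+)$. Each step you outline is sound; the only black box is the Archimedean-order-implies-subgroup-of-$\R$ step, which is exactly the content traditionally attributed to H\"older and is appropriate to cite.

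As for comparison with the paper: the paper does not give a proof of this theorem at all. It simply states the result and refers the reader to \cite{FS}. So there is no in-paper argument to compare yours against; your write-up would serve as a self-contained replacement for that citation.
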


See $\cite{FS}$ for a proof.
There is an analog of this result for dimension two.  It is a
corollary of the following celebrated result.

\begin{theorem}[Arnold Conjecture: Conley-Zehnder]
Suppose $\omega$ is Lebesgue measure and
\[
f \in \Diff_\omega(\T^2)_0
\]
is in the commutator subgroup. Then $f$ has (at least three) fixed points.
\end{theorem}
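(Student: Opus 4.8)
The statement combines two facts of quite different character, so I would prove it in two stages: a soft group--theoretic reduction showing that any commutator is a Hamiltonian diffeomorphism, and then the Arnol'd conjecture for the torus, which is the substantial theorem of Conley and Zehnder. (The Lefschetz fixed point theorem is of no help: $f_*$ acts as the identity on $H_*(\T^2)$, so the Lefschetz number of $f$ equals $\chi(\T^2) = 0$, and even an irrational translation of $\T^2$ --- which is area preserving and isotopic to the identity --- has no fixed point; thus the hypothesis that $f$ lie in the commutator subgroup is essential.)

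\emph{Stage 1: reduce to Hamiltonian diffeomorphisms.} For $f \in \Diff_\omega(\T^2)_0$ I would pick any lift $\ti f : \R^2 \to \R^2$ and form the mean rotation vector
\[
\rho(f) = \int_{\T^2} \big( \ti f(x) - x \big)\, d\omega(x) \in \R^2/\Z^2,
\]
which is well defined since two lifts differ by a translation in $\Z^2$. Because $g$ preserves $\omega$ and $y \mapsto \ti f(y) - y$ is $\Z^2$-periodic, a one-line change of variables gives $\rho(fg) = \rho(f) + \rho(g)$, so $\rho$ is a homomorphism onto the abelian group $\R^2/\Z^2 \cong H^1(\T^2;\R)/H^1(\T^2;\Z)$ --- the flux homomorphism of $\Diff_\omega(\T^2)_0$. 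Being a homomorphism to an abelian group, $\rho$ vanishes on the commutator subgroup, so our $f$ has zero flux. By a standard result (Banyaga), the kernel of the flux homomorphism is exactly the group of Hamiltonian diffeomorphisms; concretely, $f$ is the time-one map of an isotopy whose generating vector field $X_t$ satisfies $\iota_{X_t}\omega = dH_t$ for a smooth $1$-periodic \emph{single-valued} family $H_t : \T^2 \to \R$. (Only the inclusion ``commutator subgroup $\subseteq$ Hamiltonian'' is needed; the reverse holds too, since $\mathrm{Ham}(\T^2)$ is perfect, so that being a commutator here is \emph{equivalent} to being Hamiltonian.)

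\emph{Stage 2: count the fixed points of a Hamiltonian diffeomorphism of $\T^2$.} Fixed points of $f$ correspond to $1$-periodic orbits of the flow of $H_t$, and the contractible ones are precisely the critical points of the symplectic action functional
\[
\mathcal{A}_H(\gamma) = \int_{D^2} \bar\gamma^* \omega - \int_0^1 H_t(\gamma(t))\, dt
\]
on the space of contractible loops $\gamma : S^1 \to \T^2$, where $\bar\gamma$ is any disk bounding $\gamma$; the functional is single-valued precisely because $H_t$ is and because $\pi_2(\T^2) = 0$. There are then two ways to finish. (i) Conley and Zehnder's original finite-dimensional reduction: expand $\gamma$ in a Fourier series, split off its mean in $\T^2$, and perform a Lyapunov--Schmidt (saddle-point) reduction in the remaining modes, where the Hessian of the quadratic part splits into positive- and negative-definite pieces; this produces an asymptotically quadratic function on $\T^2 \times \R^N$ whose critical points biject with the contractible $1$-periodic orbits. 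Lusternik--Schnirelmann theory (or Conley index theory) for such functions then forces at least three critical points, since $\mathrm{cat}(\T^2) = 3$; hence $f$ has at least three fixed points, and four if $f$ is nondegenerate, by the Morse inequalities on $\T^2$. (ii) Alternatively, build the Floer homology $HF_*(f)$ of $H$, identify it with $H_*(\T^2)$ by a continuation deforming $H$ to a $C^2$-small autonomous Hamiltonian, and apply the cup-length estimate in Floer homology --- again yielding at least three fixed points, and four in the nondegenerate case.

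The routine ingredients are Stage 1 and the topological bookkeeping ($\rho$ is a homomorphism; $\mathrm{cat}(\T^2) = 3$). The \textbf{main obstacle} is the hard analysis behind Stage 2: making the finite-dimensional reduction rigorous --- the $C^1$-bounds and coercivity of the reduced functional, the spectral splitting of the Hessian of the action form, and the a priori confinement of its critical points to a compact set --- or, in the Floer-theoretic route, the compactness and transversality for the Floer moduli spaces needed both to define $HF_*(f)$ and to compute it for $\T^2$.
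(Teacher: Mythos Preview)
The paper does not prove this theorem; it is simply quoted as the Conley--Zehnder theorem and immediately used to derive the corollary that a freely acting subgroup of $\Diff_\omega(\T^2)_0$ is abelian. So there is no ``paper's own proof'' to compare against.

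That said, your outline is the standard route and is essentially correct. Stage~1 is exactly right: the mean rotation vector $\rho$ is the flux homomorphism for $\T^2$, it kills commutators, and Banyaga's theorem identifies its kernel with $\mathrm{Ham}(\T^2)$. Stage~2 is a fair summary of Conley--Zehnder's original argument (saddle-point reduction on the loop space followed by Lusternik--Schnirelmann theory on a function asymptotically modeled on $\T^2 \times \R^N$), and your Floer alternative is the modern route. You have also correctly flagged where the real work lies: the analytic estimates in the reduction (or the compactness/transversality in Floer theory), which are substantial and not something one would reprove in this context. The only caveat is that your write-up is a proof \emph{sketch}, not a proof, of Stage~2 --- but that is appropriate here, since the paper itself treats the result as a black box.
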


\begin{cor}
Suppose $\G$ is a subgroup of $\Diff_\omega(\T^2)_0$ which acts freely.
Then $\G$ is Abelian.
\end{cor}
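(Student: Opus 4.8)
The plan is to deduce this directly from the Conley--Zehnder theorem (the Arnold Conjecture statement quoted just above) together with the freeness hypothesis; no new machinery is needed. First I would record the elementary closure facts: since $\Diff_\omega(\T^2)_0$ is a group, for any $f,g \in \G \subseteq \Diff_\omega(\T^2)_0$ the commutator $[f,g] = f^{-1}g^{-1}fg$ again lies in $\Diff_\omega(\T^2)_0$, and, being literally a product of elements of the form (element)(element)$^{-1}$, it lies in the commutator subgroup $[\Diff_\omega(\T^2)_0,\Diff_\omega(\T^2)_0]$ --- not merely in the commutator subgroup of the ambient $\Diff(\T^2)$. This is the only bookkeeping point that needs a moment's thought, and it is immediate.

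Next I would apply the Conley--Zehnder theorem to $[f,g]$: as an area-preserving diffeomorphism of $\T^2$ isotopic to the identity lying in the commutator subgroup, it has at least three fixed points; in particular $\Fix([f,g]) \ne \emptyset$. Now invoke freeness of the action of $\G$: by hypothesis no non-trivial element of $\G$ has a fixed point, while $[f,g] \in \G$ does have one, so $[f,g] = id$. Hence $f$ and $g$ commute, and since $f,g \in \G$ were arbitrary, $\G$ is Abelian.

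I do not expect any real obstacle here, as the content is entirely carried by the cited theorem; the proof is three lines once one observes that freeness forces every commutator in $\G$ to be trivial. One could add a sentence pointing out that this exactly parallels the proof of H\"older's theorem on $S^1$, where the analog of ``commutators have fixed points'' is supplied by the rotation-number homomorphism $\rho$ of a measure-preserving group, which kills commutators and hence endows each commutator with a fixed point, after which the same freeness argument applies verbatim.
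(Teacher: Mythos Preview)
Your proposal is correct and is essentially the same argument as the paper's: apply Conley--Zehnder to an arbitrary commutator $[f,g]\in\G$ to produce a fixed point, then use freeness to force $[f,g]=id$. The only difference is that you spell out explicitly why $[f,g]$ lies in the commutator subgroup of $\Diff_\omega(\T^2)_0$ (rather than merely of $\G$), which the paper leaves implicit.
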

\begin{proof}
If $f$ is a commutator in $\G$.  Then by the theorem
of Conley and Zehnder it has a fixed point. Since $\G$ acts
freely only the identity element has fixed points.  If all
commutators of $\G$ are the identity then $\G$ is abelian.
\end{proof}


\begin{defn}
A group $\N$ is called {\em nilpotent} provided 
when we define
\[
\N_0 = \N,\  \N_i = [\N,\N_{i-1}],
\]
there is
an $n \ge 1$ such that $\N_n = \{e\}.$  Note if $n = 1$ it is
Abelian.
\end{defn}

\begin{theorem}[Plante - Thurston \cite{PT}]
\label{theorem:C2interval}
Let $N$ be a  nilpotent subgroup of $\Diff^2(S^1)_0.$ 
Then $N$ must be Abelian.
\end{theorem}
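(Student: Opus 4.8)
The plan is to argue by contradiction: assume $N \subset \Diff^2(S^1)_0$ is nilpotent but not abelian, and derive an impossibility from the nilpotency filtration together with a growth/distortion estimate for $C^2$ diffeomorphisms of $S^1$. First I would reduce to the case where $N$ is generated by two elements $f, g$ with $[f,g]$ central and nontrivial — this is the smallest non-abelian quotient of the lower central series, i.e. we may pass to $N/N_2$ and lift, so that $h := [f,g] \neq e$ lies in the center of the group $\langle f, g \rangle$. The relation $[f^m, g^n] = h^{mn}$ (exactly as in the Heisenberg computations earlier in the paper) then shows that $h$ is a distortion element of $\langle f,g\rangle$, with $|h^{mn}| \le 2|f|m + 2|g|n = O(\sqrt{N})$ for $N = mn$.

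Next I would bring in the dynamics on $S^1$. Because $\langle f, g\rangle$ is nilpotent it is amenable, so there is an invariant probability measure $\mu$ on $S^1$, hence the rotation number is a homomorphism on $\langle f,g\rangle$; since $h$ is a commutator, $\rho(h) = 0$ and $\Fix(h) \neq \emptyset$. Now I would study the action on a single component $U$ of $S^1 \setminus \Fix(h)$. The element $h$ acts on $U \cong \R$ without fixed points, so it is conjugate (via the "translation number" coordinate built from $h$ itself) to a translation; the key point — and here is where $C^2$ is used — is the classical estimate that a $C^2$ diffeomorphism of $S^1$ with irrational rotation number, or more relevantly a $C^2$ diffeomorphism of an interval with no fixed points, controls the growth of centralizing elements. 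Concretely, I would use that in $\Diff^2$ the commutator relation $[f^m,g^n] = h^{mn}$ forces $h^{mn}$ to have derivative bounds (via the $C^2$ norm and the Muller–Tsuboi / Kopell-type arguments) that are incompatible with $h^{mn}$ being a large power of a nontrivial interval diffeomorphism — a nontrivial fixed-point-free $C^1$ diffeomorphism of an interval has $|(h^k)'|$ tending to $0$ or $\infty$ at the endpoints at a definite rate, so its word length cannot be $o(k)$ when it must be realized using the $C^2$-bounded generators $f,g$ restricted near an endpoint of $U$ that is fixed by both (à la Kopell's lemma, which is exactly the $C^2$ input).

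The main obstacle I expect is the endpoint analysis: one must show that the centralizer structure actually produces an interval on which $h$ acts freely while $f$ (or $g$) fixes an endpoint and is $C^2$-tangent enough there for Kopell's lemma to bite — in general $f$ and $g$ need not fix $\partial U$. I would handle this by the same propagation trick used in the proof of Witte's theorem above: if $h = [f,g]$ and $h$ has a fixed point in the interior of some component of $S^1 \setminus \Fix(\text{something})$, then iterating $f$ and $g$ pushes that fixed point to the endpoints, forcing $f,g$ to preserve $\partial U$ and act on $U$; then Kopell's lemma (the $C^2$ centralizer theorem on a half-open interval) says the centralizer of the fixed-point-free $C^2$ map $h|_U$ is itself fixed-point-free or trivial near the endpoint, contradicting $[f|_U, g|_U] = h|_U \neq e$ with $f,g$ commuting-modulo-center. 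Assembling: on every component of $S^1 \setminus \Fix(h)$ we get a contradiction, so $\Fix(h) = S^1$, i.e. $h = e$, contradicting our assumption. Hence $N$ is abelian. The only genuinely new ingredient beyond the bookkeeping already displayed in the paper is Kopell's $C^2$ lemma, which is precisely the feature that fails in $C^1$ (and indeed the theorem is false in $C^1$, matching Thurston's stability picture).
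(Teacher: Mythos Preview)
The paper does not prove this theorem; it is stated with a citation to Plante--Thurston \cite{PT} and serves only as background for the analogous surface result. There is therefore no proof in the paper to compare your proposal against.

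On the merits of your sketch: the overall architecture (reduce to a Heisenberg triple $f,g,h=[f,g]$ with $h$ central, use amenability to get $\rho(h)=0$ and $\Fix(h)\ne\emptyset$, then apply Kopell's lemma on a complementary interval) is a standard and correct route to the result. Two points deserve correction. First, the distortion discussion is a red herring here: the Heisenberg distortion of $h$ is the paper's tool for the \emph{surface} theorem, but Plante--Thurston on $S^1$ needs no word-length estimate --- Kopell's lemma alone carries the contradiction. Second, and more seriously, your justification that $f$ and $g$ may be taken to preserve a component $U$ of $S^1\setminus\Fix(h)$ is not right: the ``propagation trick'' from the Witte proof depends on the special cyclic commutator relations among the six elementary matrices $a_1,\dots,a_6$ and has no analogue for a single Heisenberg triple. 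The standard way to close this gap uses Denjoy's theorem (a second genuinely $C^2$ ingredient, alongside Kopell): since $f$ commutes with $h$ it preserves the nonempty proper closed set $\Fix(h)$; were $\rho(f)$ irrational, Denjoy would conjugate $f$ to an irrational rotation, which preserves no proper closed invariant set. Hence $\rho(f)$ and $\rho(g)$ are rational, suitable powers $f^p,g^q$ have common fixed points (containing $\supp\mu$), and on each complementary interval $[f^p,g^q]=h^{pq}$ is central and fixed-point-free, so Kopell's lemma forces it to be trivial --- the desired contradiction.
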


The result of Plante and Thurston requires the $C^2$ hypothesis as the 
following result shows.

\begin{theorem}[\cite{FF}]
Every finitely-generated, torsion-free nilpotent group is isomorphic
to a subgroup of $\Diff^1(S^1)_0$.
\end{theorem}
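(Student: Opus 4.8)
The plan is to prove the stronger assertion that every finitely generated torsion-free nilpotent group $N$ admits, for every closed interval $J$ and every $\varepsilon>0$, a faithful homomorphism $N\to\Diff^1_+(J)$ enjoying certain tameness properties near $\partial J$; gluing $0$ to $1$ in the case $J=[0,1]$ then produces a faithful orientation-preserving $C^1$ action on $S^1$, and since the orientation-preserving $C^1$ diffeomorphism group of $S^1$ is connected, the image lies in $\Diff^1(S^1)_0$. The argument is an induction on the Hirsch length $h$ of $N$; the reason for carrying the interval $J$ and the tolerance $\varepsilon$ along is precisely so that the inductive step can install copies of the action of a smaller group on many tiny subintervals. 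The structural input is that $N$ has a normal subgroup $M$ with $N/M\cong\Z$ and $M$ again finitely generated, torsion-free and nilpotent of Hirsch length $h-1$ --- take $M$ to be the kernel of a surjection $N\to N/[N,N]\to\Z$; here $M$ is finitely generated because finitely generated nilpotent groups are Noetherian. Since $\Z$ is free the extension splits, so $N\cong M\rtimes_\alpha\Z=\langle M,t\rangle$ with $\alpha(m)=tmt^{-1}$. Because $N$ is nilpotent, $\alpha$ is unipotent, and consequently $n\mapsto\alpha^{-n}(m)$ is a \emph{polynomial} map: the word length of $\alpha^{-n}(m)$ in a fixed generating set of $M$ is bounded by a polynomial in $|n|$. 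This polynomial (rather than exponential) growth along $\alpha$-orbits is what lets the whole scheme run in class $C^1$.

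The precise strengthened statement I would induct on is: for every closed interval $J$ and every $\varepsilon>0$ there is a faithful homomorphism $N\to\Diff^1_+(J)$ such that every element is first-order tangent to the identity at each point of its fixed set, $\partial J$ is contained in the fixed set of every element, and the image of each member of a fixed finite generating set of $N$ has $C^1$-norm less than $\varepsilon$. (Pinning down exactly which conditions are simultaneously strong enough to drive the construction and weak enough to be reproduced is part of the delicate bookkeeping.) The base case $h=1$, where $N\cong\Z$, is elementary: take an infinite-order $C^1$ diffeomorphism of $J$ that is the identity near $\partial J$, moves every interior point, and is $C^1$-close to the identity; its powers inherit the required properties.

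For the inductive step write $N=M\rtimes_\alpha\Z=\langle M,t\rangle$ as above. Fix a two-sided family of pairwise disjoint open subintervals $I_n\subset J$ ($n\in\Z$), in their natural order, accumulating only at $\partial J$ (so $I_n\to\inf J$ as $n\to-\infty$ and $I_n\to\sup J$ as $n\to+\infty$), with $\bigcup\overline{I_n}\cup\partial J=J$ and with consecutive lengths satisfying $|I_{n+1}|/|I_n|\to1$ --- for instance $|I_n|$ proportional to $(C+|n|)^{-2}$ for a large constant $C$. By the inductive hypothesis choose, for each $n$, an embedding $M\to\Diff^1_+(I_n)$ with tolerance $\varepsilon_n\to0$, and twist it by $\alpha^{-n}$: an element $m\in M$ is declared to act on $I_n$ as the chosen copy of $\alpha^{-n}(m)$. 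Let $\Phi(m)$ be the diffeomorphism of $J$ equal to these copies on the $I_n$ and equal to the identity elsewhere, and let $T$ be a $C^1$ diffeomorphism of $J$ carrying each $I_n$ onto $I_{n+1}$, first-order tangent to the identity at $\partial J$, and with no fixed point in the interior of $J$. A direct check gives that $\Phi$ is a homomorphism on $M$, that $T\,\Phi(m)\,T^{-1}=\Phi(\alpha(m))$, and hence that $(m,k)\mapsto\Phi(m)\,T^{k}$ is a homomorphism $N\to\Diff^1_+(J)$; it is injective because the factor $T^{k}$ is detected by how the map permutes the family $\{I_n\}$ while $m$ is already detected on a single $I_0$. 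It is $C^1$ at the two accumulation points of the $I_n$ for two independent reasons: the derivative of $T$ there is governed by the length ratios $|I_{n+1}|/|I_n|\to1$, while the $C^1$-norm of $\Phi(m)$ on $I_n$ equals that of the copy of $\alpha^{-n}(m)$ there, which is at most the word length of $\alpha^{-n}(m)$ times $O(\varepsilon_n)$, hence tends to $0$ because those word lengths grow only polynomially in $|n|$. One then checks that, with all the free parameters chosen small enough, the resulting action again satisfies the strengthened hypothesis for the prescribed $J$ and $\varepsilon$, which completes the induction.

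The main obstacle is exactly this $C^1$-smoothness at the accumulation points of the intervals $I_n$. One cannot simply ``make room'' for the action of $M$ by compressing it, because an affine rescaling does not change a diffeomorphism's $C^1$-norm and because the derivative of a diffeomorphism at a fixed point is a conjugacy invariant; so if the copies of the $M$-action inserted near $\partial J$ are not genuinely $C^1$-close to the identity, the glued map will not be $C^1$ there. The construction survives only because the twisting automorphism $\alpha$ is unipotent: along each $\alpha$-orbit the word length --- hence the $C^1$-size of the inserted copies, with tolerances $\varepsilon_n\to0$ --- grows only polynomially and so is outrun, while the interval lengths may be taken to decay only polynomially, which is what keeps the shift $T$ first-order tangent to the identity at $\partial J$. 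Arranging the inductive hypothesis so that it is strong enough to propagate cleanly through this step is the technical heart of the matter, and there is no slack to spare: the corresponding statement in class $C^2$ is false by the Plante--Thurston theorem (Theorem~\ref{theorem:C2interval}), so the argument relies genuinely on the fact that in $C^1$ there is no control on the modulus of continuity of the derivatives.
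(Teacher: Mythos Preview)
The paper itself does not prove this theorem; it is simply quoted from \cite{FF}.  Your sketch follows the Farb--Franks strategy in outline: induction on Hirsch length, a splitting $N\cong M\rtimes_\alpha\Z$, installation of twisted copies of the $M$-action on a bi-infinite string of shrinking subintervals, and the decisive observation that unipotence of $\alpha$ forces $|\alpha^{-n}(m)|$ to grow only polynomially, which is what makes a $C^1$ (rather than merely $C^0$) gluing conceivable.

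There is, however, a genuine gap in the mechanism as you describe it.  You choose the embeddings $\rho_n:M\to\Diff^1_+(I_n)$ \emph{independently}, each with its own tolerance $\varepsilon_n\to0$, and then assert that the shift $T$ satisfies $T\,\Phi(m)\,T^{-1}=\Phi(\alpha(m))$.  Unwinding this relation on $I_{n+1}$ yields
\[
(T|_{I_n})\circ\rho_n(g)\circ(T|_{I_n})^{-1}=\rho_{n+1}(g)\qquad\text{for all }g\in M,
\]
so $T|_{I_n}$ is forced to be a conjugacy between the two actions $\rho_n$ and $\rho_{n+1}$.  Since these were selected independently, there is no reason such conjugacies exist, let alone assemble into a globally $C^1$ map.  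Conversely, if you avoid this by using a \emph{single} model action $\rho$ transported affinely to every $I_n$ (so that $T$ is tautologically a conjugacy), then---as you yourself observe---affine rescaling does not change $\sup|f'-1|$, and the quantity $\sup|\rho(\alpha^{-n}(m))'-1|$ is of order $(1+\varepsilon_0)^{|\alpha^{-n}(m)|}-1$, which grows rather than decays.  Either way the argument as written does not close.

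Resolving this tension---making the tolerances genuinely shrink while keeping the actions on successive intervals coherently conjugate by a tame $T$---is precisely the technical heart of \cite{FF}, and it requires carrying more through the induction than a single action per tolerance: roughly, one needs a controlled \emph{family} of actions together with explicit conjugacies among them whose $C^1$ norms are themselves quantitatively bounded.  Your final paragraph correctly senses that the inductive hypothesis is delicate, but the sketch does not yet contain the device that reconciles the two competing requirements.
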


There is however an analogue of the Plante - Thurston Theorem for 
surface diffeomorphisms which preserve a measure.

\begin{theorem}[\cite{FH3}]
Let $\N$ be a 
nilpotent subgroup of $\Diff^1_\mu(S)_0$ with $\mu$ a probability
measure with $\supp(\mu) = S.$
If $S \ne S^2$ then $\N$ is Abelian, if $S = S^2$ then $\N$ is Abelian
or has an index 2 Abelian subgroup.
\end{theorem}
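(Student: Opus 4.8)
\noindent\emph{Proof strategy.} The plan is to assume that $\N$ is non-abelian, of nilpotency class $c\ge 2$, and to concentrate on the last non-trivial term $\N_{c-1}$ of its lower central series, which lies in the center of $\N$ and is generated by left-normed weight-$c$ commutators $z=[x_1,\dots,x_c]$ with $x_i\in\N$. Fix such a $z\ne e$. Because $\N$ has class $c$, weight-$c$ commutators are $\Z$-multilinear with values in $\N_{c-1}$, so $[x_1^n,\dots,x_c^n]=z^{n^c}$, and since the left-hand side has word length $O(n)$ in the finitely generated subgroup $\langle x_1,\dots,x_c\rangle$ we get $|z^m|=O(m^{1/c})$ along $m=n^c$. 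Consequently, if $z$ has infinite order it is a distortion element of $\N$ in the sense of the paper's definition (witnessed inside a finitely generated subgroup), and in every case $z$ is a non-trivial element of $[\N,\N]\cap Z(\N)$. The whole proof then amounts to showing that, when $\supp\mu=S$, no such $z$ exists — with the single exception of $S^2$, where $\N$ can be non-abelian only through an index-two subgroup.

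For $S$ of genus at least two there are no such elements at all: if $z$ has infinite order then Theorem~\ref{thm:distort}(1) gives $\supp\mu\subseteq\Fix(z)$, forcing $\Fix(z)=S$ and $z=\mathrm{id}$; if $z$ has finite order it is a non-trivial periodic element of $\Diff(S)_0$, but this group is torsion-free in genus at least two. Hence $\N_{c-1}=\{e\}$ and $\N$ is abelian. For $S=\T^2$ I would introduce the mean rotation-vector homomorphism $\rho_\mu\colon\Diff_\mu(\T^2)_0\to\R^2/\Z^2$, $\rho_\mu(f)=\int_{\T^2}(\tilde f(x)-x)\,d\mu$ for a lift $\tilde f$ to $\R^2$; it is a homomorphism precisely because every element of the group preserves $\mu$, so $\rho_\mu(z)=0$ as $z\in[\N,\N]$. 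A fixed-point theorem of Franks for $\mu$-preserving torus maps with vanishing mean rotation vector then gives $\Fix(z)\ne\emptyset$, hence $\Per(z)\ne\emptyset$. Now if $z$ is a distortion element, Theorem~\ref{thm:distort}(2) forces $z^n=\mathrm{id}$ for the common period $n$, contradicting infinite order; and if $z$ has finite order then, lifting to $\R^2$ where $z$ commutes with the deck group and using that a periodic orientation-preserving plane homeomorphism has a unique fixed point (Ker\'ekj\'art\'o), one finds $z$ can have no fixed point — again a contradiction. So $\N$ is abelian.

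For $S=S^2$ the argument is longer and its conclusion weaker. First I claim $z\in\N_{c-1}\setminus\{e\}$ has exactly two fixed points. It has at least one by Lefschetz. It cannot have three or more: if $z$ is a distortion element then Theorem~\ref{thm:distort}(3) with $n=1$ would give $\supp\mu\subseteq\Fix(z)=S^2$ and $z=\mathrm{id}$, while if $z$ has finite order it is topologically conjugate to an orthogonal rotation, which has exactly two. It cannot have exactly one: then $z$ restricts to a fixed-point-free orientation-preserving homeomorphism of $\R^2=S^2\setminus\Fix(z)$ preserving the finite measure $\mu|_{\R^2}$, which is non-zero because $\supp\mu=S^2$, and a fixed-point-free orientation-preserving plane homeomorphism has empty non-wandering set by Brouwer's translation theorem, contradicting Poincar\'e recurrence. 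So $\Fix(z)=\{p,q\}$, and since $z$ is central in $\N$ the whole group permutes $\{p,q\}$; let $\N'\le\N$ be the subgroup of index at most two fixing both $p$ and $q$, which acts on the open annulus $A=S^2\setminus\{p,q\}$ preserving a non-zero finite measure. I would then rerun the torus argument on $A$: the mean rotation number gives a homomorphism $\N'\to\R/\Z$, so any non-trivial $z'$ at the bottom of the lower central series of $\N'$ has a fixed point in $A$ and therefore at least three fixed points on $S^2$ — impossible by exactly the dichotomy above. Hence $\N'$ is abelian, and as $\N$ is non-abelian, $[\N:\N']=2$. (The exceptional conclusion is genuinely needed, as witnessed by the dihedral group of order eight realized inside $SO(3)\subset\Diff(S^2)_0$, which preserves spherical area.)

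I expect the main obstacle to be marshalling, in sufficient generality, the surface fixed-point theorems invoked above for homeomorphisms preserving only a full-support (not ergodic, not smooth) invariant probability measure: on $\T^2$, on $\R^2$ through Brouwer's theorem and Poincar\'e recurrence, and — most delicately — on the open annulus, where one additionally needs the mean-rotation-number map to be a homomorphism and ``zero mean rotation number'' to force a genuine fixed point for a finite invariant measure. A secondary point demanding care is the bookkeeping around distortion: verifying that the multilinear commutator identity really does exhibit $z$ as a distortion element inside a finitely generated subgroup, so that Theorem~\ref{thm:distort} applies. Beyond these, the argument is formal manipulation of the lower central series together with repeated quotation of Theorem~\ref{thm:distort}.
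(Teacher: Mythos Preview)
Your argument in the genus $\ge 2$ case is essentially the paper's own. The paper's sketch is marginally more direct: rather than invoking the $\Z$-multilinearity of weight-$c$ commutators, it simply observes that since $\N_m=[\N,\N_{m-1}]$ is non-trivial there is a single non-trivial commutator $f=[g,h]$ with $h\in\N_{m-1}$; as $f$ is central and $\Diff(S)_0$ is torsion-free in higher genus, $\langle g,h\rangle$ is a quotient of the Heisenberg group, so $[g^n,h^n]=f^{n^2}$ already exhibits the distortion. This avoids the (true but unnecessary) bookkeeping with longer left-normed commutators. Either way the finishing step is the same: Theorem~\ref{thm:distort}(1) plus $\supp(\mu)=S$ forces $f=\mathrm{id}$.

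The paper only sketches the higher-genus case, so your $T^2$ and $S^2$ arguments go beyond what is written there. Your outline is in the right spirit and correctly identifies the extra ingredients needed: a mean-rotation homomorphism to force $\Per(z)\ne\emptyset$ on $T^2$, Brouwer/recurrence to rule out a single fixed point on $S^2$, and a reduction to an open-annulus version of the argument. As you yourself flag, the genuine work (and the part the paper defers to \cite{FH3}) lies in making these fixed-point inputs rigorous for diffeomorphisms preserving only a full-support Borel probability measure; in particular, the step ``vanishing mean rotation number on the open annulus implies a fixed point'' for a merely finite invariant measure is not immediate from the classical Franks theorem and needs care.
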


\noindent
\begin{proof} 
We sketch the proof in the case $genus(S) > 1$.
Suppose 
\[
\N = \N_1 \supset \dots \supset \N_m \supset \{1\}
\]
is the lower central series of $\N.$ 
then $\N_{m}$ is in the center of $\N.$  If $m > 1$ there is a 
non-trivial $f \in  \N_{m}$ and elements $g,h$ with $f = [g,h].$
No non-trivial element of $\Diff^1(S)_0$ has
finite order since $S$ has genus $> 1.$
So $g,h$ generate a Heisenberg group and $f$ is distorted.
Theorem~\ref{thm:distort} above says 
$\supp(\mu) \subset \Fix(f),$ but $\supp(\mu) = S$
so $f = id.$  This is a contradiction unless $m = 1$ and $\N$ is abelian.
\end{proof}

\vfill\eject
\section{Detecting Non-Distortion}

Given a diffeomorphism which we wish to prove is not distorted there are 
three properties, any one of which will give us the desired conclusion.
In this section we will define these properties and show they are sufficient
to establish non-distortion.  These properties are
\begin{itemize}
\item exponential growth of length of a curve
\item linear displacement in the universal cover
\item positive {\em spread}
\end{itemize}

\begin{defn}
If the surface $S$ is provided with a Riemannian metric
a smooth closed curve $\tau \subset S$ has a well defined length
$l_S(\tau)$.  Define the {\em exponential growth rate} by
\[
\egr(f,\tau) = \liminf_{n \to \infty}\frac{\log(l_S(f^n(\tau))}{n}.
\]
\end{defn}

This is easily seen to be independent of the choice of metric.

\begin{prop} \label{egr} If $G$ is a finitely generated subgroup of $\Diff(S)_0$ 
and $f \in G$ is distorted in $G$   then $\egr(f, \tau) = 0$ for all closed curves $\tau$.
\end{prop}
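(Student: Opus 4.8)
The plan is to exploit sub-multiplicativity of the length function under the generators. Fix a finite generating set $\{g_1,\dots,g_k\}$ for $G$ and fix a Riemannian metric on $S$. Since each $g_j \in \Diff(S)_0$ is a fixed $C^1$ diffeomorphism of a compact manifold, there is a constant $C \ge 1$ such that $\|Dg_j^{\pm 1}\| \le C$ everywhere on $S$, for all $j$. Consequently, for any smooth closed curve $\sigma$ and any generator $g_j^{\pm 1}$ we have $l_S(g_j^{\pm 1}(\sigma)) \le C\, l_S(\sigma)$, because the length of the image is obtained by integrating $\|Dg_j^{\pm 1}(\dot\sigma)\|$ along $\sigma$. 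Iterating, if $h \in G$ has word length $|h| = m$ in the generators, then $l_S(h(\sigma)) \le C^{m}\, l_S(\sigma)$ for every smooth closed curve $\sigma$.

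Now apply this with $h = f^n$ and $\sigma = \tau$: writing $|f^n|$ for the word length of $f^n$, we obtain
\[
\log l_S(f^n(\tau)) \le |f^n|\,\log C + \log l_S(\tau).
\]
Dividing by $n$ and taking $\liminf$ as $n \to \infty$ gives
\[
\egr(f,\tau) = \liminf_{n \to \infty}\frac{\log l_S(f^n(\tau))}{n} \le \log C \cdot \liminf_{n \to \infty}\frac{|f^n|}{n} + \lim_{n\to\infty}\frac{\log l_S(\tau)}{n} = \log C \cdot 0 + 0 = 0,
\]
where the middle term vanishes because $f$ is distorted in $G$. Since $\egr(f,\tau) \ge 0$ always (lengths are positive, and $\log l_S(f^n(\tau))/n$ could a priori go to $-\infty$, but then the $\liminf$ would simply be $\le 0$; combined with the reverse we are forced to be careful), we conclude $\egr(f,\tau) \le 0$. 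To get equality with $0$ one notes that $f^{-1}$ is also a fixed $C^1$ diffeomorphism, so $l_S(\tau) = l_S(f^{-n}f^n(\tau)) \le C^{|f^{-n}|} l_S(f^n(\tau))$, and since $|f^{-n}| = |(f^n)^{-1}|$ also grows sublinearly along the same subsequence (word length is symmetric in inverses up to the same generating data), this forces $\log l_S(f^n(\tau))/n \to 0$ along that subsequence, hence $\egr(f,\tau) = 0$.

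The only genuinely delicate point is the very last sentence: one must be sure that the subsequence $n_i$ along which $|f^{n_i}|/n_i \to 0$ can be used \emph{simultaneously} for the curve not collapsing to zero length. This is handled exactly as above — apply the inverse bound along the same subsequence — so there is no real obstacle; the argument is essentially the observation that $f$ distorted forces both $|f^n|$ and $|f^{-n}|$ to have a common sublinear subsequence (indeed $|f^{-n}| = |f^n|$ in a symmetric generating set), which controls $l_S(f^n(\tau))$ from both sides. The metric-independence of $\egr$ was already noted, so the choice of metric and the resulting constant $C$ are immaterial. I expect the write-up to be only a few lines, with the chain-rule/arc-length estimate $l_S(g(\sigma)) \le \|Dg\|_\infty\, l_S(\sigma)$ as the single computational ingredient.
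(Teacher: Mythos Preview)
Your core argument is correct and is exactly the paper's proof: bound $\|Dg_i\|\le C$ on the compact surface, deduce $l_S(f^n(\tau))\le C^{|f^n|}l_S(\tau)$, take logarithms, divide by $n$, and use distortion to get $\egr(f,\tau)\le 0$. The paper's proof is literally your first displayed inequality and stops there.

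Your last two paragraphs, trying to upgrade $\le 0$ to $=0$, are superfluous and contain a small slip: showing that $\log l_S(f^{n_i}(\tau))/n_i\to 0$ along the distortion subsequence only gives $\liminf\le 0$, not $\liminf\ge 0$; a subsequential limit never bounds a $\liminf$ from below. The paper does not attempt this direction at all---only the inequality $\egr(f,\tau)\le 0$ is proved, and only that is used later (the point is the contrapositive: positive exponential growth rate implies non-distortion). So drop the extra discussion and your proof matches the paper's line for line.
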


\begin{proof} Choose generators $g_1,\dots,g_j$ of $G$.  There exists  $C > 0$ such that $||Dg_i|| < C$ 
for all $i$.  Thus $l_S(g_i(\tau)) \le C l_S(\tau)$ for all $\tau$ and all $i$.  It follows that  
\[
\liminf_{n \to \infty}\frac{\log(l_S(f^n(\tau))}{n} \le \liminf_{n \to \infty}\frac{\log(l_S(\tau)) 
+ \log(C) |f^n|}{n} = 0.
\]
\end{proof}

\begin{defn}
Assume that $f \in \Homeo(S)_0$ and that $S \ne S^2$.   A metric $d$ on $S$ lifts to an equivariant metric $\ti d$ on the universal cover $\ti S$.  We say that $f$ has {\it linear displacement} if either of the following conditions hold.
\begin{enumerate}
\item $S \ne T^2$, $\ti f$ is the identity lift  and there exists $\ti x \in \ti S = H$ such that 
$$
\liminf_{n \to \infty} \frac{\ti d(\ti f^n(\ti x),\ti x)}{n} > 0.
$$
\item $S = T^2$ and there exist $\ti f$ and  $\ti x_1,\ti x_2 \in \ti S = \R^2$ such that
$$
\liminf_{n \to \infty} \frac{\ti d(\ti f^n(\ti x_1),\ti f^n(\ti x_2))}{n} > 0.
$$
\end{enumerate}
\end{defn}

\begin{prop} \label{no linear displacement} If $G$ is a finitely generated subgroup of $\Homeo(S)_0$ and $f \in G$ is distorted in $G$   then $f$ does not have linear displacement. 
\end{prop}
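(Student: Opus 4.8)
The plan is to mimic the proof of Proposition~\ref{egr} very closely, replacing the curve-length estimate with a displacement estimate in the universal cover. First I would fix generators $g_1,\dots,g_j$ of $G$ and lift each to a homeomorphism $\ti g_i$ of $\ti S$; since $S$ is compact, each $\ti g_i$ moves points a bounded amount in the equivariant metric, say $\ti d(\ti g_i(\ti y),\ti y) \le C$ for all $\ti y \in \ti S$ and all $i$, with the same bound $C$ also valid for the inverses $\ti g_i^{-1}$. The triangle inequality then gives, for any word $w$ of length $\ell$ in the generators and any lift $\ti w$ of it built from the chosen generator-lifts, the bound $\ti d(\ti w(\ti y),\ti y) \le C\ell$ for all $\ti y$.

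The next step is to relate the iterates $\ti f^n$ to these word-lifts. Write $f = w$ as a word of length $|f^n|$ in the $g_i$'s realizing $f^n$; the lift of this word obtained by composing the corresponding $\ti g_i^{\pm 1}$ is \emph{a} lift of $f^n$, hence equals $\ti f^n \cdot \gamma_n$ for some deck transformation $\gamma_n$ — but in fact, since we are free to choose the lift of $f$ coherently and the group $\Diff(S)_0$-elements lift compatibly along the homomorphism, the relevant lift of $f^n$ is exactly $(\ti f)^n$ (this is the only place one must be slightly careful). In case~(1), where $\ti f$ is the identity lift, this is automatic: the identity lift is characterized as the one commuting with deck transformations, and a product of identity lifts of the $g_i$'s is the identity lift of the product, so the word-lift of $f^n$ \emph{is} $\ti f^n$. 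Thus $\ti d(\ti f^n(\ti x),\ti x) \le C|f^n|$ for all $\ti x$, and dividing by $n$ and taking $\liminf$ gives $0$ by distortion, contradicting linear displacement in case~(1). In case~(2), $S = T^2$, the deck group $\mathbb{Z}^2$ is central in the group of all lifts, so for any fixed choice of lift $\ti f$ one has $\ti f^n = (\text{word-lift}) \cdot \tau_n$ with $\tau_n \in \mathbb{Z}^2$ a translation; applying this to the pair $\ti x_1,\ti x_2$, the translation cancels in the difference, so $\ti d(\ti f^n(\ti x_1),\ti f^n(\ti x_2)) = \ti d\big((\text{word-lift})(\ti x_1),(\text{word-lift})(\ti x_2)\big) \le \ti d(\ti x_1,\ti x_2) + 2C|f^n|$, and again dividing by $n$ yields $0$, contradicting linear displacement in case~(2).

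The main obstacle, and the step deserving the most care, is the bookkeeping of lifts: ensuring that the lift of the product equals the product of the chosen lifts, and that this coincides with the iterate $\ti f^n$ of the particular lift $\ti f$ appearing in the definition of linear displacement (up to a deck transformation that is harmless in each case). This is where the hypotheses ``$\ti f$ is the identity lift'' in case~(1) and ``$S = T^2$'' (so the deck group is central, hence translations that cancel in differences) in case~(2) are used — they are precisely what makes the bounded-displacement estimate for generators transfer to a bounded-displacement estimate for $\ti f^n$ in terms of $|f^n|$. Once that is set up, the contradiction with distortion is immediate and formally identical to the proof of Proposition~\ref{egr}.
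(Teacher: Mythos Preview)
Your proposal is correct and, for case~(1), essentially identical to the paper's argument: the paper phrases it as ``the identity lifts $\{\ti g : g \in G\}$ form a subgroup $\ti G$ and $\ti f$ is a distortion element in $\ti G$,'' which is exactly your observation that a product of identity lifts is the identity lift of the product; the displacement bound $\ti d(\ti f^n(\ti x),\ti x)\le C|f^n|$ and the division by $n$ are then the same in both. The paper presents only the genus $>1$ case and refers to \cite{FH3} for the torus; your treatment of case~(2) via the centrality of the deck group $\Z^2$ (so the ambiguous deck translation cancels when comparing two points) is a correct way to finish that case.
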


\begin{proof} We present only the case that $S$ has genus $> 1.$  For the full result
see \cite{FH3}.
In this case the identity lifts $\{\ti g: g \in G\}$ form a subgroup
$\ti G$ and $\ti f$ is a distortion element in $\ti G$.  Let $d$ be
the distance function of a Riemannian metric on $S$ and let $\ti d$ be
its lift to $H$.  For generators $g_1,\dots,g_j$ of $G$ there exists
$C > 0$ such that $\ti d(\ti g_i(\ti x),\ti x) < C$ for all $\ti x \in
H$ and all $i$.  It follows that
\[
\liminf_{n \to \infty} \frac{\ti d(\ti f^n(\ti x),\ti x)}{n} \le
\liminf_{n \to \infty} C \frac{|f^n|}{n} = 0.
\]
\end{proof}

The final ingredient we use to detect non-distortion is {\em spread} which 
we now define.  The following few paragraphs are taken almost verbatim from
\cite{FH3}.

Suppose that $f \in \Diff(S)_0$, that $\gamma \subset S$ is a smoothly
embedded path with distinct endpoints in $\Fix(f)$ and that $\beta$ is
a simple closed curve that crosses $\gamma$ exactly once. We want to
measure the relative rate at which points move \lq across $\gamma$ in
the direction of $\beta$\rq.

Let $A$ be the endpoint set of $\gamma$ and let $M$ be the surface
with boundary obtained from $S$ by blowing up both points of $A$. We
now think of $\gamma$ as a path in $M$ and of $\beta$ as a simple
closed curve in $M$.  Assume at first that $S \ne S^2$ and that $M$ is
equipped with a hyperbolic structure.  We denote the universal covering
space of $S$ by $H$ and the ideal points needed to compactify it by
$\sinfty.$  Choose non-disjoint extended lifts $\ti \beta \subset H \cup \sinfty$
and $\ti \gamma \subset H \cup \sinfty$ and let $T : H \cup \sinfty
\to H \cup \sinfty$ be the covering translation corresponding to $\ti
\beta$, i.e. $T^{\pm}$ are the endpoints of $\ti \beta$.  Denote
$T^i(\ti \gamma)$ by $\ti \gamma_i$. Each $\ti \gamma_i$ is an
embedded path in $H \cup \sinfty$ that intersects $\sinfty$ exactly in
its endpoints.  Moreover, $\ti \gamma_i$ separates $\ti \gamma_{i-1}$
from $\ti \gamma_{i+1}$.
 
An embedded smooth path $\alpha \subset S$ whose interior is disjoint
from $A$ can be thought of as a path in $M$. For each lift $\ti \alpha
\subset H \cup \sinfty$, there exist $a < b$ such that $\ti \alpha
\cap \ti \gamma_i \ne \emptyset$ if and only if $a < i < b$.  Define
$$
\ti L_{\ti \beta, \ti \gamma}(\ti \alpha) = \max\{0,b-a-2\}
$$
and  
$$
L_{\beta,\gamma}(\alpha) = \max\{\ti L_{\ti \beta,\ti \gamma}(\ti \alpha)\}
$$
 as $\ti \alpha$ varies over all lifts of $\alpha$.  

Suppose now that $S= S^2$ and hence that $M$ is the closed annulus.
In this case $\ti M$ is identified with $\R \times [0,1]$, $T(x,y) =
(x+1,y)$ and $\ti \gamma$ is an arc with endpoints in both components
of $\partial \ti M$.  With these modifications, $L_{\beta,\gamma}(\alpha)$ is
defined as in the $S \ne S^2$ case.

There is an equivalent definition of $L_{\beta,\gamma}(\alpha)$ that
does not involve covers or blowing up.  Namely,
$L_{\beta,\gamma}(\alpha)$ is the maximum value $k$ for which there
exist subarcs $\gamma_0 \subset \gamma$ and $\alpha_0 \subset \alpha$
such that $\gamma_0\alpha_0$ is a closed path that is freely homotopic
relative to $A$ to $\beta^k$. We allow the possibility that $\gamma$
and $\alpha$ share one or both endpoints.  The finiteness of
$L_{\beta,\gamma}(\alpha)$ follows from the smoothness of the arcs
$\alpha$ and $\gamma$.

\begin{defn}\label{defn:spread}
Define the {\em spread} of
$\alpha$ with respect to $f, \beta$ and $\gamma$ to be 
\[
\sigma_{f, \beta,\gamma}(\alpha) 
=  \liminf_{n \to \infty} \frac{L_{\beta,\gamma}( f^n \circ \alpha)}{n}.
\]
\end{defn}

Note that if $\gamma'$ is another smoothly embedded arc that crosses $\beta$
exactly once and that has the same endpoints as $\gamma$ then
$\sigma_{f, \beta,\gamma}(\alpha)= \sigma_{f, \beta,\gamma'}(\alpha)$
for all $\alpha$.  This follows from the fact that $\ti \gamma' $ is
contained in the region bounded by $\ti \gamma_j$ and $\ti
\gamma_{j+J}$ for some $j$ and $J$ and hence
$|L_{\beta,\gamma'}(\alpha) -L_{\beta,\gamma}(\alpha)| \le 2J$ for all
$\alpha$.

\begin{prop} \label{prop:spread}  
If $G$ is a finitely generated subgroup of $\Diff(S)_0$ and $f \in G$ is distorted in $G$   then $\sigma_{f,\beta,\gamma}(\alpha) = 0$ for all $\alpha,\beta,\gamma$.
\end{prop}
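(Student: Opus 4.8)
The plan is to mirror the proofs of Propositions~\ref{egr} and~\ref{no linear displacement}. Fix a finite generating set $\{g_1,\dots,g_j\}$ of $G$, closed under inverses. Everything reduces to the additive estimate that there is a constant $C = C(g_1,\dots,g_j,\beta,\gamma) > 0$ with
$$
L_{\beta,\gamma}(g_i\circ\alpha) \;\le\; L_{\beta,\gamma}(\alpha) + C
$$
for every $i$ and every admissible arc $\alpha$. Granting this, write $f^n = s_1 s_2\cdots s_{\ell_n}$ with $\ell_n = |f^n|$ and each $s_k$ a generator; applying the estimate $\ell_n$ times gives $L_{\beta,\gamma}(f^n\circ\alpha) \le L_{\beta,\gamma}(\alpha) + C\,|f^n|$, hence
$$
\sigma_{f,\beta,\gamma}(\alpha) = \liminf_{n\to\infty}\frac{L_{\beta,\gamma}(f^n\circ\alpha)}{n} \le \liminf_{n\to\infty}\frac{L_{\beta,\gamma}(\alpha) + C\,|f^n|}{n} = C\liminf_{n\to\infty}\frac{|f^n|}{n} = 0 ,
$$
and since $\sigma_{f,\beta,\gamma}(\alpha) \ge 0$ always, it is $0$.

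To establish the additive estimate I would work in the covering space used to define $L_{\beta,\gamma}$, and treat the case that $S$ has genus greater than one ($S = T^2$ being identical with $\R^2$ in place of $H$ and a straight $\ti\gamma$). The key point is that $g_i \in \Homeo(S)_0$, being isotopic to the identity on the compact surface $S$, gives rise to an identity lift $\ti g_i$ of $H \cup \sinfty$ that displaces every point of $H$ by at most a fixed hyperbolic distance $C_i$ and fixes $\sinfty$ pointwise (and likewise for $\ti g_i^{-1}$). Pick a lift of $g_i\circ\alpha$ realizing $L_{\beta,\gamma}(g_i\circ\alpha) = k$; it has the form $\ti g_i(\ti\alpha)$ for a lift $\ti\alpha$ of $\alpha$, and it meets precisely the translates $\ti\gamma_m$ with $a < m < b$, where $k = \max\{0,b-a-2\}$. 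Assuming $k > 0$, apply $\ti g_i^{-1}$: the arc $\ti\alpha$ meets $\ti g_i^{-1}(\ti\gamma_{a+1})$ and $\ti g_i^{-1}(\ti\gamma_{b-1})$, each of which lies in the $C_i$-neighborhood of $\ti\gamma_{a+1}$, respectively $\ti\gamma_{b-1}$. Thus $\ti\alpha$ meets both of those neighborhoods, and adjoining a geodesic segment of length $\le C_i$ to each end of the subarc of $\ti\alpha$ between the corresponding intersection points produces a path $\ti\delta$ from a point of $\ti\gamma_{a+1}$ to a point of $\ti\gamma_{b-1}$. Since the $\ti\gamma_m$ are pairwise disjoint and $\ti\gamma_m$ separates $\ti\gamma_{m-1}$ from $\ti\gamma_{m+1}$, the connected set $\ti\delta$ meets $\ti\gamma_m$ for every $a+1 \le m \le b-1$. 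On the other hand, choosing $\gamma$ geodesic in the hyperbolic structure on $M$ --- legitimate by the observation following Definition~\ref{defn:spread} --- consecutive translates $\ti\gamma_m$ and $\ti\gamma_{m+1} = T(\ti\gamma_m)$ are a definite hyperbolic distance apart (the translation length of $T$ along its axis), so a path of length $\le C_i$ crosses at most some fixed number $N_i$ of them. Deleting the two adjoined geodesic caps from $\ti\delta$ therefore removes at most $2N_i$ values of $m$, leaving the subarc of $\ti\alpha$ --- hence $\ti\alpha$ itself --- meeting a block of at least $(b-a-1) - 2N_i$ translates $\ti\gamma_m$. This forces $L_{\beta,\gamma}(\alpha) \ge \ti L_{\ti\beta,\ti\gamma}(\ti\alpha) \ge k - 2N_i$, so $C := \max_i 2N_i$ works.

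The step I expect to be the real obstacle is this last geometric input: quantifying how a bounded-displacement lift can disrupt the combinatorics of the nested family $\{\ti\gamma_m\}$ --- concretely, that a path of bounded hyperbolic length meets only boundedly many translates. Making it precise requires putting $\gamma$ (and, if one wishes, $\beta$) in good position, in particular ruling out or perturbing away the degenerate case in which distinct translates $\ti\gamma_m$ share an ideal endpoint, and it also requires some care in passing from $g_i$ on $S$ to a lift on the cover of $M$, since $g_i$ need not preserve the endpoint set $A$. Finally, the case $S = S^2$ demands a parallel but separate argument in the universal cover $\R \times [0,1]$ of the blow-up annulus $M$, further complicated by this same failure of $g_i$ to preserve $A$; these matters are treated in detail in \cite{FH3}.
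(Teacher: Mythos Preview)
Your reduction to an additive estimate and the final $\liminf$ computation are exactly the paper's argument: Lemma~\ref{T-len growth} is precisely your inequality $L_{\beta,\gamma}(f^n\circ\alpha)\le L_{\beta,\gamma}(\alpha)+C|f^n|$, and the deduction of $\sigma_{f,\beta,\gamma}(\alpha)=0$ from it is identical.

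Where you diverge from the paper is in the route to the additive estimate, and this is where the gap lies. Your bounded-displacement argument presupposes that $\ti g_i$ acts on the cover in which the translates $\ti\gamma_m$ live. But that cover is the universal cover of $M$, the blow-up of $S$ at $A=\partial\gamma$, and $g_i$ does not act on $M$ at all unless it fixes $A$; your identity lift of $g_i$ lives over $S$, where the lifts of $\gamma$ are compact arcs with no separation property. You flag this yourself in the last paragraph, but it is not a detail to be tidied up --- it is the entire difficulty, and your displacement argument does not survive it as written.

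The paper's mechanism for this is different in kind. Rather than lifting $g_i$, it pre- and post-composes $g$ with diffeomorphisms $h_\eta$ carrying $\gamma$ to short geodesic arcs between the moved endpoints, so that the composite $g_{x',y'}=h_{\eta'}^{-1}\circ g\circ h_\eta$ genuinely fixes $A$ and hence acts on $M$ and its cover (Lemma~\ref{lem: gamma-len}). The uniformity over all endpoint pairs $(x',y')$ --- which is what makes the constant $K(g)$ independent of where the iterates of $f$ send $A$ --- is extracted from a bound on algebraic intersection numbers of images of short geodesic segments (Lemma~\ref{lem: geodesic seg}). Your hyperbolic-distance heuristic does capture the right intuition once one is on the cover of $M$, but getting there uniformly is the substance of Lemmas~\ref{lem: geodesic seg} and~\ref{lem: gamma-len}, and that is what your proposal is missing.
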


This proposition is proved via three lemmas which we now state.  For proofs see
\cite{FH3}.

\begin{lem} \label{lem: geodesic seg}  
Suppose that $g \in \Diff(S)$ and that 
$\eta$ and $\eta'$ are smoothly embedded geodesic arcs in
$S$ with length at most $D$.  There exists a constant $C(g)$, independent of $\eta$ and
$\eta'$ such that the absolute value of the
algebraic intersection number of any subsegment
of $g(\eta)$ with $\eta'$ is less than $C(g).$
\end{lem}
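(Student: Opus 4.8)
The plan is to pass to the universal cover, where the estimate reduces, for each relevant covering translate, to the elementary fact that an \emph{embedded} arc meets a \emph{separating} curve in an algebraically trivial way, up to a single sign. First I would fix a subsegment $\alpha$ of $g(\eta)$ and assume, as one may in the situations where the lemma is applied, that $g(\eta)$ meets $\eta'$ transversely and in the interiors of both arcs, so that $\alpha\cdot\eta'$ is the usual signed count of crossings. Take $S\ne S^2$ first and let $\ti S$ be the universal cover, which is homeomorphic to $\R^2$. Fix a lift $\ti\alpha\subset\ti S$ of $\alpha$. Since $g$ is a diffeomorphism and $\eta$ is embedded, $g(\eta)$, hence $\alpha$, hence $\ti\alpha$, is an embedded arc, and $\ti\alpha$ has length at most $L:=\|Dg\|_{\infty}D$. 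With $\ti\eta'$ a fixed lift of $\eta'$ and $\Gamma$ the deck group, the standard relation between intersection numbers upstairs and downstairs gives
\[
\alpha\cdot\eta'\;=\;\sum_{T\in\Gamma}\ti\alpha\cdot T\ti\eta',
\]
a finite sum since $\ti\alpha$ is compact.

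Next I would bound the number of nonzero terms by a constant $N(g)$. If $T\ti\eta'\cap\ti\alpha\ne\emptyset$, choose an endpoint $p_0$ of $\ti\eta'$ and the initial point $x_0$ of $\ti\alpha$; then the lifted metric satisfies $\ti d(Tp_0,x_0)\le L+D$. Since $S$ is closed, distinct points of the orbit $\Gamma p_0$ are separated by a positive constant depending only on $S$ (half the systole), so a volume-packing estimate bounds the number of such $T$ by a constant $N(g)$ depending only on $L$, $D$ and $S$ — hence only on $g$, as $D$ and $S$ are fixed. The torus case is identical with $\ti S=\R^2$ and $\Gamma$ a lattice, and for $S=S^2$ there is a single term ($\ti S=S^2$, $\Gamma$ trivial).

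It remains to show $|\ti\alpha\cdot T\ti\eta'|\le 1$ for each of these translates, and this is the step I expect to carry the proof. Here $\ti\alpha$ is a compact embedded arc in $\ti S\cong\R^2$, so $\ti S\setminus\ti\alpha$ is connected; consequently, by standard surface topology, the embedded arc $T\ti\eta'$ can be completed to a properly embedded line $\hat\gamma_T\subset\ti S$ by attaching at its two endpoints embedded rays that run out to infinity and are chosen disjoint from $\ti\alpha$, from each other, and from the interior of $T\ti\eta'$. Then $\hat\gamma_T\cap\ti\alpha=T\ti\eta'\cap\ti\alpha$, and $\hat\gamma_T$ separates $\R^2$ into two components. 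As one traverses $\ti\alpha$ from one endpoint to the other, the component containing the moving point changes at each crossing of $\hat\gamma_T$, so the signs of successive crossings alternate and hence sum to $0$ or $\pm 1$; thus $|\ti\alpha\cdot T\ti\eta'|=|\ti\alpha\cdot\hat\gamma_T|\le 1$. Summing over the at most $N(g)$ relevant translates yields $|\alpha\cdot\eta'|\le N(g)<N(g)+1=:C(g)$, independent of $\alpha$, $\eta$ and $\eta'$. When $S=S^2$ one completes $T\ti\eta'$ to a simple closed curve avoiding $\ti\alpha$ in place of a properly embedded line, and the same argument applies.

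The genuine content is concentrated in that last step: an embedded arc cannot disconnect the universal cover, which is exactly what lets one close up $\eta'$ into a separating curve missing $\ti\alpha$, after which the alternation of signs along $\ti\alpha$ forces the bound $1$. It is worth noting that geodesicity of $\eta$ and $\eta'$ plays no real role — only that they are embedded and of length at most $D$ — and that the statement is genuinely about the \emph{algebraic} count, since a bounded-length $C^1$ arc can meet a geodesic in infinitely many points. The one routine point that needs a word of care is the transversality and general-position reduction made at the outset.
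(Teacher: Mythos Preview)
The paper does not prove this lemma; it is one of three lemmas stated without proof, with a reference to \cite{FH3} for the arguments. So there is no proof in the paper to compare against, and I can only assess your argument on its own merits.

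Your proof has a genuine gap at exactly the step you flag as carrying the content. The claim that $|\ti\alpha\cdot T\ti\eta'|\le 1$ for each deck translate is false, and the construction you give to justify it cannot always be carried out. You argue that since $\ti S\setminus\ti\alpha$ is connected you may attach to the endpoints of $T\ti\eta'$ rays to infinity that avoid $\ti\alpha$. That much is true. But for $\hat\gamma_T$ to be an \emph{embedded} line the rays must also avoid the interior of $T\ti\eta'$, and it is this extra requirement that can fail: an endpoint of $T\ti\eta'$ may lie in a bounded component of $\ti S\setminus(\ti\alpha\cup T\ti\eta')$. Concretely, in $\R^2$ take $T\ti\eta'$ to be the segment from $(-1,0)$ to $(1,0)$ and let $\ti\alpha$ be the embedded polygonal arc
\[
(0,1)\to(0,-1)\to(2,-1)\to(2,1)\to(\tfrac12,1)\to(\tfrac12,-\tfrac12).
\]
Then $\ti\alpha$ crosses $T\ti\eta'$ exactly twice, at $(0,0)$ and $(\tfrac12,0)$, both with the same sign, so $\ti\alpha\cdot T\ti\eta'=\pm 2$. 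The endpoint $(1,0)$ of $T\ti\eta'$ is enclosed by the simple closed curve formed from the sub-arc of $\ti\alpha$ between its two crossings together with the sub-arc of $T\ti\eta'$ between them; hence no ray from $(1,0)$ can reach infinity while avoiding both $\ti\alpha$ and the interior of $T\ti\eta'$. By letting $\ti\alpha$ wind repeatedly around the endpoint $(1,0)$ one produces embedded arcs of bounded length with $|\ti\alpha\cdot T\ti\eta'|$ as large as one likes, so no uniform per-translate bound of the kind you assert can hold.

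Your closing remark that ``geodesicity of $\eta$ and $\eta'$ plays no real role'' is in fact the warning sign: your argument, if it worked, would prove the bound for \emph{arbitrary} embedded arcs $\alpha$ of length at most $\|Dg\|_\infty D$, and the example above shows that this stronger statement is simply false. A correct proof must use that $\alpha$ is not an arbitrary embedded arc but a subsegment of $g(\eta)$ with $\eta$ geodesic (equivalently, after applying $g^{-1}$, one is intersecting a \emph{geodesic} segment with the $g^{-1}$-image of a geodesic segment); the geodesic on one side can then be extended to a complete geodesic in $\ti S$, which \emph{does} separate, and this is where the alternation argument gains traction. The packing step bounding the number of relevant translates is fine; it is the per-translate estimate that needs to be redone.
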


Let $\gamma$ be a fixed oriented geodesic arc in $S$ with length at
most $D$, let $A =\{x,y\}$ be its endpoint set and let $M$ be the
surface with boundary obtained from $S \setminus A$ by blowing up $x$
and $y$.  For each ordered pair $\{x',y'\}$ of distinct points in $S$
choose once and for all, an oriented geodesic arc $\eta = \eta(x',y')$
of length at most $D$ that connects $x'$ to $y'$ and choose $h_{\eta}
\in \Diff(S)_0$ such that $h_{\eta} (\gamma) = \eta,\ h_{\eta} (x) =
x',\ h_{\eta} (y) = y'.$ There is no obstruction to doing this since
both $\gamma$ and $\eta$ are contained in disks.  If $x = x'$ and
$y=y'$ we choose $\eta =\gamma$ and $h_{\eta} = id.$

Given $g \in \Diff(S)$ and an ordered pair $\{x',y'\}$ of distinct
points in $S$, let $\eta = \eta(x',y')$, $\eta' = \eta(g(x'),g(y'))$
and note that $g_{x',y'} := h_{\eta'}^{-1} \circ g \circ h_\eta$
pointwise fixes $A$. The following lemma asserts that although the
pairs $\{x',y'\}$ vary over a non-compact space, the elements of
$\{g_{x',y'}\}$ exhibit uniform behavior from the point of view of
spread.

\begin{lem} \label{lem: gamma-len}  With notation as above, the following hold for all $g \in \Diff(S)$.   
\begin{enumerate}
\item   There exists a constant $C(g)$  such that 
\[
L_{\beta,\gamma}(g_{x',y'}(\gamma)) \le C(g) \mbox{ for all } \beta \mbox{ and all } x',y'.   
\]
\item There exists a constant $K(g)$ such that 
\[
L_{\beta,\gamma}(g_{x',y'} (\alpha)) \le L_{\beta,\gamma}(\alpha) + K(g) \mbox{ for all } \beta, \mbox{  all } \alpha \mbox{ and all }  x',y'. 
 \]
\end{enumerate}
\end{lem}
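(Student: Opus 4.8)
The plan is to deduce both statements from Lemma~\ref{lem: geodesic seg}, exploiting the fact that the conjugates $g_{x',y'}$ all pointwise fix the endpoint set $A$ and that their ``linear parts'' are controlled uniformly in $\{x',y'\}$. First I would unwind the combinatorial definition of $L_{\beta,\gamma}$: by the intersection-number description given before Definition~\ref{defn:spread}, $L_{\beta,\gamma}(\alpha)$ is, up to an additive error of at most $2$, the maximal algebraic intersection number of a subarc of a lift of $\alpha$ with the translates $\ti\gamma_i = T^i(\ti\gamma)$ of a lift of $\gamma$. Equivalently, working downstairs in $M$, it measures how many times $\alpha$ wraps across $\gamma$ in the $\beta$-direction. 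Thus it suffices to bound algebraic intersection numbers of subsegments of $g_{x',y'}(\alpha)$ with $\gamma$ (and with the arcs $\eta(x',y')$, $\eta(g(x'),g(y'))$ that intervene when we pass between $g$ and its conjugate).

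For part (1): write $g_{x',y'}(\gamma) = h_{\eta'}^{-1}\bigl(g(h_\eta(\gamma))\bigr) = h_{\eta'}^{-1}(g(\eta))$. Now $g(\eta)$ is the image under the single diffeomorphism $g$ of a geodesic arc $\eta$ of length at most $D$; by Lemma~\ref{lem: geodesic seg} the algebraic intersection number of any subsegment of $g(\eta)$ with any geodesic arc of length $\le D$ — in particular with $\gamma$ — is bounded by a constant $C_0(g)$ independent of $\eta$ and hence of $\{x',y'\}$. The remaining issue is that we must then apply $h_{\eta'}^{-1}$, and $h_{\eta'}$ varies with $\{x',y'\}$. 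Here the point is that $h_{\eta'}$ is isotopic to the identity and, more importantly, its effect on the \emph{homotopy class rel $A$} of an arc is what matters for $L_{\beta,\gamma}$; since $h_{\eta'}$ carries $\gamma$ to a geodesic $\eta'$ of length $\le D$ and fixes $A$ setwise, conjugating by it changes the wrapping count only by a controlled amount. Concretely I would bound $L_{\beta,\gamma}(h_{\eta'}^{-1}(\delta))$ in terms of $L_{\beta,\gamma'}(\delta)$ for the geodesic $\gamma' = \eta'$ with the same endpoints, use the remark following Definition~\ref{defn:spread} that changing $\gamma$ to another arc $\gamma'$ with the same endpoints crossing $\beta$ once changes $L$ by a bounded amount, and absorb everything into a single constant $C(g)$.

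For part (2): the estimate $L_{\beta,\gamma}(g_{x',y'}(\alpha)) \le L_{\beta,\gamma}(\alpha) + K(g)$ is the ``quasi-additivity'' of wrapping number under a diffeomorphism with uniformly bounded distortion of the relevant arcs. I would argue as follows. If $\ti\alpha$ is a lift of $\alpha$ meeting $\ti\gamma_a,\dots,\ti\gamma_b$, then a lift of $g_{x',y'}(\alpha)$ meets $\ti\gamma_i$ only for $i$ in an interval whose length exceeds $b-a$ by at most a constant; this is exactly the content of Lemma~\ref{lem: geodesic seg} applied to the diffeomorphism $g$ acting on the geodesic realizations of $\alpha$ and $\gamma$, together with the uniform control on the conjugating maps $h_\eta, h_{\eta'}$ from part (1). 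More precisely, one breaks $\alpha$ into the subarc between its first and last crossings with the $\ti\gamma_i$ plus two ``tails'', pushes each piece forward by $g_{x',y'}$, and uses Lemma~\ref{lem: geodesic seg} to bound the number of extra crossings each piece can acquire by a constant depending only on $g$; summing gives $K(g)$.

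The main obstacle will be the non-compactness of the parameter space of pairs $\{x',y'\}$ — making sure the constants really are independent of $\{x',y'\}$ rather than merely finite for each pair. This is precisely why Lemma~\ref{lem: geodesic seg} is stated with a constant depending only on $g$ (not on the geodesic arcs $\eta,\eta'$): it is designed to feed exactly this uniformity into the conjugates $g_{x',y'} = h_{\eta'}^{-1}\circ g\circ h_\eta$. The delicate bookkeeping is in verifying that conjugating by the varying maps $h_\eta$ and $h_{\eta'}$ — which are \emph{not} uniformly bounded in the $C^1$ sense — nonetheless changes $L_{\beta,\gamma}$ only by a bounded amount, and for this the homotopy-theoretic (intersection-number) description of $L_{\beta,\gamma}$, rather than the metric one, is essential.
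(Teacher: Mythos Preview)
The paper does not actually prove this lemma: after stating Proposition~\ref{prop:spread} it says ``This proposition is proved via three lemmas which we now state.  For proofs see \cite{FH3},'' and then lists Lemmas~\ref{lem: geodesic seg}, \ref{lem: gamma-len}, and \ref{T-len growth} without argument.  So there is no in-paper proof to compare against; your proposal has to be judged on its own.

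Your overall architecture---reduce to Lemma~\ref{lem: geodesic seg} via the identity $g_{x',y'}(\gamma)=h_{\eta'}^{-1}(g(\eta))$ and exploit that intersection data are topological---is the right one, but there is a concrete error in how you handle $h_{\eta'}$.  You write that $h_{\eta'}$ ``fixes $A$ setwise'' and that $\gamma'=\eta'$ has ``the same endpoints'' as $\gamma$; neither is true.  By construction $h_{\eta'}(x)=g(x')$ and $h_{\eta'}(y)=g(y')$, so $\eta'=h_{\eta'}(\gamma)$ has endpoints $g(x'),g(y')$, which in general are unrelated to $A=\{x,y\}$.  Consequently your appeal to the remark after Definition~\ref{defn:spread} (which requires $\gamma'$ to share endpoints with $\gamma$) is illegitimate, and the sentence ``its effect on the homotopy class rel $A$\ldots'' does not make sense for a map that moves $A$.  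The correct way to use $h_{\eta'}^{-1}$ is purely through invariance of \emph{algebraic intersection numbers of subsegments} under a diffeomorphism: since $h_{\eta'}^{-1}$ carries the pair $(g(\eta),\eta')$ to the pair $(g_{x',y'}(\gamma),\gamma)$, the bound from Lemma~\ref{lem: geodesic seg} on subsegments of $g(\eta)$ against $\eta'$ transfers verbatim to subsegments of $g_{x',y'}(\gamma)$ against $\gamma$.  No control on $h_{\eta'}$ beyond its being a diffeomorphism is needed at this step.

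What remains---and what you gloss over in both parts---is the passage from ``bounded algebraic intersection of every subarc of $\alpha$ with $\gamma$'' to ``bounded $L_{\beta,\gamma}(\alpha)$.''  Your opening claim that $L_{\beta,\gamma}(\alpha)$ is, up to $\pm 2$, a maximal algebraic intersection number is essentially right but needs an argument: a lift $\tilde\alpha$ crossing $\tilde\gamma_{a+1},\dots,\tilde\gamma_{b-1}$ yields a subarc whose algebraic intersection with (the full preimage of) $\gamma$ is at least $b-a-2$, because the $\tilde\gamma_i$ separate and are crossed with consistent sign, while any other lift of $\gamma$ that the subarc meets is crossed an even number of times algebraically (it separates $H$, and both endpoints of the subarc lie on the same side).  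Once that lemma is in hand, part~(1) follows directly from the previous paragraph, and part~(2) follows by the same mechanism applied to $g_{x',y'}(\alpha)=h_{\eta'}^{-1}(g(h_\eta(\alpha)))$: one compares intersections of subarcs of $h_\eta(\alpha)$ with $\eta$ to those of $g(h_\eta(\alpha))$ with $\eta'$, and Lemma~\ref{lem: geodesic seg} bounds the discrepancy by a constant depending only on $g$.  Your ``break into tails and push forward'' sketch for~(2) is in the right spirit but, as written, does not explain why the constant is independent of $x',y'$.
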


\begin{lem} \label{T-len growth}  
Suppose that $g_i \in \Diff(S)_0,\  1 \le i\le k,$ that $f$  is in the group they generate and that   
  $|f^n|$ is the word length of $f^n$ in the generators $\{g_i\}$. Then there is a constant $C >0$ such that
$$
L_{\beta,\gamma}(f^n (\alpha)) \le L_{\beta,\gamma}(\alpha) + C |f^n| 
$$ 
for all $\alpha,\beta,\gamma$ and all $n >0.$
\end{lem}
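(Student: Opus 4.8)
The plan is to reduce the statement for an arbitrary power $f^n$ to the uniform estimates already recorded in Lemma~\ref{lem: gamma-len}, by writing $f^n$ as a word of length $|f^n|$ in the generators $g_i$ and applying those estimates one letter at a time. The subtlety that must be handled carefully is that the intermediate homeomorphisms obtained by reading off prefixes of the word $f^n$ do \emph{not} fix the endpoint set $A$ of $\gamma$; Lemma~\ref{lem: gamma-len} is designed precisely to absorb this by conjugating each generator $g$ by the canonical maps $h_\eta$ so that the resulting maps $g_{x',y'}$ do fix $A$ and admit the uniform bounds $C(g)$ and $K(g)$.

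Here is the order of the steps. First I would fix the word $f^n = w_\ell \cdots w_2 w_1$ with $\ell = |f^n|$ and each $w_t \in \{g_i^{\pm 1}\}$, and set $u_t = w_t w_{t-1} \cdots w_1$ (so $u_0 = \mathrm{id}$, $u_\ell = f^n$). Let $x'_t = u_t(x)$ and $y'_t = u_t(y)$ be the images of the endpoints of $\gamma$ under the partial words. Using the notation of Lemma~\ref{lem: gamma-len}, write $\alpha_0 = h_{\eta(x,y)}^{-1}(\alpha) = \alpha$ (since $\eta(x,y) = \gamma$ and $h = \mathrm{id}$ there) and more generally track the arc $\alpha_t := h_{\eta(x'_t,y'_t)}^{-1}\circ u_t(\alpha)$, which is an arc with endpoints in $A$. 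The point of this normalization is the telescoping identity
\[
\alpha_t = (w_t)_{x'_{t-1},y'_{t-1}}(\alpha_{t-1}),
\]
which holds because $(w_t)_{x'_{t-1},y'_{t-1}} = h_{\eta(x'_t,y'_t)}^{-1}\circ w_t \circ h_{\eta(x'_{t-1},y'_{t-1})}$ by the very definition of the subscripted maps. Next I would apply part (2) of Lemma~\ref{lem: gamma-len} to each factor $(w_t)_{x'_{t-1},y'_{t-1}}$ (which fixes $A$ pointwise, as required), obtaining
\[
L_{\beta,\gamma}(\alpha_t)\ \le\ L_{\beta,\gamma}(\alpha_{t-1}) + K(w_t)\ \le\ L_{\beta,\gamma}(\alpha_{t-1}) + K_0,
\]
where $K_0 = \max_i \max\{K(g_i),K(g_i^{-1})\}$ is a constant depending only on the generating set. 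Iterating over $t = 1,\dots,\ell$ gives $L_{\beta,\gamma}(\alpha_\ell) \le L_{\beta,\gamma}(\alpha) + K_0 \ell = L_{\beta,\gamma}(\alpha) + K_0 |f^n|$.

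Finally I would translate this back to a statement about $f^n(\alpha)$ itself rather than the normalized arc $\alpha_\ell = h_{\eta(x'_\ell,y'_\ell)}^{-1}\circ f^n(\alpha)$. Since $f^n$ is a single fixed diffeomorphism (not depending on $n$ through the generating set in the way the word does — rather, $x'_\ell = f^n(x)$, $y'_\ell = f^n(y)$, and $h := h_{\eta(f^n(x),f^n(y))} \in \Diff(S)_0$ fixes $A$ after one further normalization), the discrepancy $|L_{\beta,\gamma}(f^n(\alpha)) - L_{\beta,\gamma}(\alpha_\ell)|$ is controlled: applying part (2) of Lemma~\ref{lem: gamma-len} once more to the map $h^{-1}$ conjugated appropriately (or, more cleanly, observing that $h$ carries $\gamma$ to a bounded-length geodesic arc and invoking Lemma~\ref{lem: geodesic seg} to bound the change in intersection number by a constant $C_1$ independent of everything) yields $L_{\beta,\gamma}(f^n(\alpha)) \le L_{\beta,\gamma}(\alpha_\ell) + C_1$. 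Combining, $L_{\beta,\gamma}(f^n(\alpha)) \le L_{\beta,\gamma}(\alpha) + K_0|f^n| + C_1 \le L_{\beta,\gamma}(\alpha) + C|f^n|$ for $C = K_0 + C_1$ (using $|f^n|\ge 1$ for $n>0$), which is the claim.

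The main obstacle is the bookkeeping in the middle step: one must be sure that the conjugated generators $(w_t)_{x'_{t-1},y'_{t-1}}$ really do fix $A$ pointwise so that Lemma~\ref{lem: gamma-len}(2) applies, and that the additive constants genuinely depend only on the finite generating set and not on $t$, $n$, $\alpha$, $\beta$, or the wandering basepoints $x'_t, y'_t$ — this is exactly the ``uniform behavior over a non-compact space'' that Lemma~\ref{lem: gamma-len} was stated to provide, so the work is in correctly invoking it along the telescoping factorization rather than in any new geometric estimate. Part (1) of Lemma~\ref{lem: gamma-len} is not needed for this lemma (it will be used when one later feeds a generator applied to $\gamma$ itself into the estimate), but it reassures us that the ``seed'' term is bounded.
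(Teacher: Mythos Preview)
The paper does not actually prove this lemma: immediately after stating Proposition~\ref{prop:spread} it says the three supporting lemmas are proved in \cite{FH3}, and only the short deduction of Proposition~\ref{prop:spread} from Lemma~\ref{T-len growth} is given here. So there is no in-paper proof to compare against directly.

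That said, your approach---writing $f^n$ as a minimal word, telescoping via the conjugated generators $(w_t)_{x'_{t-1},y'_{t-1}}$, and applying Lemma~\ref{lem: gamma-len}(2) once per letter to pick up an additive $K_0$ at each step---is exactly what the sequence of lemmas is set up to deliver, and your telescoping identity $\alpha_t=(w_t)_{x'_{t-1},y'_{t-1}}(\alpha_{t-1})$ is correct. One simplification you overlooked: in the standing setup for spread the endpoint set $A=\{x,y\}$ of $\gamma$ lies in $\Fix(f)$, so $x'_\ell=f^n(x)=x$ and $y'_\ell=f^n(y)=y$, and by the paper's convention (``If $x=x'$ and $y=y'$ we choose $\eta=\gamma$ and $h_\eta=\mathrm{id}$'') we get $h_{\eta(x'_\ell,y'_\ell)}=\mathrm{id}$ and hence $\alpha_\ell=f^n(\alpha)$ on the nose. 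Your ``final step'' is therefore unnecessary---which is fortunate, because as written it is the one genuinely shaky part: the map $h=h_{\eta(f^n(x),f^n(y))}$ would depend on $n$, and neither Lemma~\ref{lem: gamma-len}(2) (whose constant $K(g)$ is for a \emph{fixed} $g$) nor Lemma~\ref{lem: geodesic seg} immediately gives a bound $C_1$ uniform in $n$ without more argument. With $A\subset\Fix(f)$ the proof ends cleanly at $L_{\beta,\gamma}(f^n(\alpha))\le L_{\beta,\gamma}(\alpha)+K_0\,|f^n|$, i.e.\ $C=K_0$.
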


\noindent{\bf Proof of Proposition~\ref{prop:spread}}
Since $f$ is distorted in $G$
\[
\liminf_{n \to \infty} \frac{|f^n|}{n} = 0.
\]
According to the definition of spread and 
Lemma~\ref{T-len growth}
we then have
\[
\sigma_{f,\beta,\gamma}(\alpha) 
=  \liminf_{n \to \infty} \frac{L_{\beta,\gamma}( f^n(\alpha))
}{n}
\le \liminf_{n \to \infty} \frac{L_{\beta,\gamma}(\alpha) + C |f^n|}{n}
= 0.
\]\qed

\section{Sketch of Theorem~\ref{thm:distort}}

The following proposition is implicit in the paper of Atkinson \cite{A}.
This proof is taken from \cite{FH1} but is essentially the same as an
argument in \cite{A}.

\begin{prop}\label{prop: atkinson}
Suppose $T: X \to X$ is an ergodic automorphism of a probability space
$(X,\nu)$ and let $\phi: X \to \R$ be an integrable function with
$\int \phi \ d\nu = 0.$ Let $S(n,x) = \sum_{i=0}^{n-1} \phi( T^i(x))$.
Then for any $\varepsilon >0$ the set of $x$ such that $|S(n,x)| <
\varepsilon$ for infinitely many $n$ is a full measure subset of $X$.
\end{prop}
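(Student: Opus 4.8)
The plan is to encode the cocycle $S(n,x)$ as an orbit of a skew product and deduce the statement from a recurrence property. Define $F:X\times\R\to X\times\R$ by $F(x,t)=(Tx,\,t+\phi(x))$. This is an invertible transformation preserving the $\sigma$-finite measure $m=\nu\times\lambda$, where $\lambda$ is Lebesgue measure on $\R$, and one checks by induction that $F^n(x,t)=(T^n x,\,t+S(n,x))$. Fix $\varepsilon>0$ and set $E=X\times(-\varepsilon/2,\varepsilon/2)$, so that $m(E)=\varepsilon\in(0,\infty)$. If $(x,t)\in E$ and $F^n(x,t)\in E$, then $t$ and $t+S(n,x)$ both lie in $(-\varepsilon/2,\varepsilon/2)$, hence $|S(n,x)|<\varepsilon$. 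So if we can show that $m$-almost every point of $E$ returns to $E$ infinitely often under $F$, then a Fubini argument finishes the proof: letting $B$ be the set of $x$ for which $|S(n,x)|<\varepsilon$ holds only finitely often, the slab $B\times(-\varepsilon/2,\varepsilon/2)$ consists entirely of points of $E$ that are not infinitely recurrent (the defining property of $B$ does not involve $t$), so it is $m$-null, whence $\nu(B)\cdot\varepsilon=0$ and $\nu(B)=0$.

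The heart of the matter is therefore to prove that $F$ is \emph{conservative}, that is, has no wandering set of positive $m$-measure; the classical Halmos recurrence theorem then yields the infinite recurrence to $E$ used above. This is the only place where the hypotheses enter. Suppose, for contradiction, that $W$ is a wandering set with $m(W)>0$. By Birkhoff's pointwise ergodic theorem, using the ergodicity of $T$ and $\int\phi\,d\nu=0$, the set $Y_0=\{x:\ S(n,x)/n\to 0\}$ has $\nu(Y_0)=1$; intersecting $W$ with $Y_0\times\R$ (which leaves $m(W)$ unchanged) and then with $X\times[-R,R]$ for a suitable $R$, we may assume $W$ has finite positive measure $c$ and is contained in $Y_0\times[-R,R]$. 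Fix $\delta>0$ with $\delta<c/4$. By Egorov's theorem there are $N_0$ and a set $Y\subset Y_0$ with $\nu(Y)$ so close to $1$ that $W':=W\cap(Y\times[-R,R])$ has $m(W')>c/2$, and with $|S(n,x)|\le\delta n$ for all $x\in Y$ and all $n\ge N_0$. Then for $N_0\le n\le N$ we have $F^n(W')\subset X\times[-R-\delta N,\,R+\delta N]$, a set of $m$-measure $2R+2\delta N$. But the sets $F^n(W')$ for $N_0\le n<N$ are pairwise disjoint and each has measure $m(W')>c/2$, so $(N-N_0)\,c/2\le 2R+2\delta N$; dividing by $N$ and letting $N\to\infty$ gives $c/2\le 2\delta<c/2$, a contradiction. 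Hence $F$ is conservative.

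The remaining steps are routine. The Halmos recurrence theorem for conservative $\sigma$-finite measure-preserving transformations gives that $m$-a.e.\ point of $E$ returns to $E$ under $F$ infinitely often; taking $E=X\times(-\varepsilon/2,\varepsilon/2)$ and reading off $|S(n,x)|<\varepsilon$ as above, then applying Fubini as in the first paragraph, we conclude that for $\nu$-a.e.\ $x$ the inequality $|S(n,x)|<\varepsilon$ holds for infinitely many $n$. (Intersecting over $\varepsilon=1/k$ recovers the familiar form $\liminf_n|S(n,x)|=0$ for a.e.\ $x$.) The one genuinely substantive point is the conservativity argument: the cocycle $S(n,x)$ is generally unbounded, and the role of the zero-mean hypothesis is precisely to force the sublinear bound $S(n,x)=o(n)$, which makes the growing strip $X\times[-R-\delta N,\,R+\delta N]$ have $m$-measure $o(N)$ and hence too small to contain the roughly $N$ disjoint copies $F^n(W')$ of a fixed positive-measure set.
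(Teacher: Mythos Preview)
Your proof is correct and takes a genuinely different route from the paper's. The paper argues directly in $X$: assuming the bad set $A=\{x:|S(n,x)|<\varepsilon$ only finitely often$\}$ has positive measure, it picks $A_N\subset A$ with $\nu(A_N)>p>0$, applies Birkhoff's theorem to $\X_{A_N}$ to get many orbit visits $T^i(x)\in A_N$, and then observes that the cocycle identity $S(j,x)-S(i,x)=S(j-i,T^i x)$ forces the values $\{S(i,x):T^i(x)\in A_N,\,i\le n\}$ to be $\varepsilon/N$-separated on average, so their range grows linearly in $n$, contradicting $\lim S(n,x)/n=\int\phi\,d\nu=0$. Your argument instead lifts to the skew product $F$ on $X\times\R$, reduces the statement to conservativity of $F$, and proves conservativity by an Egorov/volume-count: a wandering piece would produce $\sim N$ disjoint translates trapped in a slab of measure $O(R+\delta N)$, which is impossible once $\delta$ is chosen small. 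What each buys: the paper's approach is entirely self-contained from Birkhoff's theorem and avoids any appeal to Egorov or the Halmos recurrence theorem for $\sigma$-finite systems; your approach is more structural, isolating conservativity of the cocycle extension as the real content, and makes transparent why the zero-mean hypothesis is exactly what is needed (it supplies the sublinear drift $S(n,x)=o(n)$ that collapses the slab). Both are standard proofs of Atkinson's theorem in the literature.
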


\begin{proof}
Let $A$ denote the set of $x$ such that $|S(n,x)| < \varepsilon$ for
only finitely many $n$.  We will show the assumption $\mu(A) > 0$
leads to a contradiction.  Suppose $\mu(A) > 0$ and let $A_m$ denote
the subset of $A$ such that $|S(i,x)| < \varepsilon$ for $m$ or fewer
values of $i$.  Then $A = \cup A_m$ and there is an $N >0$ such that
$\mu(A_N) > p$ for some $p >0.$

The ergodic theorem applied to the characteristic function of $A_N$
implies that for almost all $x$ and all sufficiently large $n$ (depending
on $x$) we have 
\[
\frac{card( A_N \cap \{T^i(x)\ |\ 0 \le i < n\})}{n} > p.
\]

We now fix an $x \in A_N$ with this property.
Let $B_n = \{i\ |\ 0 \le i \le n 
\text{ and } T^i(x) \in A_N\}$ and $r = card(B_n)$; then $r > np$.
Any interval in $\R$ of length $\varepsilon$ which 
contains $S(i,x)$ for some $i \in B_n$  contains at most $N$ values of
$\{S(j,x) : j > i\}.$
Hence any interval of length $\varepsilon$ contains at most 
$N$ elements of $\{ S(i,x)\ |\ i\in B_n\}.$
Consequently an interval containing 
the $r$ numbers $\{ S(i,x)\ |\ i \in B_n\}$ must have length at least
$r\varepsilon/N$.  Since $r > np$ this length is $> np\varepsilon/N.$
Therefore
\[
\sup_{0 \le i \le n} |S(i,x)| > \frac{np\varepsilon}{2N},
\]
and hence by the ergodic theorem, for almost all $x \in A_N$ 
\[
\Big | \int \phi\ d\mu \Big |
= \lim_{n \to \infty} \frac{|S(n,x)|}{n} 
= \limsup_{n \to \infty} \frac{|S(n,x)|}{n}
> \frac{p\varepsilon}{2N} > 0.
\]
This contradicts the hypothesis so our result is proved.
\end{proof}

\begin{cor}\label{cor: atkinson}
Suppose $T: X \to X$ is an automorphism of a Borel probability space
$(X,\mu)$ and $\phi: X \to \R$ is an integrable function.
Let $S(n,x) = \sum_{i=0}^{n-1} \phi( T^i(x))$ and suppose 
$\mu(P) > 0$ where $P = \{x \ |\ \lim_{n \to \infty} S(n,x) = \infty\}.$
Let 
\[
\hat \phi(x) = \lim_{n \to \infty} \frac{S(n,x)}{n}.
\]
Then $\int_P \hat \phi \ d\mu > 0.$  In particular $\hat \phi(x) >0$ for a set
of positive $\mu$-measure.
\end{cor}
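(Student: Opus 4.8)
The plan is to reduce the statement to Proposition~\ref{prop: atkinson} by decomposing $X$ into ergodic components and, on each component, ruling out the bad behavior that would contradict Atkinson. First I would dispose of the trivial reductions: by replacing $\phi$ with $\phi - c$ for a suitable constant $c$ we cannot assume $\int \phi\, d\mu = 0$ in general (the hypothesis is precisely that the sums diverge on a positive-measure set $P$), so instead I would work directly with the ergodic decomposition $\mu = \int \mu_\theta\, d\tau(\theta)$ of $\mu$ into $T$-invariant ergodic measures $\mu_\theta$. By the Birkhoff ergodic theorem, for $\mu_\theta$-a.e.\ $x$ the limit $\hat\phi(x) = \lim_n S(n,x)/n$ exists and equals $\int \phi\, d\mu_\theta =: c_\theta$, a constant depending only on $\theta$. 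So $\hat\phi$ is (a.e.) the function $x \mapsto c_{\theta(x)}$, and the integrability of $\hat\phi$ over $P$ is not an issue since $|\hat\phi| \le \int |\phi|\, d\mu_\theta$ fibrewise and $\phi$ is integrable.

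Next I would analyze the set $P$ componentwise. Fix an ergodic component $\mu_\theta$. If $c_\theta < 0$, then for $\mu_\theta$-a.e.\ $x$ we have $S(n,x)/n \to c_\theta < 0$, so $S(n,x) \to -\infty$ and in particular $x \notin P$; thus $\mu_\theta(P) = 0$. If $c_\theta = 0$, then $\phi - c_\theta = \phi$ has integral zero with respect to the ergodic measure $\mu_\theta$, so Proposition~\ref{prop: atkinson} applies: for every $\varepsilon > 0$, $\mu_\theta$-a.e.\ $x$ satisfies $|S(n,x)| < \varepsilon$ for infinitely many $n$, hence $S(n,x) \not\to \infty$, hence $x \notin P$; again $\mu_\theta(P) = 0$. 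The only surviving case is $c_\theta > 0$. Integrating $\mu_\theta(P)$ against $d\tau(\theta)$ and using $\mu(P) = \int \mu_\theta(P)\, d\tau(\theta) > 0$, we conclude that the set $\Theta_+ := \{\theta : c_\theta > 0\}$ has positive $\tau$-measure, and in fact $\mu_\theta(P) > 0$ forces $\theta \in \Theta_+$.

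Finally I would assemble the conclusion. We have shown $P$ is (up to null sets) contained in $\bigcup_{\theta \in \Theta_+} \operatorname{supp}\mu_\theta$, and on the preimage of $\Theta_+$ the function $\hat\phi$ equals $c_\theta > 0$. Therefore
\[
\int_P \hat\phi\, d\mu = \int_{\Theta_+} \Big( \int_P \hat\phi\, d\mu_\theta \Big) d\tau(\theta) = \int_{\Theta_+} c_\theta\, \mu_\theta(P)\, d\tau(\theta) > 0,
\]
since the integrand is strictly positive on a set of positive $\tau$-measure (any $\theta$ with $\mu_\theta(P) > 0$ lies in $\Theta_+$, and such $\theta$ exist by positivity of $\mu(P)$). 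In particular $\hat\phi > 0$ on the positive-measure set $P \cap (\text{preimage of } \Theta_+)$, which is all of $P$ up to a null set. The main obstacle I anticipate is purely technical: justifying that the ergodic decomposition behaves well enough that $\hat\phi(x) = c_{\theta(x)}$ a.e.\ and that the conditional integrals $\int_P \hat\phi\, d\mu_\theta$ splice together measurably — this is standard (it follows from the measurability of the ergodic decomposition map and Fubini for the disintegration), but it is the one place where some care is needed rather than a one-line invocation of Birkhoff and Atkinson.
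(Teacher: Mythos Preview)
Your proof is correct and follows essentially the same approach as the paper's: decompose $\mu$ into ergodic components, apply Proposition~\ref{prop: atkinson} on each component to rule out the zero-mean case, and conclude that the components supporting $P$ must have strictly positive mean. The only cosmetic difference is that the paper first replaces $\phi$ by $\phi\cdot\X_P$ (using that $P$ is $T$-invariant) so that $\hat\phi\ge 0$ automatically, which collapses your trichotomy $c_\theta<0,\ =0,\ >0$ into the single dichotomy $I(\phi,\nu)=0$ versus $I(\phi,\nu)>0$.
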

\begin{proof}
By the ergodic decomposition theorem there is a measure $m$ on the
space $\M$ of all $T$ invariant ergodic Borel measures on $X$ 
with the property that for any $\mu$ integrable function 
$\psi : X \to \R$ we have 
$\int \psi \ d\mu = \int_\M I(\psi,\nu) \ dm$ where $\nu \in \M$
and $I(\psi,\nu) = \int \psi\ d\nu.$

The set $P$ is $T$ invariant. Replacing $\phi(x)$ with 
$\phi(x) \X_P(x),$ where $\X_P$ is the characteristic function
of $P,$ we may assume that $\phi$ vanishes outside $P$.
Then clearly $\hat \phi(x) \ge 0$ for all $x$ for which it exists.
Let $\M_P$ denote $\{ \nu \in \M\ |\ \nu(P) > 0 \}$.  
If $\nu \in \M_P$ the fact that $\hat \phi(x) \ge 0$ and the ergodic theorem
imply that $I(\phi,\nu) = \int \phi\ d\nu  = \int \hat \phi\ d\nu  \ge 0$. 
Also Proposition~\ref{prop: atkinson}
implies that $\int \phi\ d\nu  = 0$ is impossible so
$I(\phi,\nu) > 0.$
Then $\mu(P) =  \int I( \X_P, \nu)\ dm = \int \nu(P)\ dm = 
\int_{\M_P} \nu(P)\ dm.$   This implies $m(\M_P) > 0$ since
$\mu(P) > 0.$

Hence
\[
\int \hat \phi \ d\mu = \int \phi \ d\mu = \int I( \phi, \nu)\ dm \ge \int_{\M_P} I( \phi, \nu)\ dm
>0
\]
since $I( \phi, \nu) >0$ for $\nu \in \M_P$ and $m(\M_P) > 0.$
\end{proof}

\noindent
{\bf Outline of the proof of Theorem~\ref{thm:distort}}

We must show that if $f\in \Diff_\mu(S)_0$ has infinite order and
$\mu(S\setminus \Fix(f)) > 0$ then $f$ is not distorted.  In light
of the results of the previous section this will follow from the
following proposition.

\begin{prop}
If $f\in \Diff_\mu(S)_0$ has infinite order and
$\mu(S\setminus \Fix(f)) > 0$ then 
one of the following holds:
\begin{enumerate}
\item There exists a closed curve $\tau$ such that 
$\egr(f, \tau) > 0.$
\item $f$ has linear displacement.
\item After replacing $f$ with some iterate $g=f^k$ and perhaps
passing to a two-fold covering $g:S \to S$ is isotopic to the identity
and there exist $\alpha, \beta, \gamma$ such that
the spread $\sigma_{f,\beta,\gamma}(\alpha) >0.$
\end{enumerate}
\end{prop}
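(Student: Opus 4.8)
The plan is to reduce to a single recurrent orbit off $\Fix(f)$, split according to a homological rotation vector, and then feed Corollary~\ref{cor: atkinson} into the Thurston--Nielsen and Brouwer theory of $f$. First I would pass to an ergodic model: since $\Fix(f)$ is $f$-invariant and $\mu(S\setminus\Fix(f))>0$, the ergodic decomposition of $\mu$ has a component $\nu$ with $\nu(\Fix(f))=0$, and I fix a $\nu$-generic point $x$, which is then recurrent with $x\notin\Fix(f)$. I also want a finite $f$-invariant set $W\subset\Fix(f)$ to work relative to. If $S$ has genus at least two the Lefschetz number of $f$ equals $\chi(S)\ne 0$, so $f$ and all of its iterates have fixed points and such a $W$ exists; after replacing $f$ by an iterate $g=f^k$ and, if necessary, passing to a double cover, one further arranges that $\# W\ge 2$ (at least three when $S=S^2$) and that the pieces of the Thurston--Nielsen decomposition used below are isotopic to the identity, which is the source of the ``iterate and double cover'' hedge in conclusion (3). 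On $T^2$ and $S^2$ the existence of enough fixed points of some iterate is not automatic: there I would first use Poincar\'e recurrence and the Brouwer plane-translation theorem to show that an $f$ with too few fixed (and low-period periodic) points forces $\mu$ to be carried by $\Fix(f)$, against the hypothesis.

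\noindent\textbf{The rotation-vector dichotomy.}
Let $\ti f$ be the identity lift of $f$ to the universal cover ($H$ in genus at least two, $\R^2$ for $T^2$); it exists because the isotopy to the identity lifts. For a closed $1$-form $\lambda$ on $S$ set $\tilde\lambda(y)=\int_{t\mapsto f_t(y)}\lambda$, the integral along the isotopy track; this is an $\R$-valued cocycle over $f$, and $[\lambda]\mapsto\int\tilde\lambda\,d\nu$ is the homological rotation vector $\rho(\nu)\in H_1(S;\R)$, which vanishes identically when $S=S^2$. If $\rho(\nu)\ne 0$, pick $\lambda$ with $\int\tilde\lambda\,d\nu\ne 0$; by the Birkhoff ergodic theorem $\sum_{i=0}^{n-1}\tilde\lambda(f^i x)$ grows linearly in $n$, and since this sum agrees up to a bounded error with the $[\lambda]$-component of the displacement of a lift of $x$ under $\ti f^n$, we get $\ti d(\ti f^n\ti x,\ti x)\ge cn$ for some $c>0$, so $f$ has linear displacement, which is conclusion (2). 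When $S=T^2$, if the $\mu$-rotation set is nondegenerate I would instead produce two lifts whose $\ti f^n$-images separate linearly (again conclusion (2)), and if that rotation set is a single point $v$ I would first compose $f$ with the translation by $-v$, which changes none of $\egr$, linear displacement, or spread, reducing to $\rho(\nu)=0$. From here on I assume $\rho(\nu)=0$ for every ergodic component supported off $\Fix(f)$.

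\noindent\textbf{The rotationless case.}
Now Proposition~\ref{prop: atkinson} and Corollary~\ref{cor: atkinson} become available: each cocycle $\tilde\lambda$ has zero $\nu$-average, so its Birkhoff sums along the orbit of $x$ do not escape, and more generally any cocycle whose Birkhoff sums along $x$ tend to $+\infty$ must have positive $\nu$-average. I would then take the Thurston--Nielsen canonical form of $f$, equivalently of $f$ blown up along $W$, relative to $W$. If some piece is pseudo-Anosov, a closed curve $\tau$ carried by that piece has $l_S(f^n(\tau))$ growing exponentially in $n$, so $\egr(f,\tau)>0$, which is conclusion (1). If no piece is pseudo-Anosov, then $f$ is isotopic to the identity relative to $W$; choosing an embedded arc $\alpha$ through $x$, an embedded arc $\gamma$ joining two points of $W$, and a simple closed curve $\beta$ crossing $\gamma$ once, the number $L_{\beta,\gamma}(f^n\circ\alpha)$ records how many times the $n$-th orbit arc of $x$ winds around $\gamma$ in the $\beta$-direction, so $\sigma_{f,\beta,\gamma}(\alpha)$ is the linking rate of the $\nu$-generic orbit with $\gamma$. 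If this linking rate is positive for some admissible $W,\beta,\gamma$, equivalently if the associated linking cocycle has positive $\nu$-average (which Corollary~\ref{cor: atkinson} lets me detect from a single orbit whose winding is unbounded), then conclusion (3) holds. The only surviving possibility is that all homological rotation vectors, all linking rates with pairs of fixed points, and all pseudo-Anosov pieces vanish, and I would rule this out by Brouwer theory: a recurrent orbit that is unlinked from every pair of fixed points and has zero rotation in every annular direction, for a map isotopic to the identity, must lie in $\Fix(f)$, contradicting $x\notin\Fix(f)$. On $S^2$ this contradiction is exactly where three fixed points of $f^n$ and the passage to an iterate are needed, and the double cover serves to orient the invariant foliations of a pseudo-Anosov piece or to promote a half-twist to an honest twist.

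\noindent\textbf{Main obstacle.}
The technical heart is this last step: converting the non-escape of Birkhoff sums supplied by Corollary~\ref{cor: atkinson} into a Brouwer-type statement that a recurrent, everywhere-unlinked orbit of an isotopic-to-the-identity map must be fixed, and, when the genus is at least two, controlling the \emph{non-abelian} part of the deck-group drift of $\ti f^n(\ti x)$ in $H$, which the homological cocycles do not see, so as either to locate a fixed point of $\ti f$ or to force linear displacement. Alongside this is the bookkeeping that dictates the passage to an iterate and to a double cover, the $S^2$-specific need for three fixed points, and the verification that a positive winding rate literally means $\sigma_{f,\beta,\gamma}(\alpha)>0$ via the estimates behind Lemma~\ref{T-len growth} read in reverse. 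By contrast, the rotation-vector branch, the linear-displacement estimate, and the identification of a pseudo-Anosov piece with exponential curve growth are routine given the machinery already in place.
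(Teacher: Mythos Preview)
Your outline has the right ingredients but two genuine gaps. First, the implication ``no pseudo-Anosov piece $\Rightarrow$ $f$ is isotopic to the identity rel $W$'' is false: even after passing to an iterate, the Thurston canonical form rel $W$ can be reducible with nontrivial Dehn twists along the reducing curves, and no power kills a Dehn twist. The paper treats this as a separate case and shows directly that a Dehn twist in the canonical form forces either linear displacement or positive spread; you would need an independent argument here, not just ``pass to an iterate''.

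Second, and more seriously, the step you flag as the ``main obstacle'' is where essentially all the content lies, and the Brouwer-type statement you invoke (``a recurrent orbit unlinked from every pair of fixed points and with zero rotation in every annular direction must be fixed'') is not a known theorem and is not how the paper proceeds. The paper's organizing dichotomy is different from yours: it first asks whether $f$ is isotopic to the identity rel the \emph{entire} set $\Fix(f)$. If not, one extracts a finite $P\subset\Fix(f)$ witnessing this, and the Thurston canonical form rel $P$ is genuinely nontrivial, yielding the pseudo-Anosov or Dehn-twist cases above. If $f$ \emph{is} isotopic to the identity rel $\Fix(f)$, the paper abandons the canonical-form approach entirely and instead analyzes the components of $M=S\setminus\Fix(f)$ (each $f$-invariant by Brown--Kister). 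On an annular component of positive measure one applies Corollary~\ref{cor: atkinson} to the displacement cocycle in the infinite cyclic cover to produce points of nonzero rotation number, hence positive spread; on a component of negative Euler characteristic one finds a simple closed geodesic and a positive-measure set whose lifts drift toward one of its ends, and a similar application of Corollary~\ref{cor: atkinson} gives positive spread. Your proposal never isolates the complementary components of $\Fix(f)$ or their covers, which is exactly the structure that makes the Atkinson argument bite; working only rel a finite $W$ and hoping a generic linking cocycle detects the drift does not by itself give you the arc $\gamma$ with endpoints in $\Fix(f)$ and the simple closed curve $\beta$ that the definition of spread requires.
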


The idea of the proof of this proposition is to first ask if 
$f$ is isotopic to the identity relative to $\Fix(f)$.  If not
there is a finite set $P \subset \Fix(f)$ such that $f$ is not
isotopic to the identity relative to $P$.  We then consider the
Thurston canonical form of $f$ relative to $P$.  If there is 
pseudo-Anosov component the property (1) holds.  If there are
no pseudo-Anosov components then there must be non-trivial
Dehn twists in the Thurston canonical form.  In this case it
can be shown that either (2) or (3) holds.  For details
see \cite{FH3}

We are left with the case that $f$ is isotopic to the identity
relative to $\Fix(f).$ There are several subcases.  It may be that $S$
has negative Euler characteristic and the identity lift $\ti f$ has a
point with non-zero rotation vector in which case (2) holds. It may be
that $S = T^2$ and there is a lift $\ti f$ with a fixed point and a
point with non-zero rotation vector in which case (2) again holds.

The remaining cases involve $M = S \setminus \Fix(f).$  A result
of Brown and Kister \cite{BK} implies that each component of $M$
is invariant under $f$.  If $M$ has a component which is an annulus
and which has positive measure then there is a positive measure set
in the universal cover of this component which goes to infinity in
one direction or the other.  In this case Corollary~\ref{cor: atkinson},
with $\phi$ the displacement by $\ti f$ in the covering space, 
implies there are points with non-zero rotation number.  Since points
on the boundary of the annulus have zero rotation number we can
conclude that (3) holds.

The remaining case is that there is a component of $M$ with positive
measure and negative Euler characteristic (we allow infinitely many
punctures).  In this case it can be shown that there is a simple closed
geodesic and a set of positive measure whose lift in the universal cover
of this component tends asymptotically to an end of the simple closed
geodesic.  An argument similar to the annular case then shows that
(3) holds.

More details can be found in \cite{FH3} including the fact that these
cases exhaust all possibilities.

\end{document}